\newtheorem{prop}{Proposition}[section]
\newtheorem{LM}{Lemma}[section]
\newtheorem{thm}{Theorem}[section]
\newtheorem{df}{Definition}[section]
\newtheorem{df-prop}{Definition-Proposition}[section]
\newtheorem{cor}{Corollary}[section]
\newcommand{\R}{\mathbb{R}}
\newcommand{\D}{\Delta}
\newcommand{\C}{\mathbb{C}}
\newcommand{\1}{\mathbf{1}}
\renewcommand{\d}{\delta}
\renewcommand{\o}{\mathfrak{O}}
\newcommand{\p}{\mathfrak{P}}
\newtheorem*{thm*}{Theorem}
\newtheorem*{thm**}{Theorem \ref{eqasai}}
\newtheorem*{thme**}{Theorem \ref{asaiprinc}}
\newtheoremstyle{pourlesremarques}{\topsep}{\topsep}{\normalfont}{}{\bfseries}{.}{ }{}
\theoremstyle{pourlesremarques}
\newtheorem{rem}{Remark}[section]
\def\adots{\mathinner{\mkern2mu\raise 1pt\hbox{.}\mkern 3mu\raise
4pt\hbox{.}\mkern1mu\raise 7pt\hbox{{.}}}}
\title {Unitary representations of GL(n,K) distinguished by a Galois involution, for K a p-adic field}
\author{Nadir Matringe}
\date{}
\begin{document}

 \maketitle

\begin{abstract}
Let $F$ be a $p$-adic field, and $K$ a quadratic extension of $F$. Using Tadic's 
classification of the unitary dual of $GL(n,K)$, 
we give the list of irreducible unitary representations of this group distinguished by $GL(n,F)$, in terms of distinguished discrete series. 
As it is known that a 
generalised Steinberg representation $St(\rho,k)$ is distinguished if and only if the cuspidal representation $\rho$ is $\eta^{k-1}$-distinguished, 
for $\eta$ the 
character of $F^*$ with kernel the norms of $K^*$, this actually gives a classification of distinguished 
unitary representations in terms of distinguished cuspidal representations.
\end{abstract}

\section*{Introduction}

 In the present work, we study, for $F$ a $p$-adic field, and $K$ a quadratic extension of $F$, 
smooth and complex unitary (which will be synonymous with unitarisable for us) representations of $GL(n,K)$ which admit on their space a 
nonzero invariant linear form under $GL(n,F)$. These unitary representations are called $GL(n,F)$-distinguished (or simply distinguished), and are conjectured to be the unitary part of the image of a functorial lift, in the Langlands' program, from $U(n,K/F)$ to $GL(n,K)$. 
\par Distinguished generic representations of $GL(n,K)$ have been classified in \cite{M11}, in terms of distinguished quasi-discrete series, using Zelevinsky's classification 
of generic representations. Here we do the same for distinguished irreducible unitary representations, using Tadic's classification 
of irreducible unitary representations. Our main result (Theorem \ref{unitdist}) is similar to the main result of \cite{M11}. 
However, to extend the result from generic unitary to 
irreducible unitary representations, we use different techniques. Our main tools are the Bernstein-Zelevinsky derivative functors, we apply ideas from 
\cite{B84}. For instance, the building blocks for unitary representations (the so called Speh representations) are not parabolically induced, 
hence one needs new methods to deal with these representations. That is what we do in Section \ref{sectionspeh}, to obtain a definitive statement in 
Corollary \ref{spehdist}, which we state here as a theorem.
\begin{thm*}
Let $k$ and $m$ be two positive integers, let $n$ be equal to $km$. If $\D$ is a discrete series of $GL(m,K)$, and $u(\D,k)$ is the corresponding 
Speh representation of $GL(n,K)$ (see Definition \ref{Speh}), then $u(\D,k)$ is distinguished if and only if $\D$ is.  
\end{thm*}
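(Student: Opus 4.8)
The plan is to prove the two implications separately, in each case reducing to the corresponding statement about the discrete series $\D$ by an induction on $k$, with the Bernstein--Zelevinsky derivative functors providing the link. Throughout let $\nu=|\det|$, let $\sigma$ denote the nontrivial element of $\mathrm{Gal}(K/F)$, and recall that $u(\D,k)$ is simultaneously the unique irreducible quotient of the standard module $M_k:=\D\nu^{(k-1)/2}\times\D\nu^{(k-3)/2}\times\cdots\times\D\nu^{(1-k)/2}$ and the unique irreducible submodule of the reversed standard module $M_k^{\mathrm{rev}}:=\D\nu^{(1-k)/2}\times\cdots\times\D\nu^{(k-1)/2}$. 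In particular one has an embedding $u(\D,k)\hookrightarrow \D\nu^{(1-k)/2}\times u(\D,k-1)\nu^{1/2}$ and, dually, a surjection $\D\nu^{(k-1)/2}\times u(\D,k-1)\nu^{-1/2}\twoheadrightarrow u(\D,k)$.

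For the implication ``$u(\D,k)$ distinguished $\Rightarrow$ $\D$ distinguished'' I would start from the embedding above, apply Frobenius reciprocity, and decompose $\operatorname{Hom}_{GL_n(F)}(\D\nu^{(1-k)/2}\times u(\D,k-1)\nu^{1/2},\C)$ along the finitely many $GL_n(F)$-orbits on the relevant partial flag variety, following Blanc--Delorme. The contribution of the $F$-split (open) orbit is governed by the distinction of $\D$ and of $u(\D,k-1)$, which by the inductive hypothesis is what we want; the obstacle --- and the reason Speh representations genuinely require new arguments --- is that the remaining orbits contribute $\operatorname{Hom}$-spaces whose nonvanishing only demands the conjugate-self-duality $\D\cong\D^{\sigma\vee}$, a condition satisfied by strictly more discrete series than distinction is (those conjugate-self-dual discrete series that are $\eta^{k-1}$-distinguished rather than distinguished). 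To rule these out I would pass to the Bernstein--Zelevinsky derivatives: using the known description of the derivatives of a Speh representation one identifies the top nonvanishing shifted derivative of $u(\D,k)$ as an object built from $\D$ and from Speh representations attached to strictly smaller data, and since the derivative functors are exact one can relate the distinction of $u(\D,k)$ to that of the discrete series $\D$ occurring in its derivatives and feed this back into the induction on $k$, along the lines of \cite{B84}.

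For the converse, assume $\D$ is distinguished. Then $\D$ is conjugate-self-dual, so in $M_k^{\mathrm{rev}}$ the factors $\D\nu^{j}$ and $\D\nu^{-j}$ are mutually conjugate-dual and, when $k$ is odd, the central factor $\D\nu^{0}=\D$ is distinguished; a direct computation with the Mackey decomposition then shows that $M_k^{\mathrm{rev}}$ is $GL_n(F)$-distinguished, so there is a nonzero invariant form $\ell$ on it. It remains to show that $\ell$ may be chosen so as not to vanish on the submodule $u(\D,k)\subseteq M_k^{\mathrm{rev}}$, equivalently that some invariant form on $M_k^{\mathrm{rev}}$ descends to $u(\D,k)$. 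I would argue by contradiction: if every element of $\operatorname{Hom}_{GL_n(F)}(M_k^{\mathrm{rev}},\C)$ killed $u(\D,k)$, then the quotient $M_k^{\mathrm{rev}}/u(\D,k)$ would be distinguished, and applying the derivative functors to the short exact sequence $0\to u(\D,k)\to M_k^{\mathrm{rev}}\to M_k^{\mathrm{rev}}/u(\D,k)\to 0$ and comparing with the explicit derivatives of $u(\D,k)$ and of $M_k^{\mathrm{rev}}$ would contradict the inductive hypothesis, by the same derivative bookkeeping as in the first implication.

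The main difficulty is the one flagged in the introduction: because $u(\D,k)$ is a proper subquotient of $M_k$ rather than a parabolically induced representation, neither direction follows formally from the theory of distinction for induced representations, and the conjugate-self-dual but non-distinguished discrete series are precisely the cases in which the ``obvious'' invariant forms exist on the ambient standard module but must be shown \emph{not} to survive to the Langlands quotient. Controlling this hinges on the explicit derivative computations for Speh representations together with careful inductive bookkeeping of which derivatives remain distinguished; that is the technical content of Section~\ref{sectionspeh}.
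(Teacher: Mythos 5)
Your first implication is essentially the paper's: the paper proves (Proposition \ref{higherdistunitary}) that a distinguished irreducible \emph{unitary} representation has distinguished highest shifted derivative, and since $u(\D,k)^{[-]}=u(\D,k-1)$ (Proposition \ref{highderspeh}) one descends to $\D$ by induction. But note that the mechanism is not ``exactness of the derivative functors'' plus bookkeeping: it is Bernstein's unitarisability criterion (Proposition \ref{criterionbernstein}), which forces the central exponents of the lower shifted derivatives to be strictly positive, so that a $P_n^\sigma$-invariant form cannot live on any piece of the Bernstein--Zelevinsky filtration of $\pi|_{P_n}$ except the top one. Without stating this positivity argument your sketch of this direction has no teeth; the orbit decomposition of $\D\nu^{(1-k)/2}\times u(\D,k-1)\nu^{1/2}$ that you begin with plays no role in the paper and, as you yourself observe, cannot separate distinction from $\sigma$-self-duality.

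The converse is where there is a genuine gap. Your plan is: show $M_k^{\mathrm{rev}}$ is distinguished, then argue by contradiction that if every invariant form killed the irreducible submodule $u(\D,k)$, the quotient $M_k^{\mathrm{rev}}/u(\D,k)$ would be distinguished, which ``derivative bookkeeping'' would refute. This cannot work, because the distinction of $M_k^{\mathrm{rev}}/u(\D,k)$ is not contradictory --- in general it is \emph{true}. Take $\D=\rho$ cuspidal distinguished and $k=3$: the composition factors of $M_3^{\mathrm{rev}}$ other than $u(\rho,3)$ include $St(\rho,3)$ (indeed its unique irreducible quotient), and by Proposition \ref{discrdist} the representation $St(\rho,3)$ \emph{is} distinguished when $\rho$ is, since $\eta^{2}=\1$. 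So there exist invariant forms on $M_k^{\mathrm{rev}}$ vanishing on $u(\D,k)$, and knowing the quotient is distinguished yields no contradiction; to conclude you would need to control \emph{all} invariant forms on the standard module, i.e. exact multiplicities, or to prove non-distinction of the intermediate Langlands quotients --- but these subquotients are not unitary, so the entire derivative machinery of the paper (which rests on Proposition \ref{criterionbernstein}) does not apply to them, and your inductive hypothesis, which concerns $u(\D,l)$ for $l<k$, says nothing about them. The paper avoids this trap entirely: it works with the end of complementary series $\nu^{1/2}u(\D,k)\times\nu^{-1/2}u(\D,k)$, which by Theorem \ref{end} has length exactly two with \emph{unitary} factors $u(\D_-,k)\times u(\D_+,k)$ and $u(\D,k-1)\times u(\D,k+1)$; the first factor is ruled out by Corollary \ref{uniformproddist}, Theorem \ref{distgen} and the dichotomy of Corollary \ref{1sur2}, so the second is distinguished, and then the nontrivial transfer result Proposition \ref{crux} (resting on the multiplicity-one statement of Proposition \ref{crucial} and on Proposition \ref{GtoPspeh}) extracts distinction of $u(\D,k+1)$ from distinction of the product $u(\D,k-1)\times u(\D,k+1)$. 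Nothing in your proposal supplies a substitute for either of these two steps.
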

 One direction is given by 
the fact that if $\pi$ is a distinguished irreducible unitary representation, then it is also the case of its highest shifted 
derivative (see Proposition \ref{higherdistunitary}). The other direction is a nontrivial generalisation of the following simple observation: if $\rho$ 
is a distinguished cuspidal representation of 
$GL(n,K)$, then it is known that the parabolically induced representation $\nu^{1/2}\rho \times \nu^{-1/2}\rho$ is distinguished, and it is also known 
that its irreducible submodule $St(\rho,2)$ is not distinguished, hence its quotient $u(\rho,2)$, which is a Speh representation of $GL(2n,K)$, is 
distinguished. The case of general irreducible unitary representations of $GL(n,K)$ distinguished by $GL(n,F)$ is treated in 
Section \ref{sectiongeneral}. We obtain the main result of the paper in Theorem \ref{unitdist}. Denoting by $\sigma$ the nontrivial element 
of the Galois group of $K$ over $F$, by $\pi^\vee$ the smooth contragredient of a representation $\pi$ of $GL(n,K)$, and by $\pi^\sigma$ 
the representation $\pi\circ \sigma$, its statement is as follows.
\begin{thm*}
Let $n$ be a positive integer, and $\pi$ an irreducible unitary representation of $GL(n,K)$. By Tadic's classification 
(see Theorem \ref{tadicclassif}), the representation 
$\pi$ is a commutative product (in the sense of normalised parabolic induction) of representations of the form $u(\D,k)$ for 
$k>0$ and $\D$ a discrete series, and representations of the form $\pi(u(\D,k),\alpha)$ (see Definition \ref{dfrigid}), 
for $\D$ and $k$ as before, and $\alpha$ an element of $(0,1/2)$. Then the representation 
$\pi$ is distinguished if and only if $\pi^\vee$ is isomorphic to $\pi^\sigma$, and the Speh representations $u(\D,k)$ 
occuring in the product $\pi$ with odd multiplicity are distinguished.
\end{thm*}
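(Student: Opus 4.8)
The plan is to reduce the general statement to the already-established Speh case (Corollary \ref{spehdist}, quoted above as a theorem) by a careful bookkeeping of how distinction behaves under the commutative product appearing in Tadic's classification. Write $\pi$ as a product $\prod_i u(\D_i,k_i)^{m_i} \times \prod_j \pi(u(\D_j',k_j'),\alpha_j)^{n_j}$ with the $u(\D_i,k_i)$ pairwise distinct. First I would dispose of the ``only if'' direction: if $\pi$ is distinguished, then a standard Gelfand-pair/closed-orbit argument shows $\pi^\vee\cong\pi^\sigma$ (this is the usual necessary condition, and under Tadic's classification and the uniqueness of the factorization it forces the multiset of factors to be $\sigma$-conjugate-dual as a multiset). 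Then, using Proposition \ref{higherdistunitary} on highest shifted derivatives together with the derivative computations for Speh and complementary-series factors, I would peel off the factors: the $\pi(u(\D',k'),\alpha)$ blocks come in pairs $\pi(u(\D',k'),\alpha)\times\pi(u(\D',k'),\alpha)^\vee$ glued by $\sigma$-duality and are automatically ``harmless'' (their contribution to the relevant period is nonzero for formal reasons, as the induced representation from a block and its dual is always distinguished), and likewise the Speh factors occuring with even multiplicity pair off. What remains is exactly the product of the odd-multiplicity $u(\D_i,k_i)$, each of which must then individually be distinguished; by Corollary \ref{spehdist} this is equivalent to each $\D_i$ being distinguished.

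For the ``if'' direction, assume $\pi^\vee\cong\pi^\sigma$ and that the odd-multiplicity Speh factors are distinguished; I want to build a nonzero $GL(n,F)$-invariant form on $\pi$. The strategy, following the ideas attributed to \cite{B84} in the introduction, is: (1) by Corollary \ref{spehdist} each odd-multiplicity $u(\D_i,k_i)$ is distinguished, and by the elementary observation recalled in the introduction each pair $u(\D,k)^2$ as well as each pair $\pi(u(\D',k'),\alpha)\times\pi(u(\D',k'),\alpha)^\sigma{}^\vee$ is distinguished (the latter two because a product $\tau\times\tau^{\sigma\vee}$ always carries an invariant form coming from the $GL_{2a}(F)$-orbit structure on $GL_{2a}(K)/$parabolic, essentially a linked/closed-orbit computation); (2) distinction is preserved under the commutative product in the unitary range — this is where I would invoke a ``hereditary'' principle for distinguished unitary representations, proving that if $\pi_1,\pi_2$ are distinguished irreducible unitary representations then $\pi_1\times\pi_2$ is distinguished and irreducible (irreducibility is Tadic, distinction is the content). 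Assembling the blocks gives a nonzero invariant form on the whole product, hence on $\pi$.

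The main obstacle, and the step I expect to occupy the bulk of the work, is step (2) of the ``if'' direction: showing that the product of distinguished irreducible unitary representations is distinguished. One cannot simply induce invariant forms in stages because the intermediate representations need not be the full induced representation — the subtlety flagged in the introduction is precisely that Speh representations are proper quotients of induced representations, so an invariant form on an induced representation need not descend, and conversely an invariant form on a subquotient need not lift. I would handle this by an argument on derivatives: use the Bernstein--Zelevinsky filtration of the restriction of $\pi_1\times\pi_2$ to the mirabolic subgroup, compute the top shifted derivative via the Leibniz-type formula, match it against the top derivatives of the $u(\D_i,k_i)$ and the complementary-series factors, and show by induction on $n$ (the rank) that a nonzero invariant form exists — the base cases being distinguished discrete series and the Speh case from Section \ref{sectionspeh}. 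The delicate point within this induction is controlling the possible ``extra'' invariant forms supported on non-open orbits and ensuring the form one constructs is genuinely $GL(n,F)$-invariant and nonzero on $\pi$ itself rather than on some other constituent; this is exactly the kind of analysis carried out in \cite{M11} for the generic case, and adapting it past the generic range to accommodate the non-induced Speh building blocks is the real content.
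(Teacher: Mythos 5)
The ``if'' direction of your proposal is workable but you have misjudged where the effort lies: since by Tadic's theorem the product of the blocks is already irreducible, once each block carries an invariant form (the paired blocks $\tau\times(\tau^\vee)^\sigma$ by the main theorem of \cite{BD08}, the odd-multiplicity Speh blocks by hypothesis together with Corollary \ref{spehdist}), distinction of $\pi$ follows at once from the hereditary property for parabolic induction, Proposition 26 of \cite{F92}. There is no quotient-descent issue at this stage because the induced representation in question \emph{is} $\pi$; that subtlety belongs to the proof of the Speh case itself, which you are quoting. So your proposed derivative induction for your step (2) replaces a two-line citation (this is exactly Proposition \ref{sigmaisdist} in the paper).

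The genuine gap is in your ``only if'' direction. Knowing that $\pi$ and all its iterated highest shifted derivatives are distinguished (Proposition \ref{higherdistunitary}) does not allow you to ``peel off'' the paired and complementary-series blocks and conclude that each odd-multiplicity Speh factor is individually distinguished: distinction of a product does not pass to its factors (for instance $\rho\times(\rho^\vee)^\sigma$ is distinguished with neither factor distinguished), and taking the highest shifted derivative lowers all the $k_i$ simultaneously, so it never isolates the odd-multiplicity factors from the rest. What is needed is a mechanism transferring distinction from the product to a factor, and nothing in your sketch plays that role. The paper's route is: write $\pi=\pi_1\times\pi_2$ with $\pi_1$ the product of the blocks having $k_i\geq 2$ and $\pi_2$ the generic unitary part; induct on the size of $\pi_1$, using Proposition \ref{higherdistunitary}, Lemma \ref{dernier} and Corollary \ref{spehdist} to show $\pi_1$ is $\sigma$-induced, hence distinguished; observe that $\pi_2$ is $P_{n_2}^\sigma$-distinguished (its top shifted derivative is trivial); and then apply Proposition \ref{crux}, whose proof rests on multiplicity one of $P_n^\sigma$-invariant forms on distinguished irreducible unitary representations (Proposition \ref{crucial}), on the convergence of the Kirillov-model integrals (Proposition \ref{convint}, via Bernstein's positivity criterion, Proposition \ref{criterionbernstein}), and on the fact that $P_{n_2}^\sigma$ and $w_{n_2}$ generate $G_{n_2}^\sigma$, to conclude that $\pi_2$ itself is distinguished; the generic classification, Theorem \ref{distgen}, then takes care of the odd-multiplicity discrete series. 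Without Proposition \ref{crux} or an equivalent, your assertion that the odd-multiplicity factors ``must then individually be distinguished'' is unsupported, and that assertion is precisely the hard content of the theorem.
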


\subsection*{Acknowledgments} I thank U.K. Anandavardhanan and A. Minguez for suggesting to study distinction for Speh representations, as well as 
helpful conversations. I also thank I. Badulescu for answering some questions about these representations. I thank S. Sugiyama for pointing out many 
typos. Finally, thanks to 
the referee's accurate reading and helpful comments, the general presentation of the paper improved significantly, 
and some proofs were clarified.

\section{Preliminaries}

\subsection{Basic facts and notations}\label{sectionsmooth}

First, in the following, we fix a non-archimedean local field $F$ of characteristic $0$, and an algebraic closure $\overline{F}$ of $F$.  
We denote by $K$ a quadratic extension of $F$ in $\overline{F}$. We denote by $\o_F$, $\p_F$, $\o_K$, $\p_K$, the respective ring of integers of 
$F$ and $K$ and their maximal ideals. We denote by $|.|_F$ and $|.|_K$ the normalised absolute values, they satisfy $|x|_K=|x|_F^2$ for $x$ in $F$.  
We fix a nontrivial smooth character $\theta$ of $K$, which is trivial on $F$. 
We denote by $\sigma$ the non trivial element of the Galois group $Gal_F(K)$ of $K$ over $F$, and by $\eta$ the quadratic 
character of $F^*$, whose kernel is the set of norms of $K^*$. For $n$ and $m\geq 1$, we denote by $\mathcal{M}_{n,m}$ the space of matrices 
$\mathcal{M}(n,m,K)$, by $\mathcal{M}_n$ the algebra 
$\mathcal{M}_{n,n}$, and by $G_n$ the group of invertible elements in $\mathcal{M}_n$. We will denote by $G_0$ the trivial group. 
If $m$ belongs to $\mathcal{M}_n$, we denote by $m^\sigma$ the matrix obtained from $m$ by applying $\sigma$ to each entry. 
If $S$ is a subset of $\mathcal{M}_n$, we denote by $S^\sigma$ the subset of $S$ consisting of elements fixed by $\sigma$.  
For $m\in \mathcal{M}_n$, we denote by $|m|_K$ or $\nu_K(m)$ the real number $|det(m)|_K$, and we define similarly $|m|_F$ or $\nu_F(m)$ 
for $m$ in $\mathcal{M}_n^\sigma$.
\par When $G$ is a closed subgroup of $G_n$, we denote by 
$Alg(G)$ the category of smooth complex $G$-modules. If $(\pi,V)$ belongs to $Alg(G)$, $H$ is a closed subgroup of $G$,
 and $\chi$ is a character of $H$, we denote by $V(H,\chi)$ the subspace of $V$ generated by vectors of the form $\pi(h)v-\chi(h)v$ 
for $h$ in $H$ and $v$ in $V$. 
This space is stable under the action of the subgroup $N_G(\chi)$ of the normalizer $N_G(H)$ of $H$ in $G$, which fixes $\chi$. 
We denote by $\delta_H$ the positive character of $N_G(H)$ such that if $\mu$ is a right Haar measure on $H$, and $int$ is the action 
of $N_G(H)$ on smooth functions $f$ with compact support in $H$, given by $(int(n)f)(h)=f(n^{-1}hn)$, then 
$\mu \circ int(n)= \delta_H(n)\mu $ for $n$ in $N_G(H)$. 
The space $V(H,\chi)$ is $N_G(\chi)$-stable. Thus, if $L$ is a closed-subgroup of $N_G(\chi)$, and $\delta'$ is a (smooth) character of 
$L$ (which will be a normalising character dual to that of normalised induction later), the quotient 
$V_{H,\chi}=V/V(H,\chi)$ (that we simply denote by $V_H$ when 
$\chi$ is trivial) becomes a smooth $L$-module for the (normalised) action $l.(v + V(H,\chi))= \delta'(l)\pi(l)v + V(H,\chi)$ of $L$ on 
$V_{H,\chi}$. If $(\rho,W)$ belongs to $Alg(H)$, we define the objects 
$(ind_H^G(\rho), V_c=ind_H^G(W))$ and $(Ind_H^G(\rho), V=Ind_H^G(W))$ of $Alg(G)$ as follows. The space $V$ is the space 
$\mathcal{C}^\infty(H\backslash G,\rho)$ of smooth functions 
from $G$ to $W$, fixed under right translation by the elements of a compact open subgroup 
$U_f$ of $G$, and  satisfying $f(hg)=\rho(h)f(g)$ for all $h$ in $H$ and $g$ in $G$. The space $V_c$ 
is the subspace $\mathcal{C}_c^\infty(H\backslash G,\rho)$ of $V$, consisting of functions with support compact mod $H$, in both cases, the action of $G$ is by right translation on the functions. By definition, the real part 
$Re(\chi)$ of a character $\chi$ of $F^*$ is the real number $r$ such that $|\chi(t)|_{\C}=|t|^r$, where $|z|_{\C}=\sqrt{z\bar{z}}$ for $z$ 
in $\C$. 

\subsection{Irreducible representations of GL(n)}\label{sectionirr}

We will only consider smooth representations of $G_n$ and its closed subgroups. 
We denote by $A_n$ the maximal torus of diagonal matrices in $G_n$. It will sometimes be useful to parametrize $A_n$ with simple roots, i.e. 
to write an element $t=diag(t_1,\dots,t_n)$ of $A_n$, as $t=z_1\dots z_n$, where $z_n=t_nI_n$, and $z_i=diag((t_i/t_{i+1})I_i,I_{n-i})$ 
belongs to the center of $G_i$ embedded in $G_n$, which we denote $Z_i$. For $z_i=diag(t_i I_i,I_{n-i})$ in $Z_i$, we denote $t_i$ by $t(z_i)$. 
If $n\geq 1$, let $\bar{n}=(n_1,\dots,n_t)$ be a partition of $n$ of length $t$ (i.e. an ordered
 set of $t$ positive integers 
whose sum is $n$), we denote by $M_{\bar{n}}$ the Levi subgroup of $G_n$, of matrices 
$diag(g_1,\dots,g_t)$, with each $g_i$ in $G_{n_i}$, by $N_{\bar{n}}$ the unipotent subgroup 
of matrices $\begin{pmatrix} I_{n_1} & \star & \star \\ & \ddots & \star \\ & & I_{n_t} \end{pmatrix}$, and by $P_{\bar{n}}$ the standard 
parabolic subgroup 
$M_{\bar{n}}N_{\bar{n}}$ (where $M_{\bar{n}}$ normalises $N_{\bar{n}}$). Note that $M_{(1,\dots,1)}$ is equal to $A_n$, and 
we set $N_{(1,\dots,1)}=N_n$. 
For each $i$, let $\pi_i$ be a smooth representation of $G_{n_i}$, then the tensor product 
$\pi_1 \otimes \dots \otimes \pi_t$ is a representation of $M_{\bar{n}}$, which can be considered as a representation of $P_{\bar{n}}$ trivial on 
$N_{\bar{n}}$. We will use the product notation  
$$\pi_1\times \dots \times \pi_t= Ind_{P_{\bar{n}}}^{G_n}(\d_{P_{\bar{n}}}^{1/2}\pi_1 \otimes \dots \otimes \pi_t )$$ for 
the normalised parabolic induction. 
\par We say that an irreducible 
representation $(\rho,V)$ of $G_n$ is cuspidal, if the Jacquet module $V_{N_{\bar{n}}}$ is zero whenever
 $\bar{n}$ is a proper partition of $n$. 
 Suppose that $\bar{n}=(m,\dots,m)$ is a partition of $n$ of length $l$, and that $\rho$ is a cuspidal representation of $G_m$. Let $a\leq b$ be two 
integers, such that $b-a+1= l$, then 
Theorem 9.3. of \cite{Z80} implies that the $G_n$-module $\nu_K^{a}\rho \times \dots \times \nu_K^{b}\rho$ has a 
unique irreducible quotient which we denote by $\D(\rho,b,a)$. We call it a segment or a quasi-discrete series of $G_n$. If in addition, a 
quasi-discrete series is unitary (which amounts to say that its central character is unitary), we will call it a discrete series 
or a unitary segment. We will sometimes write $St(\rho,l)=\D(\rho,(l-1)/2,-(l-1)/2).$\\  

We end this paragraph with a word about induced representations of Langlands' type and their quotients.

\begin{df}
Let $\D_1,\dots,\D_t$ be segments of respectively $G_{n_1},\dots,G_{n_t}$, and suppose that the central characters satisfy the relation 
$Re(c_{\D_i})\geq Re(c_{\D_{i+1}})$. Let $n=n_1+\dots+n_t$, then 
the representation $\D_1\times \dots \times \D_t$ of $G_n$ is said to be induced of Langlands' type.  
\end{df}

The following result is well-known, and can be found in \cite{R82}.

\begin{prop}\label{lquotient}
 Let $\pi=\D_1\times \dots \times \D_t$ be an induced representation of Langlands' type as above, then $\pi$ has a unique irreducible quotient, 
which we denote by $L(\D_1,\dots,\D_t)$. If $\D'_1,\dots,\D'_s$ are other segments with $Re(c_{\D'_j})\geq Re(c_{\D_{j+1}'})$, such that 
$L(\D_1,\dots,\D_t)= L(\D'_1,\dots,\D'_s)$, then we have the equality of non ordered sets 
$\{\D_1,\dots,\D_t\}=\{\D_1',\dots,\D_s'\}$. 
\end{prop}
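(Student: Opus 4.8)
The statement is the Langlands classification specialised to the groups $G_n$, and I would follow its standard proof, essentially the one reproduced in \cite{R82}.

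\emph{Existence and uniqueness of the irreducible quotient.} Write $P=P_{\bar n}$, $M=M_{\bar n}$ and $\sigma=\D_1\otimes\dots\otimes\D_t$, so that $\pi=Ind_P^{G_n}(\d_P^{1/2}\sigma)$, and let $\bar P$ be the parabolic opposite to $P$ with Levi $M$. For any irreducible representation $\Pi$ of $G_n$, the second adjointness theorem gives $Hom_{G_n}(\pi,\Pi)\cong Hom_M(\sigma,r_{\bar P}(\Pi))$, where $r_{\bar P}$ is the normalised Jacquet functor along $\bar P$; hence the irreducible quotients of $\pi$ are exactly the irreducible subquotients $\Pi$ of $\pi$ into whose Jacquet module $r_{\bar P}(\Pi)$ the representation $\sigma$ embeds. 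Since $r_{\bar P}$ is exact, $r_{\bar P}(\Pi)$ is a subquotient of $r_{\bar P}(\pi)$, which the geometric lemma of Bernstein and Zelevinsky describes by a finite filtration whose successive quotients are built from the various twists $w\cdot\sigma$ of $\sigma$ by Weyl representatives $w$, one of them being $\sigma$ itself. The positivity hypothesis $Re(c_{\D_i})\geq Re(c_{\D_{i+1}})$ means precisely that $\sigma$ is, among those successive quotients, the one of extremal exponent (the real part of its central character being maximal for the order given by the positive roots). Therefore $\sigma$ occurs as a subquotient of $r_{\bar P}(\pi)$ with multiplicity one, so there is a unique irreducible subquotient $\Pi_0$ of $\pi$ into whose Jacquet module $r_{\bar P}(\Pi_0)$ the representation $\sigma$ embeds. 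It follows that $Hom_{G_n}(\pi,\Pi)=0$ for $\Pi\not\cong\Pi_0$ and that $\dim Hom_{G_n}(\pi,\Pi_0)\leq 1$; since $\pi$ certainly has irreducible quotients, this quotient is $\Pi_0$, the space $Hom_{G_n}(\pi,\Pi_0)$ is one-dimensional, and the maximal proper submodule of $\pi$ (the kernel of a surjection onto $\Pi_0$) is unique. We put $L(\D_1,\dots,\D_t)=\Pi_0$; equivalently, it is the image of the long intertwining operator $\D_1\times\dots\times\D_t\to\D_t\times\dots\times\D_1$. (When several of the $Re(c_{\D_i})$ coincide a little care is needed: one groups the corresponding segments, uses that two \emph{linked} segments have central characters with distinct real parts so that within each group the $\D_i$ are pairwise unlinked and the product over the group is irreducible, and then applies the extremality argument to the groups.)

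\emph{Invariance of the multiset.} The cuspidal support is an invariant of $\Pi=L(\D_1,\dots,\D_t)$, so $L(\D_1,\dots,\D_t)=L(\D_1',\dots,\D_s')$ already forces the union of the $\D_i$ and the union of the $\D_j'$ to agree as multisets of cuspidal representations; in particular one reduces to segments with a fixed cuspidal support, of which there are only finitely many multisets. There I would invoke Zelevinsky's classification \cite{Z80}: the standard modules $\D_1\times\dots\times\D_t$, indexed by multisets of segments of total size $n$, form a $\Z$-basis of the Grothendieck group of finite length representations of $G_n$, and one has a unitriangular expansion
$$\D_1\times\dots\times\D_t=L(\D_1,\dots,\D_t)+(\text{integral combination of }L\text{'s with strictly smaller data})$$
for the natural partial order on multisegments; it follows that the $L(\D_1,\dots,\D_t)$ form a $\Z$-basis as well, hence are pairwise non-isomorphic for distinct multisets. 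Alternatively one argues directly by induction on $n$: from $\Pi=L(\D_1,\dots,\D_t)$ one recovers, by inspecting a suitable Jacquet module, the segment $\D_1$ (chosen with $Re(c_{\D_i})$ maximal, and of maximal length among those) together with a copy of $L(\D_2,\dots,\D_t)$, and then applies the inductive hypothesis.

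\emph{Main obstacle.} The genuinely delicate point is the extremality and multiplicity-one analysis of $r_{\bar P}(\pi)$ in the first step, and the combinatorial bookkeeping needed when several central characters share a real part; since this is precisely what \cite{R82} (building on \cite{Z80}) works out in detail, in the paper I would simply cite \cite{R82}.
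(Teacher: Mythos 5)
The paper gives no proof of this proposition at all: it states that the result is well-known and simply cites Rodier's Bourbaki report \cite{R82}, and your sketch is exactly the standard Langlands-classification argument that reference contains (second adjointness, the geometric lemma and the extremality of the exponent for existence and uniqueness of the quotient, and the unitriangular expansion of standard modules for the uniqueness of the multiset), with the genuinely delicate points explicitly deferred to \cite{R82}, just as the paper does. So the proposal is correct and takes essentially the same route as the paper.
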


A particular class of Langlands' quotients, is the class of Speh representations, which are the building blocks 
of the unitary dual of $G_n$, in Tadic's classification.

\begin{df}\label{Speh}
Let $k$ and $m$ be two positive integers, and set $n=km$. If $\D$ is a segment of $G_m$, we denote by $u(\D,k)$ the representation 
$L(\nu_K^{(k-1)/2}\D,\dots ,\nu_K^{(1-k)/2}\D)$ of $G_n$.  
\end{df}

We now recall some basic facts about Bernstein-Zelevinsky derivatives.

\subsection{Derivatives}\label{sectionder}

We define a character of $N_n$ which we denote again by $\theta$, by the formula $\theta(m)=\theta(\sum_{i=1}^{n-1} m_{i,i+1})$. 
For $n\geq2$ we denote by $U_{n}$ the group of matrices of the form $\begin{pmatrix} I_{n-1} & v \\& 1 \end{pmatrix}$. 
For $n>k\geq 1$, the group $G_{k}$ embeds naturally in $G_{n}$, and is given by matrices of the form 
$diag(g,I_{n-k})$. We denote by $P_n$ the mirabolic subgroup $G_{n-1}U_n$ of $G_n$ for $n\geq 2$, and we set $P_1=\{1_{G_1}\}$.
 If one sees $P_{n-1}$ as a subgroup of $G_{n-1}$ 
itself embedded in $G_n$, then $P_{n-1}$ is the normaliser of $\theta_{|U_n}$ in $G_{n-1}$ (i.e. if $g\in G_{n-1}$, then 
$\theta (g^{-1}ug)$ for all $u\in U_n$ if and only if $g\in P_{n-1}$). We define the following functors:

\begin{itemize}

\item The functor $\Phi^{+}$ from $Alg(P_{k-1})$ to $Alg(P_{k})$ such that, for $\pi$ in $Alg(P_{k-1})$, one has
$\Phi^{+} \pi = ind_{P_{k-1}U_k}^{P_k}(\delta_{U_k}^{1/2}\pi \otimes \theta)$.

\item The functor $\hat{\Phi}^{+}$ from $Alg(P_{k-1})$ to $Alg(P_{k})$ such that, for $\pi$ in $Alg(P_{k-1})$, one has
$\hat{\Phi}^{+} \pi = Ind_{P_{k-1}U_k}^{P_k}(\delta_{U_k}^{1/2}\pi \otimes \theta)$.

 \item The functor $\Phi^{-}$ from $Alg(P_k)$ to $Alg(P_{k-1})$ such that, if $(\pi,V)$ is a smooth $P_k$-module, 
$\Phi^{-} V =V_{U_k,\theta}$, and $P_{k-1}$ acts on $\Phi^{-}(V)$ by
 $\Phi^{-} \pi (p)(v+V(U_k,\theta))= \delta_{U_k} (p)^{-1/2}\pi (p)(v+V(U_k,\theta))$.

\item The functor $\Psi^{-}$ from $Alg(P_k)$ to $Alg(G_{k-1})$, such that if $(\pi,V)$ is a smooth $P_k$-module, 
$\Psi^{-} V =V_{U_k,1}$, and $G_{k-1}$ acts on $\Psi^{-}(V)$ by
 $\Psi^{-} \pi (g)(v+V(U_k,1))= \delta_{U_k} (g)^{-1/2}\pi (g)(v+V(U_k,1))$.

\item The functor $\Psi^{+}$ from $Alg(G_{k-1})$ to $Alg(P_{k})$, such that for $\pi$ in $Alg(G_{k-1})$, one has
$\Psi^{+} \pi = ind_{G_{k-1}U_k}^{P_k}(\delta_{U_k}^{1/2}\pi \otimes 1)=\delta_{U_k}^{1/2}\pi \otimes 1 $.

\end{itemize}
 
If $\tau$ is a representation of $P_n$ (or a representation of $G_n$, which we consider as a $P_n$-module by restriction), 
the representation $\tau^{(k)}$ of $G_{n-k}$ will be defined as $\Psi^{-}(\Phi^{-})^{k-1}\tau$, and will be called the $k$-th derivative of $\tau$.
It is shown in Section 3.5 of \cite{BZ77}, that these representations give a natural filtration of any $P_n$-module.

\begin{LM}\label{filtr}
If $\tau$ is an object of $Alg(P_n)$, then $\tau$ has a natural filtration of $P_n$-modules 
$0\subset \tau_{n} \subset \dots \subset \tau_{1}=\tau$, where $\tau_k= {\Phi^{+}}^{k-1} {\Phi^{-}}^{k-1}\tau$. Moreover the quotient 
$\tau_k/\tau_{k+1}$ is isomorphic to $(\Phi^{+})^{k-1}\Psi^+ \tau^{(k)} $ as a $P_n$-module. 
\end{LM}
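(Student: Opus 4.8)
The plan is to deduce this from the basic formal properties of the four functors $\Phi^{\pm},\Psi^{\pm}$ established in Section 3 of \cite{BZ77}: each of them is exact, and for every object $\tau$ of $Alg(P_k)$ (with $k\geq 2$) there is a short exact sequence of $P_k$-modules, natural in $\tau$,
\[ 0 \longrightarrow \Phi^{+}\Phi^{-}\tau \longrightarrow \tau \longrightarrow \Psi^{+}\Psi^{-}\tau \longrightarrow 0, \]
together with the identities $\Phi^{-}\Phi^{+}\cong \mathrm{id}$, $\Psi^{-}\Psi^{+}\cong\mathrm{id}$, $\Psi^{-}\Phi^{+}=0$ and $\Phi^{-}\Psi^{+}=0$. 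Granting these, the lemma becomes a bookkeeping argument; the only care needed is tracking the indices $n-k+1$ through the iterated functors.

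First I would set $\tau_k=(\Phi^{+})^{k-1}(\Phi^{-})^{k-1}\tau$, so $\tau_1=\tau$, and observe that $(\Phi^{-})^{k-1}\tau$ lies in $Alg(P_{n-k+1})$; hence $\Phi^{-}$ can be iterated at most $n-1$ times from $Alg(P_n)$ and the process naturally stops, which is why the filtration has length $n$ (one sets $\tau_{n+1}:=0$). To produce the filtration and identify its graded pieces simultaneously, apply the fundamental exact sequence to the object $(\Phi^{-})^{k-1}\tau$ of $Alg(P_{n-k+1})$: since $\Phi^{+}\Phi^{-}(\Phi^{-})^{k-1}\tau=\Phi^{+}(\Phi^{-})^{k}\tau$ and $\Psi^{+}\Psi^{-}(\Phi^{-})^{k-1}\tau=\Psi^{+}\tau^{(k)}$ by the very definition $\tau^{(k)}=\Psi^{-}(\Phi^{-})^{k-1}\tau$, this reads
\[ 0 \longrightarrow \Phi^{+}(\Phi^{-})^{k}\tau \longrightarrow (\Phi^{-})^{k-1}\tau \longrightarrow \Psi^{+}\tau^{(k)} \longrightarrow 0. \]
Now apply the exact functor $(\Phi^{+})^{k-1}$; exactness turns this into the short exact sequence of $P_n$-modules
\[ 0 \longrightarrow \tau_{k+1} \longrightarrow \tau_k \longrightarrow (\Phi^{+})^{k-1}\Psi^{+}\tau^{(k)} \longrightarrow 0, \]
which at once gives the inclusion $\tau_{k+1}\subseteq\tau_k$ and the asserted isomorphism $\tau_k/\tau_{k+1}\cong (\Phi^{+})^{k-1}\Psi^{+}\tau^{(k)}$. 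Naturality of the fundamental sequence makes every map here $P_n$-equivariant, so the filtration and the isomorphisms are canonical, as claimed.

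It remains only to check the two ends: $\tau_1=(\Phi^{+})^{0}(\Phi^{-})^{0}\tau=\tau$, while at the top $(\Phi^{-})^{n-1}\tau$ lies in $Alg(P_1)=Alg(\{1\})$, where $\Phi^{-}$ is no longer defined, so $\tau_{n+1}=0$ and the chain $0\subset\tau_n\subset\dots\subset\tau_1=\tau$ is finite as stated (with each $\tau^{(k)}$ a representation of $G_{n-k}$, in particular $\tau^{(n)}$ just a vector space). I expect no genuine obstacle beyond the correct index bookkeeping: the entire substance is the input from \cite{BZ77}, namely exactness of $\Phi^{\pm},\Psi^{\pm}$ and the fundamental exact sequence $0\to\Phi^{+}\Phi^{-}\to\mathrm{id}\to\Psi^{+}\Psi^{-}\to 0$, and once those are granted the proof is a straightforward induction on $k$.
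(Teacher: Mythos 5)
Your argument is correct and is essentially the same as the one the paper relies on: the paper does not prove Lemma \ref{filtr} but simply recalls it from Section 3.5 of \cite{BZ77}, and your derivation from the exactness of $\Phi^{\pm},\Psi^{\pm}$ and the fundamental exact sequence $0\to\Phi^{+}\Phi^{-}\to\mathrm{id}\to\Psi^{+}\Psi^{-}\to 0$, applied to $(\Phi^{-})^{k-1}\tau$ and pushed through the exact functor $(\Phi^{+})^{k-1}$, is exactly the Bernstein--Zelevinsky argument that citation refers to. The only delicate point, the termination $\tau_{n+1}=0$ at the bottom of the filtration, is a boundary convention for $P_1$-modules which you handle adequately.
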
 

It is shown in Section 8 of \cite{Z80}, that if $\pi$ is an irreducible representation of $G_n$, then its highest derivative $\pi^-$, 
which is the derivative $\pi^{(k)}$, for $k\leq n$ maximal for the condition $\pi^{(k)}\neq 0$, is an irreducible representation of 
$G_{n-k}$. The following lemma, is an immediate consequence of Lemma 4.5. of \cite{BZ77}.

\begin{LM}\label{highderproduct}
Let $\pi_i$ be an irreducible representation of $G_{n_i}$, for positive integers $n_1,\dots,n_t$. Then the highest derivative of 
$\pi_1\times \dots \times \pi_t$ is the representation $\pi_1^-\times \dots \times \pi_t^-$.
\end{LM}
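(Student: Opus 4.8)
The plan is to read off the statement from the Leibniz rule for Bernstein--Zelevinsky derivatives of a parabolically induced representation, namely Lemma 4.5 of \cite{BZ77}: for smooth representations $\tau_1,\tau_2$ of $G_{m_1}$ and $G_{m_2}$ and any integer $k\geq 0$, the $G_{m_1+m_2-k}$-module $(\tau_1\times\tau_2)^{(k)}$ admits a finite filtration whose successive quotients are the representations $\tau_1^{(i)}\times\tau_2^{(j)}$ with $i+j=k$, $0\leq i\leq m_1$ and $0\leq j\leq m_2$ (with the convention $\tau^{(0)}=\tau$). Iterating this along the associativity of $\times$ (which is legitimate because $\Psi^-$, $\Phi^-$ and normalised parabolic induction are exact), one obtains, for irreducible $\pi_1,\dots,\pi_t$, a finite filtration of $(\pi_1\times\dots\times\pi_t)^{(k)}$ whose successive quotients are the $\pi_1^{(k_1)}\times\dots\times\pi_t^{(k_t)}$, the tuple $(k_1,\dots,k_t)$ running over all tuples of nonnegative integers with $k_1+\dots+k_t=k$ and $k_i\leq n_i$ for every $i$.

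For each $i$, let $d_i$ be the largest index with $\pi_i^{(d_i)}\neq 0$, so $\pi_i^-=\pi_i^{(d_i)}$; recall that $\pi_i^-$ is moreover irreducible by the result of \cite{Z80} recalled above. First I would note that if $k>d_1+\dots+d_t$, then every tuple in the filtration has $k_j>d_j$ for some $j$, hence $\pi_j^{(k_j)}=0$ and the corresponding quotient $\pi_1^{(k_1)}\times\dots\times\pi_t^{(k_t)}$ is zero, so $(\pi_1\times\dots\times\pi_t)^{(k)}=0$. Next, for $k=d_1+\dots+d_t$, the only tuple with $\sum_i k_i=\sum_i d_i$ and $k_i\leq d_i$ for all $i$ is $(d_1,\dots,d_t)$; hence all successive quotients of the filtration vanish except possibly $\pi_1^-\times\dots\times\pi_t^-$. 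A finite filtration with a single nonzero successive quotient is isomorphic to that quotient, so $(\pi_1\times\dots\times\pi_t)^{(d_1+\dots+d_t)}\simeq\pi_1^-\times\dots\times\pi_t^-$, which is nonzero because normalised parabolic induction of a nonzero smooth representation is nonzero and each $\pi_i^-\neq 0$. Therefore the highest derivative of $\pi_1\times\dots\times\pi_t$ is attained in degree $d_1+\dots+d_t$ and equals $\pi_1^-\times\dots\times\pi_t^-$, as asserted.

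Since every tool is already in place, I do not expect a genuine obstacle: the lemma is, as stated, an immediate consequence of the Leibniz rule. The only things demanding attention are the purely combinatorial verification of which index tuples survive in the top degree, and the two routine facts used along the way --- that normalised parabolic induction is exact and sends a nonzero representation to a nonzero one --- which are what make both the iteration of Lemma 4.5 of \cite{BZ77} and the final nonvanishing legitimate.
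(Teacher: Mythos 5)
Your proof is correct and follows exactly the route the paper intends: the paper gives no written argument, simply declaring the lemma an immediate consequence of Lemma 4.5 of \cite{BZ77}, and your write-up just makes explicit the Leibniz-rule filtration and the top-degree combinatorics behind that claim.
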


As we study unitary representations, we will need some further properties of these derivatives, which are extracted from \cite{B84}. 
First, as in this reference, we introduce the following definition.

\begin{df}
 Let $\tau$ be a $P_n$-module, we denote by $\tau^{[k]}$ the representation $\nu_K^{1/2}\tau^{(k)}$ of $G_{n-k}$, 
and call it the $k$-th shifted derivative of $\tau$. We denote by $\tau^{[-]}$ the highest shifted derivative of $\tau$.
\end{df}

We then recall the following consequence of the unitarisability criterion given in Section 7.3 of \cite{B84}. 

\begin{prop}\label{criterionbernstein}
If $\pi$ is an irreducible unitary representation of $G_n$, with highest derivative $\pi^{(h)}$. Then $\pi^{[h]}$ is 
unitary, and the central characters of the irreducible subquotients of $\pi^{[k]}$ all have positive real parts for $0<k<h$.
\end{prop}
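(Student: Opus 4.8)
The plan is to derive the statement from the explicit description of the unitary dual of $G_n$ recalled in Tadi\'c's classification, rather than reproving Bernstein's criterion from scratch; indeed the proposition is stated as a ``consequence of the unitarisability criterion given in Section 7.3 of \cite{B84}'', so the work is in extracting it cleanly. First I would reduce to the case of a Speh representation $\pi=u(\D,k)$ with $\D$ a discrete series of $G_m$ and $n=km$, and of a complementary-series (rigid) block $\pi(u(\D,k),\alpha)=\nu_K^\alpha u(\D,k)\times\nu_K^{-\alpha}u(\D,k)$ with $\alpha\in(0,1/2)$, since a general irreducible unitary $\pi$ is a commutative product of such pieces; by Lemma \ref{highderproduct} the highest derivative of a product is the product of highest derivatives, and the highest shifted derivative then behaves multiplicatively as well, so unitarity and positivity of real parts of central characters of subquotients pass through products. (One has to check that the shift $\nu_K^{1/2}$ is compatible with this factorisation, i.e. that $h=\sum h_i$ and $\nu_K^{1/2}$ distributes correctly over the product of derivatives; this is a bookkeeping point using that $\delta$-shifts and the $\nu_K^{1/2}$-twist interact additively in the exponents.)

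Next, for the building blocks I would compute the relevant derivatives explicitly. For $\rho$ cuspidal the only nonzero derivatives of $\rho$ are $\rho^{(0)}=\rho$ and $\rho^{(m)}=\1$, and for a discrete series $\D=St(\rho,l)$ of $G_{lm}$ the highest derivative is $\D^{(m)}=\nu_K^{1/2}\D(\rho,(l-3)/2,-(l-1)/2)$ up to a twist, so $\D^{[-]}=\D(\rho,\cdot,\cdot)$ is again a discrete series (shifted so that its central character is unitary) — this is the classical fact from Section 8 of \cite{Z80} combined with the definition of the shifted derivative. For the Speh representation $u(\D,k)$ one uses the Leibniz-type formula for derivatives of Langlands quotients (or of the standard module $\nu_K^{(k-1)/2}\D\times\dots\times\nu_K^{(1-k)/2}\D$, passing to the quotient), together with Lemma \ref{filtr}, to identify $u(\D,k)^{[-]}$: it should again be a Speh representation $u(\D^{[-]},k)$ (possibly $u(\D',k)$ with $\D'$ a discrete series obtained from $\D$), hence unitary. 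The intermediate shifted derivatives $u(\D,k)^{[j]}$ for $0<j<h$ are then built from the $\nu_K^{\pm}$-shifts appearing inside the Speh chain, and one reads off directly that the central characters of their irreducible subquotients have strictly positive real part, because the extreme segment $\nu_K^{(k-1)/2}\D$ contributes the dominant exponent and the derivative can only lower it by a bounded amount.

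The cleanest route for the positivity statement is to invoke Bernstein's criterion in the contrapositive form: an irreducible representation all of whose shifted derivatives $\pi^{[k]}$, $0\le k\le h$, are unitary, and whose intermediate ones have subquotients with nonnegative real-part central characters, is unitary — and more precisely, Section 7.3 of \cite{B84} shows that if $\pi$ is unitary then the "leading" piece $\pi^{[h]}$ is unitary and the lower $\pi^{[k]}$ are "positive" in the above sense. So I would simply quote the relevant inequality from \cite{B84} for the two families of building blocks and assemble. \textbf{The main obstacle} I anticipate is the careful combinatorial verification that the highest shifted derivative of a Speh representation is again unitary: Speh representations are not parabolically induced, so one cannot just use Lemma \ref{highderproduct}; instead one must handle the derivative of the Langlands quotient $u(\D,k)=L(\nu_K^{(k-1)/2}\D,\dots,\nu_K^{(1-k)/2}\D)$, which requires knowing that the highest derivative functor is exact in the right degree and commutes appropriately with the passage from standard module to Langlands quotient, together with Zelevinsky-theoretic identification of the resulting segment multiset. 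Tracking the $\nu_K$-exponents through this computation, and checking the strict positivity for $0<k<h$ at the boundary case $k=h-1$, is where the real care is needed; everything else is formal.
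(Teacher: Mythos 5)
The paper offers no argument for this proposition: it is recalled, without proof, as a consequence of the unitarisability criterion of Section 7.3 of \cite{B84}, and that citation \emph{is} the proof. Your closing suggestion to ``simply quote the relevant inequality from \cite{B84}'' therefore collapses onto the paper's route, except that Bernstein's statement applies directly to the irreducible unitary representation $\pi$ itself, so the whole preliminary reduction through Tadic's classification buys nothing. If instead you genuinely intend to rederive the statement from Theorem \ref{tadicclassif}, note that this is logically delicate: Tadic's classification, and in particular the unitarity of the representations $u(\D,k)$ and of their products, rests on Bernstein's theory of derivatives of unitary representations, so proving Bernstein's positivity statement from the classification risks circularity.

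More concretely, the derivative computations on which your reduction rests are wrong. A discrete series $\D=St(\rho,l)$, with $\rho$ cuspidal of $G_m$, is generic, so its highest derivative is the $(lm)$-th one and equals $\1_{G_0}$ (as recalled in Proposition \ref{highderspeh}); the formula you propose, $\D^{(m)}$ equal to a twist of $\D(\rho,(l-3)/2,-(l-1)/2)$, is the derivative formula for the Zelevinsky segment representation attached to the segment, not for the essentially square-integrable quotient used here. Consequently your guess $u(\D,k)^{[-]}=u(\D^{[-]},k)$ is false as well: the correct formula, which is exactly Proposition \ref{highderspeh} (from \cite{T86-2}, proved in \cite{OS08}), is $u(\D,k)^{[m]}=u(\D,k-1)$ --- the highest shifted derivative shortens $k$, not $\D$ --- and its unitarity is then supplied by Theorem \ref{tadicclassif}. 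Finally, the positivity of the real parts of the central characters of the subquotients of the intermediate shifted derivatives $u(\D,k)^{[j]}$, $0<j<h$, does not follow from the heuristic that ``the extreme segment contributes the dominant exponent'': one would need either the full derivative structure of Speh representations (a nontrivial theorem, not deducible from Lemma \ref{filtr} alone, since $u(\D,k)$ is only a quotient of the standard module) or a careful exactness argument bounding its subquotients by those of the shifted derivatives of the standard module, using the segment derivative formula of \cite{Z80} correctly. As written, the proposal neither matches the paper's (purely bibliographical) justification nor constitutes a complete independent proof.
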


\subsection{Unitary representations of GL(n)}\label{sectionunitary}

We now recall results from \cite{T86}, about the classification of irreducible unitary representations of $G_n$. 

\begin{df}\label{dfrigid}
For $\alpha \in \R$, $m>0$, $k>0$, and $\D$ a segment of $G_m$ we denote by $\pi(u(\D,k),\alpha)$ the representation 
$\nu_K^\alpha u(\D,k)\times \nu_K^{-\alpha} u(\D,k)$ of $G_n$, for $n=2mk$.
\end{df}

We now state Theorem D of \cite{T86}.

\begin{thm}\label{tadicclassif}
 Let $\pi$ be an irreducible unitary representation of $G_n$, then there is a partition $(n_1,\dots,n_t)$ of $n$ and 
representations $\pi_i$ of $G_{n_i}$, 
each of which is either of the form $\pi(u(\D_i,k_i),\alpha_i)$ for $\D_i$ a unitary segment, $k_i\geq 1$, and $0<\alpha_i<1/2$, or of the form 
$u(\D_i,k_i)$ for $\D_i$ a unitary segment and $k_i\geq 1$, such that $\pi=\pi_1\times \dots \times \pi_t$.
 Moreover, the representation $\pi$ is equal to $\pi'_1\times \dots \times\pi'_s$, for representations $\pi'_j$ of the same type as the representations $\pi_i$, 
if and only if $\{\pi_1,\dots,\pi_t\}=\{\pi'_1,\dots,\pi'_s\}$ as non ordered sets.
\end{thm}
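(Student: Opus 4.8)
\emph{Proof proposal.} This is Theorem D of \cite{T86}, so the plan is to follow Tadic's argument, organised into a \textbf{constructive} step (every representation in the stated list is irreducible and unitary), an \textbf{exhaustion} step (no other irreducible unitary representation occurs), and a \textbf{uniqueness} step for the multiset of factors. Throughout one works in the Grothendieck group of finite length representations, using the Bernstein--Zelevinsky theory of \cite{Z80,BZ77} and the Langlands classification of Proposition~\ref{lquotient}.

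For the constructive step I would first establish unitarity of the building blocks. Granting that $u(\D,k)$ is unitary, the representation $\pi(u(\D,k),\alpha)=\nu_K^{\alpha}u(\D,k)\times \nu_K^{-\alpha}u(\D,k)$ is irreducible and unitary for $\alpha\in(0,1/2)$ by the usual complementary series argument: it is Hermitian, at $\alpha=0$ it is the irreducible unitary representation $u(\D,k)\times u(\D,k)$, and the first point of reducibility along the line $\alpha\mapsto \nu_K^{\alpha}u(\D,k)\times\nu_K^{-\alpha}u(\D,k)$ is $\alpha=1/2$, so the whole open interval stays irreducible and unitary. The unitarity of $u(\D,k)$ itself is the delicate input: I would obtain it by the method of ends of complementary series, realising $u(\D,k)$ as a Jordan--H\"older factor of the representation sitting at the first reducibility point of a complementary series whose building blocks are already known to be unitary (for instance by an induction on $k$), so that $u(\D,k)$ inherits unitarity as a weak limit of irreducible unitary representations. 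Finally, irreducibility (hence unitarity) of a product $\pi_1\times\dots\times\pi_t$ of building blocks of the two allowed shapes follows from Bernstein's theorem \cite{B84} that a product of irreducible unitary representations of general linear groups over a $p$-adic field is irreducible.

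For exhaustion I would induct on $n$. Given an irreducible unitary $\pi$ of $G_n$, write $\pi=L(\D_1,\dots,\D_t)$ as a Langlands quotient. Unitarity forces $\pi$ to be Hermitian, so by the uniqueness in Proposition~\ref{lquotient} the multiset $\{\D_1,\dots,\D_t\}$ is stable under the Hermitian-duality involution on segments. Next, by Proposition~\ref{criterionbernstein} the highest shifted derivative $\pi^{[-]}$ is an irreducible unitary representation of a smaller $G_{n-h}$, to which the induction hypothesis applies, while the central characters of the subquotients of the intermediate shifted derivatives $\pi^{[j]}$ for $0<j<h$ all have positive real part. Comparing these positivity constraints with the Jacquet modules of $\pi$ along maximal parabolics, and using Lemma~\ref{highderproduct} together with the known shifted derivatives of Speh representations, one forces the segments $\D_i$ to group themselves into Speh blocks $\nu_K^{(k-1)/2}\D,\dots,\nu_K^{(1-k)/2}\D$ and into symmetric pairs $\nu_K^{\alpha}(\text{Speh}),\nu_K^{-\alpha}(\text{Speh})$ with $\alpha\in(0,1/2)$, and to reconstruct $\pi$ from $\pi^{[-]}$. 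Excluding all remaining configurations --- an asymmetric segment multiset, a symmetric pair with $\alpha\geq 1/2$, or a single non-unitary segment --- is where essentially all the work lies, and is the main obstacle; it is exactly the step using the full strength of the unitarisability estimates of \cite{B84}.

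For uniqueness, suppose $\pi=\pi_1\times\dots\times\pi_t=\pi_1'\times\dots\times\pi_s'$ with all factors of the allowed type. Expanding each factor into its constituent segments exhibits $\pi$ in two ways as an induced representation of Langlands' type, so Proposition~\ref{lquotient} yields equality of the two resulting multisets of segments. It then suffices to note that a multiset of segments of this special shape decomposes into Speh blocks and symmetric Speh pairs in at most one way: the Speh factors are read off from the part of the multiset on the unitary axis, and the complementary series factors from the off-axis segments, which pair up uniquely by the symmetry established in the exhaustion step. This gives $\{\pi_1,\dots,\pi_t\}=\{\pi_1',\dots,\pi_s'\}$, completing the argument.
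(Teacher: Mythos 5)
The paper itself offers no proof of this statement: it is quoted verbatim as Theorem D of \cite{T86}, so the only argument in the paper is the citation of Tadic's classification. Your opening sentence does the same, and as a citation your answer matches the paper. Read as an actual proof, however, your sketch has two genuine gaps, and they sit exactly at the points where Tadic's argument is hard. The first is the unitarizability of the Speh representations $u(\D,k)$. Your ends-of-complementary-series induction does not close as stated: granting $u(\D,k)$ unitary, the limit at $\alpha=1/2$ gives a positive semi-definite invariant form on the endpoint representation of Theorem \ref{end}, hence unitarizability of the subquotient $u(\D,k-1)\times u(\D,k+1)$; but unitarizability of an irreducible induced representation does not formally imply unitarizability of the inducing data --- the complementary series $\pi(u(\D,k),\alpha)$ themselves are the standard counterexample to that implication --- so you cannot simply ``read off'' that $u(\D,k+1)$ is unitary from the unitarity of the product together with that of $u(\D,k-1)$. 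Moreover the irreducibility of $\nu_K^{\alpha}u(\D,k)\times\nu_K^{-\alpha}u(\D,k)$ on all of $(0,1/2)$, which your complementary-series deformation needs, is itself a nontrivial statement about products of Speh representations and is asserted, not proved. This extraction/unitarizability input is precisely the delicate part of \cite{T86} (resting on \cite{B84} and further work), and the proposal does not supply a substitute.

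The second gap is the exhaustion step. Saying that the positivity constraints of Proposition \ref{criterionbernstein} on the intermediate shifted derivatives, compared with Jacquet modules, ``force the segments to group themselves into Speh blocks and symmetric pairs with $\alpha\in(0,1/2)$'' is a restatement of the desired conclusion rather than an argument; you acknowledge yourself that ``essentially all the work lies'' there. As it stands the proposal is a reasonable roadmap of Tadic's strategy (and the uniqueness step, via Proposition \ref{lquotient} and the unambiguous regrouping of the segment multiset, is fine), but it is not a proof; for the purposes of this paper the correct move is the one the paper makes, namely to invoke \cite{T86} as a black box.
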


If all the representations $\pi_i$ in the above theorem, are such that $k_i=1$, we say that $\pi$ is a \textit{generic} 
unitary representation of $G_n$.\\

We will also need the description of the composition series of the so-called end of complementary series, which is proved in 
\cite{T87} (see Theorem 2 of \cite{B11} for a quick proof). If $\D$ is the segment 
$St(\rho,l)$ for $l\geq 1$, we write $\D_+= St(\rho,l+1)$, and $\D_-= St(\rho,l-1)$, where $St(\rho,0)$ is $\1_{G_0}$ by convention.

\begin{thm}\label{end}
Let $m$ be a positive integer, $\D$ a segment of $G_m$, $k\geq 2$ an integer, and $n=2mk$. The representation 
$\pi(u(\D,k),1/2)$ of $G_n$ is of length $2$, and its irreducible subquotients are 
$u(\D_-,k)\times u(\D_+,k)$ and $u(\D,k-1)\times u(\D,k+1)$.
\end{thm}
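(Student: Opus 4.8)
The plan is to reduce the problem to the combinatorics of a single cuspidal line, identify the two representations in the statement as the unique irreducible quotient and the unique irreducible submodule of $\pi(u(\D,k),1/2)$, and then prove that there is nothing else by transferring to a division algebra.

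First I would write $\D=St(\rho,l)$ for a unitary cuspidal representation $\rho$ of some $G_d$, so that $m=dl$ and $n=2dlk$. Every irreducible subquotient of $\pi(u(\D,k),1/2)=\nu_K^{1/2}u(\D,k)\times\nu_K^{-1/2}u(\D,k)$ has cuspidal support among the twists $\nu_K^j\rho$ with $j$ in a fixed coset of $\Z$, so one may work entirely inside the corresponding Bernstein block, equivalently with Zelevinsky multisegments on that line: there $u(\D,k)$ is the rigid representation of the $l\times k$ rectangle (width $l$, height $k$), $u(\D_\pm,k)$ of the $(l\pm1)\times k$ rectangle, and $u(\D,k\pm1)$ of the $l\times(k\pm1)$ rectangle. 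Note that $u(\D_-,k)\times u(\D_+,k)$ and $u(\D,k-1)\times u(\D,k+1)$ are products of Speh building blocks lying on a single line (with one factor equal to $\1_{G_0}$ when an index drops to $0$), hence are irreducible by Theorem \ref{tadicclassif}, and they are distinct since the underlying multisets of building blocks differ.

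Next, writing each factor $\nu_K^{\pm1/2}u(\D,k)$ as the Langlands quotient of the standard module made from the shifted rows of its rectangle (Definition \ref{Speh}), $\pi(u(\D,k),1/2)$ is realized as a quotient of a single standard module of Langlands type, hence has a unique irreducible quotient by Proposition \ref{lquotient}; computing the Langlands data — the two rectangles, overlapping and shifted by one step along the diagonal on which $\nu_K$ acts, collapse to the two nested rectangles of width $l$ and heights $k+1$ and $k-1$ — identifies this quotient as $u(\D,k-1)\times u(\D,k+1)$. Applying the Zelevinsky involution, which on a single line exchanges rows and columns of a rectangle and carries the unique irreducible quotient to the unique irreducible submodule, dually identifies $u(\D_-,k)\times u(\D_+,k)$ as the unique irreducible submodule of $\pi(u(\D,k),1/2)$. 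As these two are distinct, $\pi(u(\D,k),1/2)$ is already reducible and each of them occurs in it with multiplicity one.

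The remaining point — that the length is exactly two — is where genuine input is needed, and is the main obstacle: for general $l$ the two factors are Speh representations attached to honest rectangles rather than to segments, and products of rectangles can a priori have many composition factors. The quick way around this is the Jacquet--Langlands correspondence (as in \cite{B11}): choosing a central division algebra $D$ over $K$ of degree $l$, the segment $\D=St(\rho,l)$ is the transfer of a cuspidal representation $\rho_D$ of a general linear group over $D$, and $u(\D,k)$ is the transfer of the Speh representation $u(\rho_D,k)$; since $\rho_D$ is cuspidal, $u(\rho_D,k)$ is the representation attached to a single segment of length $k$ over $D$, so $\nu_K^{1/2}u(\rho_D,k)\times\nu_K^{-1/2}u(\rho_D,k)$ is a product of two \emph{linked} segments (in Zelevinsky's sense) and is therefore of length exactly two by the elementary theory of such products; transporting the composition series back through the correspondence, which preserves length and matches the two constituents found above, gives the theorem. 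One can also avoid $D$ and compute $[\nu_K^{1/2}u(\D,k)]\cdot[\nu_K^{-1/2}u(\D,k)]$ directly in the Grothendieck group via Zelevinsky's multiplication formula / Kazhdan--Lusztig polynomials for a symmetric group, where this particular ``singly linked'' configuration of rectangles makes the relevant Bruhat interval have length one, so that only two irreducibles occur, each with coefficient one; the finer Jacquet-module analysis of \cite{T87} is yet another route. In all cases everything before this step — the reduction, and the identification of the two constituents as the unique submodule and the unique quotient — is routine bookkeeping with the Langlands classification and the Zelevinsky involution.
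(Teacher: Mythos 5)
Your statement is one the paper does not prove at all: it is quoted from Tadic \cite{T87}, with Badulescu's short proof \cite{B11} cited, so you are reconstructing a background result rather than a proof given in the text. Your first two steps are essentially sound as a sketch: realizing $\pi(u(\D,k),1/2)$ as a quotient of a standard module and reading off $u(\D,k-1)\times u(\D,k+1)$ as its unique irreducible quotient, then identifying $u(\D_-,k)\times u(\D_+,k)$ as the unique irreducible submodule by duality, works, modulo routine care (for $k>2$ the concatenated segments are not in decreasing order at the junction, so the reordering of the standard module has to be justified; and the involution argument should be run on the dual product $\pi(u(St(\rho,k),l),1/2)$, whose Langlands quotient one computes, and then dualized back).

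The genuine gap is exactly where you place it, and the Jacquet--Langlands argument you offer does not close it. Write $\D=St(\rho,l)$ with $\rho$ cuspidal of $G_d$, $m=dl$. First, if $D$ has degree $l$, the transfer of $\D$ to $GL(d,D)$ is a discrete series but is cuspidal essentially only when $d=1$: already for $d=l=2$ the transfer of $St(\rho,2)$ to $GL(2,D)$, $D$ quaternion, is the generalized Steinberg built from the transfer of $\rho$, not a cuspidal. Second, if you instead take $\deg D=m$, so that the transfer $\rho_D$ of $\D$ to $D^\times$ is cuspidal, then the reducibility point of $\nu^{x}\rho_D\times\rho_D$ is at $x=\pm l$ (the invariant $s(\rho_D)$ equals $l$), so the two transferred pieces, which differ by a twist by $\nu^{1}$, are \emph{unlinked} for $l\geq 2$: the naive transferred product is irreducible and the ``elementary theory of two linked segments'' gives nothing. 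Third, and decisively, $LJ$ is only a morphism of Grothendieck groups and does not preserve length: constituents induced from Levi subgroups that do not transfer are sent to zero. For instance, in the $k=1$ analogue with $l\geq 2$, the constituent $\D_-\times\D_+$ of $\nu_K^{1/2}\D\times\nu_K^{-1/2}\D$ is induced from the Levi $(d(l-1),d(l+1))$, whose blocks are not multiples of $m$, and its transfer vanishes, while the full induced representation transfers to an irreducible one. So ``transporting the composition series back through the correspondence, which preserves length'' is not a legitimate move; making a JL proof honest requires precisely the nontrivial transfer-of-unitary-representations results of Badulescu(--Renard), not elementary segment combinatorics. Your fallback suggestions (a Kazhdan--Lusztig computation in which ``the relevant Bruhat interval has length one'', or ``the finer Jacquet-module analysis of \cite{T87}'') are assertions of the desired upper bound on the length, not arguments for it. As it stands, the proof establishes that the two named representations occur as submodule and quotient, but not that the length is two.
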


Finally, we recall the formula which gives the highest shifted derivative of a Speh representation, from Section 6.1 of \cite{T86-2} 
(see (3.3) of \cite{OS08} for the proof). 

\begin{prop}\label{highderspeh}
Let $m>0$ and $k>1$ be two integers, and let $\D$ be a segment of $G_m$. The highest shifted derivative of the representation $u(\D,k)$ 
is equal to $u(\D,k)^{[m]}=u(\D,k-1)$. The highest (shifted) derivative of $\D$ is equal to $\1_{G_0}$.
\end{prop}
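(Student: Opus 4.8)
The plan is to prove the two assertions separately: the statement about a segment is immediate, and the one about $u(\D,k)$ I would prove by induction on $k$.

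If $\D$ is a segment of $G_m$ it is a discrete series, hence generic, so by \cite{BZ77} the derivative $\D^{(m)}$ of maximal possible index is nonzero; being a nonzero irreducible (\cite{Z80}, Section $8$) object of $Alg(G_0)$ it equals $\1_{G_0}$, and as no derivative of index larger than $m$ exists one gets $\D^{[-]}=\D^{[m]}=\nu_K^{1/2}\D^{(m)}=\1_{G_0}$, the twist being trivial on $G_0$.

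For $u(\D,k)$ with $k>1$ I would induct on $k$, the case $k=1$ being the previous paragraph with the convention $u(\D,0)=\1_{G_0}$. Applying Proposition \ref{lquotient}, Definition \ref{Speh} and passing to smooth contragredients, $u(\D,k)$ is the unique irreducible submodule of $\nu_K^{(1-k)/2}\D\times \nu_K^{(3-k)/2}\D\times\cdots\times \nu_K^{(k-1)/2}\D$; since by the same argument $\nu_K^{1/2}u(\D,k-1)$ embeds in $\nu_K^{(3-k)/2}\D\times\cdots\times\nu_K^{(k-1)/2}\D$, and since a nonzero submodule of a module with simple socle must contain that socle, one obtains an embedding
$$u(\D,k)\hookrightarrow \nu_K^{(1-k)/2}\D\times \nu_K^{1/2}u(\D,k-1).$$
By exactness of the derivative functors, $u(\D,k)^{(j)}$ embeds in $(\nu_K^{(1-k)/2}\D\times \nu_K^{1/2}u(\D,k-1))^{(j)}$, which by the filtration of derivatives of a parabolic induction (Section $4$ of \cite{BZ77}, the filtered version of the result behind Lemma \ref{highderproduct}) carries a finite filtration whose graded pieces are built from $\nu_K^{(1-k)/2}\D^{(i)}$ and $\nu_K^{1/2}u(\D,k-1)^{(j-i)}$ with $0\le i\le j$. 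Since $\D$ lives on $G_m$ and, by the induction hypothesis, $u(\D,k-1)^{(j')}=0$ for $j'>m$, a nonzero graded piece forces $0\le i\le m$ and $0\le j-i\le m$, and $\D^{(i)}\neq 0$ forces $i$ to be a multiple of $d$, where $\D$ is built on a cuspidal representation of $G_d$.

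The crux is to improve the resulting bound $j\le 2m$ to the vanishing $u(\D,k)^{(j)}=0$ for $j>m$ and then to identify $u(\D,k)^{(m)}$; I would do both using the unitarity of $u(\D,k)$ through Proposition \ref{criterionbernstein}. If $h>m$ were the largest index with $u(\D,k)^{(h)}\neq 0$, then $u(\D,k)^{(h)}$ is irreducible, embeds in a graded piece with $i\ge h-m\ge 1$ and $h-i\ge h-m\ge 1$, and $u(\D,k)^{[h]}=\nu_K^{1/2}u(\D,k)^{(h)}$ is unitary; but one checks that $\nu_K^{1/2}$ times that graded piece has only irreducible subquotients of nonzero real-part central character — this follows from the induction hypothesis, from Proposition \ref{criterionbernstein} applied to the unitary $u(\D,k-1)$ (its shifted derivatives of index in $(0,m)$ have subquotients with positive real-part central character), and, in the one regime where the real parts numerically cancel, from the symmetry of the supercuspidal support forced by the Hermitian symmetry $u(\D,k)\cong\overline{u(\D,k)^\vee}$: removing from the top of the rectangle supporting $u(\D,k)$ more than one full row leaves a support too lopsided to be a twist of a unitary one. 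This contradiction gives $u(\D,k)^{(j)}=0$ for $j>m$ (for $k=2$ the case $j=2m$ instead uses that $u(\D,2)=L(\nu_K^{1/2}\D,\nu_K^{-1/2}\D)$ is the Langlands quotient of a reducible induced representation of Langlands type, hence non-generic). The highest index is exactly $m$, which drops out of the identification: $u(\D,k)^{(m)}$ is irreducible and embeds in $\nu_K^{(1-k)/2}\D^{(i)}\times\nu_K^{1/2}u(\D,k-1)^{(m-i)}$; the pieces with $i>0$ are excluded just as above, so $i=0$, and using the induction hypothesis $u(\D,k-1)^{(m)}=\nu_K^{-1/2}u(\D,k-2)$ one gets
$$u(\D,k)^{(m)}\hookrightarrow \nu_K^{(1-k)/2}\D\times\nu_K^{1/2}u(\D,k-1)^{(m)}=\nu_K^{(1-k)/2}\D\times u(\D,k-2).$$
Up to the twist $\nu_K^{-1/2}$ this last representation is the parabolic induction of the segments of $u(\D,k-1)$ taken in increasing order of central character, so it has simple socle $\nu_K^{-1/2}u(\D,k-1)$; the nonzero irreducible submodule $u(\D,k)^{(m)}$ therefore equals $\nu_K^{-1/2}u(\D,k-1)$, whence $u(\D,k)^{[m]}=u(\D,k-1)$ and this is the highest shifted derivative.

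The main obstacle is this elimination argument: showing that among all graded pieces of the Bernstein--Zelevinsky filtration of index at least $m$, only the one at index exactly $m$ with $i=0$ can contain the (irreducible, and after the shift unitary) highest derivative. The central-character computation settles this at once except in a boundary regime — roughly, when the length of $\D$ dominates $k$ — where the finer supercuspidal-support obstruction is needed. An alternative that avoids the case distinction, and also disposes cleanly of the non-vanishing $u(\D,k)^{(m)}\neq 0$ and of the precise level $m$, is to input Tadic's explicit formula for the Jacquet module of $u(\D,k)$ along the parabolic with Levi $G_{(k-1)m}\times G_m$ (the Hopf-algebra formula of \cite{T86-2}): among its constituents $\sigma\otimes\tau$ with $\tau$ a representation of $G_m$, exactly one has $\tau$ generic, and one reads off $\tau=\D$ up to normalisation and $\sigma=\nu_K^{-1/2}u(\D,k-1)$; equivalently one may run the Moeglin--Waldspurger algorithm for the derivatives of the Langlands quotient directly on the rectangle underlying $u(\D,k)$, which peels off exactly one row.
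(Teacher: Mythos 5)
Note first that the paper does not prove this proposition: it is recalled from Section 6.1 of \cite{T86-2}, with the proof referenced to (3.3) of \cite{OS08}, so your proposal has to be judged on its own terms rather than against an argument in the text. Your architecture is sound and most steps are correctly justified: the embedding $u(\D,k)\hookrightarrow \nu_K^{(1-k)/2}\D\times\nu_K^{1/2}u(\D,k-1)$ does follow as you say (the full increasing product has simple socle $u(\D,k)$ by dualising the Langlands quotient of the standard module, and the displayed induced submodule is nonzero, hence contains that socle), the Leibniz filtration of \cite{BZ77} combined with exactness of the derivative functors is the right tool, the simple-socle identification of $u(\D,k)^{(m)}$ at the end is fine, and the $k=2$, $j=2m$ corner is correctly eliminated by non-genericity of the Langlands quotient of a reducible standard module.

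The genuine gap is exactly where you flag it, and your substitute argument does not fill it. Your only quantitative inputs are the induction hypothesis and Proposition \ref{criterionbernstein} applied to the second factor $u(\D,k-1)$; these control the central character of $u(\D,k-1)^{(j-i)}$ from below but say nothing sharp about $\D^{(i)}$, and for small $i>0$ relative to the length of $\D$ the real parts are indeed not forced apart -- this is your ``boundary regime'', and the sentence about the Hermitian symmetry making the remaining support ``too lopsided'' is an assertion, not a proof: as written nothing excludes a subquotient of real part zero there. The missing ingredient is the explicit derivative formula for segments (Section 9 of \cite{Z80}): after twisting so that $\D=St(\rho,l)$ is unitary, with $\rho$ cuspidal on $G_d$ and $m=ld$ (a reduction you should state, since the proposition allows arbitrary segments and Proposition \ref{criterionbernstein} needs unitarity), one has $\D^{(jd)}=\nu_K^{j/2}St(\rho,l-j)$ and all other derivatives vanish, i.e.\ derivatives shorten a segment at its low-exponent end. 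Feeding this exact value for the first factor into your central-character count, the real part of any irreducible subquotient of the shifted graded piece with parameters $i=jd>0$ and total index $h\geq m$ is at least $\tfrac{d\,j\,(k+l-j-2)}{2}$, strictly positive except precisely at your $k=2$, $h=2m$ corner, and every piece with $h<m$ has strictly positive real part, which pins the highest index at exactly $m$. With this single standard input your argument closes completely and no supercuspidal-support symmetry is needed; without it, the elimination step remains open. Your fallback -- invoking Tadic's Jacquet module/derivative formula -- is of course legitimate, but it amounts to citing \cite{T86-2} and \cite{OS08}, which is what the paper itself does.
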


\subsection{Distinguished representations of GL(n)}\label{sectiondist}

In this paragraph, we recall results from \cite{M11}. First, we introduce some notations and definitions. 

\begin{df}
Let $G$ be a closed subroup of $G_n$, $H$ a closed subgroup of $G$, and $\chi$ a character of $H$, we say that a representation $\pi$ in $Alg(G)$ is 
$(H,\chi)$-distinguished if the space $Hom_H(\pi,\chi)$ is nonzero. If $H$ is clear, we say $\chi$-distinguished instead of $(H,\chi)$-distinguished, 
and if $\chi$ is trivial, we say $H$-distinguished (or distinguished if $H$ is clear). If $G=G_n$, and $H=G_n^\sigma$, we will sometimes 
say $(\sigma,\chi)$-distinguished instead of $(H,\chi)$-distinguished, and if $\chi$ is trivial, we will simply say $\sigma$-distinguished. 
\end{df}

We recall the following general facts from \cite{F91}, about $\sigma$-distinguished representations of $G_n$. We denote by $\pi^\sigma$ the representation 
$g\mapsto \pi(g^\sigma)$ for $\pi$ a representation of $G_n$.

\begin{prop}\label{autodualmult1}
Let $n\geq 1$ be an integer, and $\pi$ be an irreducible representation of $G_n$. If $\pi$ is $\sigma$-distinguished, then $\pi^\vee=\pi^\sigma$, 
and $Hom_{G_n^\sigma}(\pi,\1)$ is of dimension $1$.
\end{prop}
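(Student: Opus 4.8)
The proposition collects two theorems of Flicker~\cite{F91}, and the natural route to both is the Gelfand--Kazhdan method for the symmetric pair $(G_n,H)$ with $H=G_n^\sigma$. The relevant anti-involution is $\theta\colon g\mapsto\sigma(g)^{-1}=(g^{-1})^\sigma$: it is an involutive, Haar-measure-preserving anti-automorphism of $G_n$ restricting to inversion on $H$, so $\theta(H)=H$. The whole weight of the argument is carried by the following.
\begin{claim}
Every distribution on $G_n$ invariant under left and right translation by $H$ is fixed by $\theta$.
\end{claim}
\noindent This is the \textbf{main obstacle}. One would prove it by Bernstein's localisation principle over the symmetric variety $H\backslash G_n$: the map $g\mapsto{}^t\!g^{\sigma}g$ realises $H\backslash G_n$, $G_n$-equivariantly and up to a splitting into finitely many $G_n$-orbits, as a space of non-degenerate hermitian forms on $K^{n}$ with $G_n$ acting by base change, and over the $p$-adic field $F$ there are only finitely many such orbits (classified by the rank and a discriminant invariant). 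One stratifies by these orbits, checks that each stratum is $\theta$-stable with stabiliser a (possibly non-quasi-split) unitary group, and runs the descending induction of~\cite{BZ77}; the inductive step amounts to excluding equivariant distributions supported transversally to an orbit, and this geometric analysis is where all the work lies.

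Granting the claim, let $\pi$ be irreducible and $\sigma$-distinguished, and fix $0\neq\lambda\in Hom_H(\pi,\1)$. First I would check that $\pi^{\vee}$ is $\sigma$-distinguished: the matrix-coefficient map $v\mapsto\bigl(g\mapsto\lambda(\pi(g)v)\bigr)$ embeds $\pi$ into $C^\infty(H\backslash G_n)$, and composing with the homeomorphism $Hg\mapsto H({}^t\!g^{\sigma})^{-1}$ of $H\backslash G_n$ (well defined since the transpose preserves $GL(n,F)$) and with the Gelfand--Kazhdan self-duality $\pi^{\vee}\cong\pi\circ(g\mapsto{}^t\!g^{-1})$ of $GL(n,K)$ shows that $(\pi^{\sigma})^{\vee}$, hence $\pi^{\vee}$ (which restricts to $H$ in the same way, $\sigma$ being trivial there), is $\sigma$-distinguished; pick $0\neq\mu\in Hom_H(\pi^{\vee},\1)$. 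Then the Bessel distribution $T_{\lambda,\mu}\colon f\mapsto\langle\lambda\otimes\mu,\pi(f)\rangle$ on $G_n$ is non-zero and $H$-bi-invariant, hence $\theta$-fixed. A direct computation, using $\pi(f\circ\theta)=\pi^{\sigma}(\check{f})$ with $\check{f}(g)=f(g^{-1})$, identifies $\theta_{*}T_{\lambda,\mu}$ with the Bessel distribution of $(\pi^{\sigma})^{\vee}$ attached to $(\mu,\lambda)$. Since a non-zero Bessel distribution of an irreducible representation lies in, and is non-zero in, the corresponding block of the Bernstein decomposition of $\mathcal{D}(G_n)$, it determines that representation; hence $\pi\cong(\pi^{\sigma})^{\vee}$, i.e. $\pi^{\vee}\cong\pi^{\sigma}$.

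For one-dimensionality, the claim says $\theta_{*}$ acts as the identity on $H$-bi-invariant distributions. The bilinear, non-degenerate assignment $(\lambda,\mu)\mapsto T_{\lambda,\mu}$ then gives a $\theta$-equivariant injection $Hom_H(\pi,\1)\otimes Hom_H(\pi^{\vee},\1)\hookrightarrow\mathcal{D}(G_n)^{H\times H}$, where — after identifying $Hom_H(\pi^{\vee},\1)$ with $Hom_H(\pi,\1)$ through $\pi^{\vee}\cong\pi^{\sigma}$ (and $Hom_H(\pi^{\sigma},\1)=Hom_H(\pi,\1)$, $\sigma$ being trivial on $H$) — the source carries the flip, possibly composed with $-1$. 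Triviality of $\theta_{*}$ forces this involution to be the identity, which makes either the alternating or the symmetric square of $Hom_H(\pi,\1)$ vanish; therefore $\dim Hom_H(\pi,\1)\leq 1$.
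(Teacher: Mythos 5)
You are reconstructing Flicker's proof here: the paper itself does not prove Proposition \ref{autodualmult1} but simply quotes it from \cite{F91}, so the benchmark is Flicker's argument, and your overall architecture does match it -- a Gelfand--Kazhdan argument with the anti-involution $\theta(g)=\sigma(g)^{-1}$, Bessel distributions $T_{\lambda,\mu}$, the identity $\pi(f\circ\theta)=\pi^{\sigma}(\check f)$, and the observation that the associated automorphism $g\mapsto\theta(g)^{-1}=\sigma(g)$ is what produces $\pi^{\vee}\cong\pi^{\sigma}$ (an involution such as $g\mapsto\sigma({}^t g)$ would only give $\pi\cong\pi^{\sigma}$, so your choice of $\theta$ is the right one). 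The formal deductions you draw from the Claim are essentially sound, apart from one imprecision: ``a nonzero Bessel distribution determines the representation'' needs the standard separation statement for inequivalent irreducible representations (one can choose $f$ acting by a nonzero finite-rank operator in $\pi$ and by zero in $\pi'$ when $\pi\not\cong\pi'$); membership in a Bernstein block is not enough, since inequivalent irreducibles can share a block.

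The genuine gap is the Claim itself, which you rightly say carries the whole weight of the proof but do not prove, and the route you sketch for it would fail as written. The map $g\mapsto {}^t g^{\sigma}g$ is invariant under neither left nor right translation by $G_n^{\sigma}$; its image is the $G_n$-orbit of the identity hermitian form, whose stabiliser is the quasi-split unitary group, so it realises $U(n,K/F)\backslash G_n$ (the symmetric variety relevant to unitary periods), not $G_n^{\sigma}\backslash G_n$, and the ``finitely many orbits of hermitian forms with unitary stabilisers'' stratification is therefore the wrong geometry for this pair. For $(G_n,G_n^{\sigma})$ the correct symmetrisation is $g\mapsto g\,\sigma(g)^{-1}$, which by Hilbert 90 identifies $G_n^{\sigma}\backslash G_n$ with the single $G_n$-orbit $S=\{x\in G_n : x\,\sigma(x)=1\}$, double cosets going over to $G_n^{\sigma}$-conjugacy classes in $S$; the substance of Flicker's theorem is precisely the analysis of $G_n^{\sigma}$-conjugation-invariant distributions on $S$ -- showing that the relevant classes are stable under the involution induced by $\theta$ (generically, via a comparison of these twisted classes with conjugacy classes and their centralisers, which are products of general linear groups over $F$ and $K$, not unitary groups) and excluding contributions supported on the degenerate classes. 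Until that analysis is supplied in the correct model, the Claim, and with it both assertions of the proposition, remains unestablished.
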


We now introduce the class of $\sigma$-induced irreducible unitary representations of $G_n$. They will turn out to be the $\sigma$-distinguished 
irreducible 
unitary representations of $G_n$.

\begin{df}
For $n\geq 1$, let $\pi$ be an irreducible unitary representation 
$$\pi=u(\D_1,k_1)\times \dots \times u(\D_s,k_s)\times \pi(u(\D_{s+1},k_{s+1}),\alpha_{s+1})\times \dots \times \pi(u(\D_{t},k_{t}),\alpha_t)$$ of $G_n$, 
with unitary segments $\D_i$, positive integers $k_i$, and $\alpha_i\in (0,1/2)$. The representation $\pi$ is said to be $\sigma$-induced, if it satisfies $\pi^\vee=\pi^\sigma$, and if for 
every $i\leq s$, such that $u(\D_i,k_i)$ occurs with odd multiplicity in the product $\pi$, the segment $\D_i$ is $\sigma$-distinguished.
\end{df}

\begin{rem}\label{onlyrem}
Maybe the preceding definition is not completely transparent to the reader. Let us try to explain how $\sigma$-induced irreducible unitary representations 
look like. Let 
$$\pi=u(\D_1,k_1)\times \dots \times u(\D_t,k_s)\times 
\pi(u(\D_{s+1},k_{s+1}),\alpha_{k_{s+1}})\times \dots \times \pi(u(\D_{t},k_{t}),\alpha_{k_{t}})$$
be an irreducible unitary representation of $G_n$.  First, if one has $\pi^\vee=\pi^\sigma$ (call this relation $\sigma$-self-duality), 
then it means the two following things:
\begin{itemize}
\item[a)] for $i$ between $1$ and $s$, either $u(\D_i,k_i)$ is $\sigma$-self-dual, or if this relation is not satisfied, 
there is $j\neq i$ between $1$ and $s$, such that $u(\D_j,k_j)^\vee= u(\D_i,k_i)^\sigma$.
\item[b)] for $i$ between $s+1$ and $t$, either $\pi(u(\D_i,k_i),\alpha_i)$ is $\sigma$-self-dual, or if this relation is not satisfied, 
there is $j\neq i$ between $s+1$ and $t$, such that $\pi(u(\D_j,k_j),\alpha_j)^\vee= \pi(u(\D_i,k_i),\alpha_i)^\sigma$.
\end{itemize}
In a) above, if you have $u(\D_i,k_i)^\vee= u(\D_i,k_i)^\sigma$ which occurs with multiplicity $\geq 2$, i.e. if there is $j\neq i$ between $1$ 
and $s$ such that $u(\D_j,k_j)=u(\D_i,k_i)$, then one has $u(\D_j,k_j)^\vee=u(\D_i,k_i)^\sigma$. Hence a) can also be stated 
as:\\
a') $u(\D_1,k_1)\times \dots \times u(\D_s,k_s)$ is a product of representations of the form $u(\D_i,k_i)\times (u(\D_i,k_i)^\vee)^\sigma$, and 
of $\sigma$-self dual representations $u(\D_j,k_j)$ which occur with odd multiplicity.\\
Now in b), if $\pi(u(\D_i,k_i),\alpha_i)$ is $\sigma$-self dual, it is equal to 
$\nu_K^{\alpha_i} u(\D_i,k_i) \times ((\nu_K^{\alpha_i} u(\D_i,k_i))^\vee)^\sigma$ (because $\D_i^\vee$ must be equal to $\D_i^\sigma$). All in all, $\pi$ is $\sigma$-self dual if and only if it is a product of representations of the form 
$$\nu_K^{\alpha} u(\D,k) \times ((\nu_K^{\alpha} u(\D,k))^\vee)^\sigma$$ for $0\leq \alpha <1/2$, $\D$ a discrete series and $k$ a positive integer (we allow here $\alpha$ to be equal to 
zero, in order to take in account representations $u(\D_i,k_i)\times (u(\D_i,k_i)^\vee)^\sigma$ occuring in a')), of representations of the form
$$\pi(u(\D,k),\alpha)\times (\pi(u(\D,k),\alpha)^\vee)^\sigma$$ for $\alpha$ in $(0,1/2)$ and $\D$ and $k$ as above, and of representations of the form $u(\D',k')$ ($\D'$ unitary, and $k'>0$) occuring with odd multiplicity, and which are $\sigma$-self dual. 
In this situation, $\pi$ is $\sigma$-induced if and only if these representations $u(\D',k')$ are such that $\D'$ is $\sigma$-distinguished.
\end{rem}

Theorem 5.2. of \cite{M11} then classifies distinguished generic representations.

\begin{thm}\label{distgen}
For $n\geq 1$, a generic unitary representation of $G_n$ is $\sigma$-distinguished if and only if it is $\sigma$-induced.
\end{thm}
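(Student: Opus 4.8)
The plan is to prove the two implications of Theorem~\ref{distgen} separately, using in both directions that a generic unitary representation is a \emph{full} parabolically induced representation from a product of (shifted) discrete series: all the integers $k_i$ of Theorem~\ref{tadicclassif} equal $1$, so Mackey theory for the restriction along $G_n^\sigma=GL(n,F)$ applies directly, and one avoids the derivative techniques needed in the non-generic case.

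First I would prove that a $\sigma$-induced $\pi$ is $\sigma$-distinguished. Since the product $\pi=\pi_1\times\dots\times\pi_t$ is irreducible, it is isomorphic to the product of the same factors taken in any order, so by Remark~\ref{onlyrem} we may write
$$\pi\;\cong\;\bigl(\tau_1\times(\tau_1^\vee)^\sigma\bigr)\times\dots\times\bigl(\tau_r\times(\tau_r^\vee)^\sigma\bigr)\times\D'_1\times\dots\times\D'_q,$$
with each $\tau_j$ a shifted discrete series and each $\D'_l$ a $\sigma$-self-dual, $\sigma$-distinguished discrete series. Realizing $\pi$ as $Ind_{P_{\bar{n}}}^{G_n}$ of the corresponding datum, restricting to $G_n^\sigma$, and decomposing $P_{\bar{n}}\backslash G_n$ into $G_n^\sigma$-orbits as in \cite{BZ77}, a standard open-orbit argument reduces $\sigma$-distinction of $\pi$ to finding an orbit $O$ such that the inducing datum, restricted to the stabilizer of $O$, is distinguished by the relevant modulus character. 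One takes $O$ to be the orbit in which each pair of blocks $\{\tau_j,(\tau_j^\vee)^\sigma\}$ is exchanged by the Galois action while each $\D'_l$-block is defined over $F$; its stabilizer is $\prod_j G_{a_j}\times\prod_l G_{m_l}^\sigma$ (the copies of $G_{a_j}$ embedded diagonally over $K$), and after a modulus-character computation that closes up because the exponents occurring in $\pi$ are balanced and the $\D'_l$ are unitary, the required distinction of the datum factors as $\bigotimes_j Hom_{G_{a_j}}(\tau_j\otimes\tau_j^\vee,\1)\otimes\bigotimes_l Hom_{G_{m_l}^\sigma}(\D'_l,\1)$. The factors of the first kind are nonzero thanks to the canonical invariant pairing $\tau_j\otimes\tau_j^\vee\to\C$ (carried over through $\sigma$), and those of the second kind are nonzero by hypothesis.

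For the converse, assume $\pi$ is $\sigma$-distinguished. Proposition~\ref{autodualmult1} gives $\pi^\vee\cong\pi^\sigma$, which by the uniqueness part of Theorem~\ref{tadicclassif} forces the multiset of factors of $\pi$ to be stable under $\tau\mapsto(\tau^\vee)^\sigma$; thus, by Remark~\ref{onlyrem}, $\pi$ has (up to reordering) the shape displayed above with the $\D'_l$ $\sigma$-self-dual, and it remains to prove that each such $\D'_l$ is $\sigma$-distinguished. Here I would run Mackey theory in reverse: since $Hom_{G_n^\sigma}(\pi,\1)\neq 0$ and $Hom$ is left exact, some graded piece of the orbit filtration of $\pi|_{G_n^\sigma}$ carries a nonzero invariant functional, so the inducing datum restricted to the stabilizer of some orbit $O$ is distinguished by the corresponding modulus character. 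I would then enumerate the orbits and use the uniqueness in Theorem~\ref{tadicclassif}, together with the fact that the central characters of the segments of a generic unitary representation have real part in $(-1/2,1/2)$ (so that no cancellation between the central characters of two distinct factors occurs at the relevant place), to show that a contributing orbit is essentially the one used above: the paired blocks $\tau_j\times(\tau_j^\vee)^\sigma$ contribute unconditionally, and the only surviving constraint is that each unpaired $\sigma$-self-dual factor $\D'_l$, viewed as a tensor factor of a Jacquet module of $\pi$ along a $\sigma$-stable parabolic, be distinguished, the possible twist being pinned to the trivial character since $\D'_l$ is unitary and $\sigma$-self-dual.

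The main obstacle is precisely this last step of the converse. One must rule out every $G_n^\sigma$-orbit on $P_{\bar{n}}\backslash G_n$ that would support an invariant functional without forcing the odd-multiplicity self-dual factors to be individually $\sigma$-distinguished, i.e.\ exclude ``mixed'' contributions in which distinction of $\pi$ is explained by an interaction between several factors rather than by distinction of each $\D'_l$. Carrying this out rigorously requires combining the uniqueness in Tadic's classification, the order-independence coming from irreducibility, the exponent constraints from unitarity, and a delicate bookkeeping of modulus characters for the symmetric space $G_n^\sigma\backslash G_n$; the rest is formal Mackey theory and the canonical pairing.
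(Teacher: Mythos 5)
You are attempting to reprove a statement that this paper does not prove at all: Theorem \ref{distgen} is quoted from Theorem 5.2 of \cite{M11}, so the only fair comparison is with the orbit-theoretic arguments of that reference and with the analogous steps the present paper does carry out (Proposition \ref{sigmaisdist}). Measured against those, your sketch has a genuine gap in each direction. In the ``$\sigma$-induced $\Rightarrow$ distinguished'' direction, the asserted reduction --- ``distinction of the inducing datum on the stabilizer of some orbit implies distinction of $\pi$'' --- is not valid. In the Mackey filtration of $(\pi_1\times\dots\times\pi_t)|_{G_n^\sigma}$ the orbit pieces are only subquotients; an invariant functional on a graded piece extends automatically to $\pi$ only when that piece is a quotient, i.e.\ for the closed orbit. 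The orbit you select (paired blocks swapped by the Galois action, self-dual blocks rational over $F$) is in general neither closed nor open, and even for the open orbit one faces the classical extension problem for the functional defined on the compactly supported functions on that orbit. This is precisely why the paper's Proposition \ref{sigmaisdist} does not argue this way: it quotes Blanc--Delorme \cite{BD08} to distinguish the factors $\tau\times(\tau^\vee)^\sigma$ and Flicker's Proposition 26 of \cite{F92} (a closed-orbit integration) to pass to products of distinguished representations. Without an input of this kind your sufficiency argument does not go through.

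The converse direction is announced rather than proved. The step you call ``the main obstacle'' --- ruling out every orbit that could support an invariant functional without forcing each odd-multiplicity $\sigma$-self-dual factor $\D'_l$ to be distinguished --- is the entire content of the theorem, and exponent bounds plus the uniqueness in Theorem \ref{tadicclassif} do not suffice for it. A mixed orbit typically imposes relations of the form $\D_i^\vee\cong\nu^{s}\D_j^\sigma$ between distinct factors, or $\eta$-type distinction conditions on a single factor, and excluding these requires the precise stabilizers and modulus characters together with arithmetic input such as Proposition \ref{discrdist} and the dichotomy of Corollary \ref{1sur2} (a discrete series cannot be simultaneously distinguished and $\eta$-distinguished, \cite{AKT04}); nothing in your sketch engages with this. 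So while your roadmap is close in spirit to the proof in \cite{M11}, both halves stop exactly where the real work begins, and as it stands the proposal does not constitute a proof.
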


We also recall Corollary 3.1 of \cite{M09} about distinction of discrete series.

\begin{prop}\label{discrdist}
Let $\rho$ be a cuspidal representation of $G_r$ for $r\geq 1$, and $\D=St(\rho,l)$ for $l\geq 1$. The segment $\D$ of $G_{lr}$ is $\sigma$-distinguished 
if and only if $\rho$ is $(\sigma,\eta^{l-1})$-distinguished.
\end{prop}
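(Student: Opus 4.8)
The plan is to reduce the statement to the analogous known result about distinction of $St(\rho,l)$ for $\rho$ \emph{unitary} cuspidal; in fact Proposition \ref{discrdist} as stated already concerns a unitary segment $\D$, so $\rho$ is (unitary up to an unramified twist, which one can normalise away since distinction forces the central character to be $\sigma$-trivial, hence the unramified twist is itself $\sigma$-trivial and absorbed). The standard approach, which I would follow, is the one via the ``mirabolic'' restriction and the hereditary property of distinction along parabolic induction together with a vanishing result. Concretely, recall that $\D=St(\rho,l)=\D(\rho,(l-1)/2,-(l-1)/2)$ is the unique irreducible \emph{sub}module of $\nu_K^{-(l-1)/2}\rho\times\cdots\times\nu_K^{(l-1)/2}\rho$, and dually the unique irreducible quotient of $\nu_K^{(l-1)/2}\rho\times\cdots\times\nu_K^{-(l-1)/2}\rho$; the quotient in the second ordering is exactly the Speh-type object, but here we keep the segment. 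The key input is that $St(\rho,l)$ is the unique irreducible generic subquotient of $\nu_K^{(l-1)/2}\rho\times\cdots\times\nu_K^{-(l-1)/2}\rho$.

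First I would establish the ``only if'' direction. Suppose $St(\rho,l)$ is $\sigma$-distinguished. By the geometric lemma / Mackey theory for the $G_n^\sigma$-orbits on $P_{\bar n}\backslash G_n$ (with $\bar n=(r,\dots,r)$, $l$ blocks), a nonzero $G_n^\sigma$-invariant functional on $\nu_K^{(l-1)/2}\rho\times\cdots\times\nu_K^{-(l-1)/2}\rho$ must factor through one of the orbits, and a standard support/positivity argument — using that the exponents $\nu_K^{j}$ with $j=(l-1)/2,\dots,-(l-1)/2$ are \emph{strictly decreasing} and that on a non-closed orbit one would need a $\nu_K^{a}\rho$ paired with $(\nu_K^{b}\rho)^{\vee,\sigma}$ forcing $a+b=0$ and an intermediate twist to be $\eta$-distinguished — pins down which orbits can contribute. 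Since $St(\rho,l)$ sits as the \emph{generic} constituent, one then transfers distinction of the full induced representation to distinction of $St(\rho,l)$ and reads off that $\rho$ must be $(\sigma,\eta^{l-1})$-distinguished. (This is precisely the computation carried out in \cite{M09}; I would cite the orbit analysis there rather than redo it.)

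For the ``if'' direction, assume $\rho$ is $(\sigma,\eta^{l-1})$-distinguished. The natural strategy is induction on $l$. For $l=1$ this is the hypothesis. For the inductive step, use the exact sequence relating $St(\rho,l)$ to products of shorter Steinbergs: $St(\rho,l)$ embeds in $\nu_K^{1/2}St(\rho,l-1)\times\nu_K^{-(l-1)/2}\rho$ (or a suitable reordering), and one shows, via the Mackey filtration for $G_n^\sigma$-invariant forms, that the only orbit able to support a nonzero functional on this two-step induced space is the open one, on which the contribution is $Hom_{G_{(l-1)r}^\sigma\times G_r^\sigma}\!\big(\nu_K^{1/2}St(\rho,l-1)\otimes\nu_K^{-(l-1)/2}\rho,\ \eta^{?}\otimes\eta^{?}\big)$ after accounting for the modulus characters; the bookkeeping of $\eta$-powers is arranged exactly so that distinction of $St(\rho,l-1)$ by $\eta^{l-2}$ (inductive hypothesis) and of $\rho$ by $\eta^{l-1}$ combine to give distinction of $St(\rho,l)$, while the other (closed) orbits contribute zero because the twists there do not match, by the strict inequality of exponents. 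I would need Proposition \ref{autodualmult1} to know the relevant Hom-spaces are at most one-dimensional, which keeps the filtration argument clean (each graded piece is $0$ or $1$ dimensional, so an Euler-characteristic/left-exactness argument suffices).

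The main obstacle is the second direction: controlling the $G_n^\sigma$-module structure of the Jacquet-type filtration on the induced representation precisely enough to see that distinction does \emph{not} leak out of $St(\rho,l)$ into the other Langlands constituents, and to track the exact power of $\eta$ attached to each orbit. This is where the quadratic character $\eta$ genuinely enters: on an orbit indexed by a permutation with a fixed point paired against a swapped pair, the local factor is twisted by $\eta$, and getting the total power to be $\eta^{l-1}$ requires the combinatorial identity that a ``staircase'' segment of length $l$ accumulates exactly $l-1$ sign twists. I expect to invoke the orbit computations of \cite{M09} and \cite{M11} wholesale for this, so that the proof here is essentially: cite Proposition \ref{autodualmult1}, run the induction on $l$ using the embedding of $St(\rho,l)$ into a product of a shorter Steinberg and a cuspidal line, and apply the Mackey machinery with the $\eta$-bookkeeping to isolate the open orbit.
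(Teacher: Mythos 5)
The paper does not actually prove this proposition: it is recalled verbatim as Corollary 3.1 of \cite{M09}, so the only ``proof'' in the paper is a citation, and simply citing \cite{M09} (as you partly propose) would be consistent with what the author does. However, the local Mackey-theoretic argument you sketch as a substitute has two genuine gaps. For the ``only if'' direction, passing from $St(\rho,l)$ to the induced representation of which it is a quotient loses exactly the information you need: the orbit analysis of $\nu_K^{-(l-1)/2}\rho\times\cdots\times\nu_K^{(l-1)/2}\rho$ under $G_n^\sigma$ only produces conditions of the form $\rho^\vee\simeq\rho^\sigma$ (paired blocks with opposite exponents) or ``$\rho$ is distinguished'' (a fixed middle block when $l$ is odd); no power of $\eta$ can be read off at that level, and for $l$ even one cannot separate ``$\rho$ distinguished'' from ``$\rho$ $\eta$-distinguished'' this way. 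The implication ``the full induced representation is distinguished $\Rightarrow$ its Langlands quotient is distinguished'' that you invoke to come back to $St(\rho,l)$ also goes in the wrong direction.

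For the ``if'' direction, the mechanism you propose for producing $\eta^{l-1}$ does not exist in this symmetric space: the orbit contributions for $P_{\bar n}\backslash G_n/G_n^\sigma$ are untwisted distinction of the stable blocks and $\sigma$-duality pairings of swapped blocks, and the modulus-character corrections restrict to $F^*$ as powers of $\nu_F$ (since $\nu_K|_{F^*}=\nu_F^2$), never as $\eta$ --- this is visible in the statement of Theorem \ref{distgen}, where no $\eta$-twists occur. Concretely, your inductive step via $St(\rho,l)\hookrightarrow \nu_K^{1/2}St(\rho,l-1)\times\nu_K^{-(l-1)/2}\rho$ would require untwisted distinction of the two factors; already for $l=2$ this asks for $\rho$ distinguished while the hypothesis is that $\rho$ is $\eta$-distinguished, and by \cite{AKT04} (Corollary \ref{1sur2}) these are mutually exclusive. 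Finally, even granting an invariant functional on the induced representation, you would still have to show it is nonzero on the submodule $St(\rho,l)$ rather than only on the other constituent --- the ``leakage'' problem you name but do not resolve; note that for the analogous Speh statement the paper needs the end-of-complementary-series decomposition together with Corollary \ref{1sur2} and Proposition \ref{crux} precisely to control this. The known proofs of the present proposition rest on inputs of a different nature (poles of Asai/twisted tensor $L$-functions as in \cite{K04}, \cite{AKT04}, and the arguments of \cite{M09}), so the honest route here is the citation, not the sketched local induction.
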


Finally, Corollary 1.6 of \cite{AKT04} says that the segment $\D$ above cannot be $\sigma$-distinguished and 
$(\sigma,\eta)$-distinguished at the same time. This has the following immediate corollary.

\begin{cor}\label{1sur2}
Let $\D$ be a segment of $G_n$ for $n\geq 2$, then $\D$ is $\sigma$-distinguished if and only if $\D_+$ is $(\sigma,\eta)$-distinguished. In particular, 
if $\D$ is distinguished, $\D_+$ is not.
\end{cor}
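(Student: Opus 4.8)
The plan is to reduce everything to Proposition~\ref{discrdist} by means of a character twist. Write $\D=St(\rho,l)$ with $\rho$ a cuspidal representation of $G_r$ and $l\geq 1$, so that $n=lr$ and $\D_+=St(\rho,l+1)$. First I would fix a smooth character $\mu$ of $K^*$ whose restriction to $F^*$ is $\eta$ (such an extension exists since $F^*$ is a closed subgroup of $K^*$). For $g\in G_n^\sigma$ one has $\det(g)\in F^*$, so $\mu\circ\det$ restricts to $\eta\circ\det$ on $G_n^\sigma$, and since $\eta^2=1$, for any smooth representation $\tau$ of $G_n$ the spaces $Hom_{G_n^\sigma}(\tau,\eta\circ\det)$ and $Hom_{G_n^\sigma}(\tau\otimes(\mu\circ\det),\1)$ coincide; hence $\tau$ is $(\sigma,\eta)$-distinguished if and only if $\tau\otimes(\mu\circ\det)$ is $\sigma$-distinguished. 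Because twisting by $\mu\circ\det$ commutes with normalised parabolic induction and with the Zelevinsky construction of the segments, and preserves cuspidality, one has $\D_+\otimes(\mu\circ\det)=St(\rho\otimes(\mu\circ\det),l+1)$.

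Next I would apply Proposition~\ref{discrdist} twice. Applied to the segment $St(\rho\otimes(\mu\circ\det),l+1)$, it shows this segment is $\sigma$-distinguished exactly when $\rho\otimes(\mu\circ\det)$ is $(\sigma,\eta^{l})$-distinguished; untwisting by $\mu$ once more (the same computation as above, now at the level of $G_r^\sigma$, turning $\eta^l$ into $\eta^l\eta^{-1}=\eta^{l-1}$) this holds exactly when $\rho$ is $(\sigma,\eta^{l-1})$-distinguished. Combined with the preceding paragraph, $\D_+$ is $(\sigma,\eta)$-distinguished if and only if $\rho$ is $(\sigma,\eta^{l-1})$-distinguished. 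On the other hand, Proposition~\ref{discrdist} applied directly to $\D=St(\rho,l)$ says that $\D$ is $\sigma$-distinguished if and only if $\rho$ is $(\sigma,\eta^{l-1})$-distinguished. The two conditions therefore agree, which is the first assertion.

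The last assertion then follows at once: if $\D$ is distinguished, the equivalence just proved makes $\D_+$ be $(\sigma,\eta)$-distinguished, and Corollary~1.6 of \cite{AKT04} applied to the segment $\D_+$ forbids $\D_+$ from being $\sigma$-distinguished as well, so $\D_+$ is not distinguished. I do not expect a genuine obstacle here --- this is why the paper records it as an immediate corollary --- the only points deserving a word of justification being the existence of the character $\mu$ of $K^*$ extending $\eta$ and the compatibility of the twist $-\otimes(\mu\circ\det)$ with the formation of $St(\rho,l)$; granting these, the argument is a bookkeeping of $Hom$-spaces using $\eta^2=1$.
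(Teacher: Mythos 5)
Your argument is correct and is essentially the paper's own route: the paper records no separate proof, treating the corollary as immediate from Proposition \ref{discrdist} (whose $\eta$-twisted analogue is exactly what your twist by a character $\mu$ of $K^*$ extending $\eta$ supplies) together with Corollary 1.6 of \cite{AKT04} for the final assertion. Your write-up simply makes that standard bookkeeping explicit, and the two points you flag (existence of $\mu$, compatibility of twisting with the segment construction) are indeed the only things to check and are standard.
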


\section{Distinguished unitary representations}

We will first prove the convergence of integrals defining invariant linear forms.

\subsection{Asymptotics in the degenerate Kirillov model}\label{sectionasympt}

We denote by $N_{n,h}$ the group of matrices $h(a,n)=\begin{pmatrix} a & x \\ 0 & n \end{pmatrix}$, with $a$ in $G_{n-h}$, $n$ in $N_h$, and 
$x$ in $\mathcal{M}_{n-h,h}$.
It is proved in Section 5 of \cite{Z80}, that any irreducible representation $\pi$ of $G_n$, has a "degenerate Kirillov model" (which is 
just the standard Kirillov model in the non-degenerate case). This means that the restriction of $\pi$ to $P_n$, embeds as 
a unique $P_n$-submodule $K(\pi,\theta)$ of $(\hat{\Phi}^+)^{h-1}\Psi^+(\pi^{(h)})$, where $\pi^{(h)}=\pi^{-}$. The space $K(\pi,\theta)$ 
consists of smooth functions $W$ from $P_n$ to $V_{\pi^{(h)}}$, which are fixed under right translation by an open subgroup $U_W$, and satisfy 
the relation $$W(h(a,n)p)= |a|_K^{h/2}\theta(n)\pi^{(h)}(a)W(p)$$ for $h(a,n)$ in $N_{n,h}$, and $p$ in $P_n$. It can be handy to identify such a function with a map from $P_n$ to $V_{\pi^{[h]}}$, which satisfies the relation 
\begin{equation}\label{relkirillov} W(h(a,n)p)= |a|_K^{(h-1)/2}\theta(n)\pi^{[h]}(a)W(p)\end{equation} for $h(a,n)$ in $N_{n,h}$, and $p$ in $P_n$.\\

We now give an asymptotic expansion of the elements of $K(\pi,\theta)$, in terms of the exponents of $\pi$. The proof, which is omitted, is an 
easy adaptation of the proof of Theorem 2.1. in \cite{M11-2}. We write $\mathcal{C}_c^\infty(F,V)$ for the space of smooth functions with compact support from $F$ to a 
complex vector space $V$.

\begin{thm}\label{asympt}
Let $\pi$ be an irreducible representation of $G_n$, for $n\geq 2$. Let $\pi^{(h)}$ be the highest derivative 
of $\pi$, and let $W$ belong to $K(\pi,\theta)$. We suppose that we have $h\geq 2$, and we 
denote by $(c_{k,i_k})_{1\leq k \leq r_k}$ the family of central characters of the irreducible subquotients of $\pi^{(k)}$. In this situation, the restriction $W(z_{n-h+1}\dots z_{n-1})$ 
of $W$ to the torus $Z_{n-h+1}\dots Z_{n-1}$ is a linear combination of functions of the form 
$$\prod_{k=n-h+1}^{n-1} [c_{i_k,k}\d_{U_{k+1}}^{1/2}\dots \d_{U_{n}}^{1/2}](z_k) v_F(z_k)^{m_k}\phi_k(t(z_k))$$
 for $i_k$ between $1$ and $r_k$, non negative integers $m_k$, and functions $\phi_k$ in $\mathcal{C}_c^\infty(F,V_{\pi^{(h)}})$. 
\end{thm}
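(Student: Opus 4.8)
## Proof proposal for Theorem \ref{asympt}

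The plan is to adapt the argument of Theorem 2.1 of \cite{M11-2} to the degenerate Kirillov model, proceeding by a descending induction on the derivatives and keeping track of exponents via Jacquet modules. First I would use the defining relation \eqref{relkirillov} to reduce the study of $W$ on $Z_{n-h+1}\dots Z_{n-1}$ to the behaviour of $W$ as a function on the mirabolic group $P_n$, exploiting the structure $K(\pi,\theta)\hookrightarrow(\hat\Phi^+)^{h-1}\Psi^+(\pi^{(h)})$ given by Lemma \ref{filtr} and the discussion following it. The key point is that each application of $\Phi^-$ (equivalently, restriction to a smaller mirabolic and twisting) peels off one factor $Z_k$, and the way $W$ depends on the coordinate $t(z_k)$ is governed, after passing through the functors, by the Jacquet module of $\pi^{(k)}$ along the maximal parabolic of type $(k-1,1)$ inside $G_k$: this is exactly where the central characters $c_{i_k,k}$ of the subquotients of $\pi^{(k)}$ enter.

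The main steps, in order, would be: (i) Recall that by \cite{Z80} the derivative $\pi^{(k)}$ is a finite-length representation of $G_{n-k}$, so its Jacquet module along $P_{(1,n-k-1)}$ has a finite composition series whose central characters on the $G_1$-factor are precisely (twists of) the $c_{i_k,k}$; this produces the finite list indexed by $i_k$. (ii) Show by descending induction on $j$ (from $j=h-1$ down) that the restriction of $W$ to $Z_{n-h+1}\dots Z_{n-j}$ is a linear combination of products $\prod_k [c_{i_k,k}\,\delta_{U_{k+1}}^{1/2}\cdots\delta_{U_n}^{1/2}](z_k)\,v_F(z_k)^{m_k}\,\phi_k(t(z_k))$ with the tail variable still free; the inductive step studies the dependence on the single new variable $t(z_k)$ by applying $\Phi^-$ one more time and invoking the standard asymptotic expansion of a smooth vector in a Jacquet module (a function on $F^*$ that is, near $0$, a finite sum of $\chi(t)v_F(t)^m$ times a locally constant germ, and compactly supported away from $0$) — this is the Casselman-type asymptotics of \cite{Z80}/\cite{BZ77}. (iii) The normalising characters $\delta_{U_{k+1}}^{1/2}\cdots\delta_{U_n}^{1/2}$ appear by bookkeeping the $\delta_{U_k}^{1/2}$ twists in the functors $\Phi^{\pm},\Psi^{\pm}$ across the $h-1$ steps, matched against the $|a|_K^{(h-1)/2}$ factor in \eqref{relkirillov}; (iv) the integer $m_k$ bounded only by ``non-negative'' reflects that derivatives may introduce logarithmic (i.e.\ polynomial-in-valuation) terms when central characters collide, and the compact support of $\phi_k$ at infinity on $F$ (not just $F^*$) comes from the smoothness of $W$ together with the $U_n$-equivariance, exactly as in the non-degenerate Whittaker case.

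The hard part, and the reason the full proof is only sketched here, is controlling the \emph{interaction} between successive variables $t(z_k)$: a priori the germ expansion in the variable $z_k$ could have coefficients that themselves depend on $z_{k+1},\dots,z_{n-1}$ in a way that mixes the exponents, and one must check that after the normalisation the expansion genuinely factors as a sum of pure tensors $\prod_k(\text{function of }z_k)$ with the exponents $c_{i_k,k}$ attached coordinate-wise. This is handled by carrying out the induction on the nested mirabolic subgroups rather than all at once, so that at each stage one only ever expands in one new variable over a ring of smooth functions in the remaining ones, and then observing that the functors $\Phi^-$ commute appropriately with restriction to the previously-treated torus directions; this is precisely the technical heart of \cite{M11-2}, and the claim is that it transposes verbatim once $\pi^{(h)}$ is allowed to be an arbitrary (not necessarily one-dimensional) finite-length representation, since nothing in that argument used irreducibility or one-dimensionality of the bottom derivative.
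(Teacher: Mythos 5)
The paper gives no written proof of this theorem: it is stated with ``the proof, which is omitted, is an easy adaptation of the proof of Theorem 2.1 in \cite{M11-2}'', so your overall plan --- rerun the induction of \cite{M11-2} through the Bernstein--Zelevinsky filtration of $\pi_{|P_n}$, one torus variable at a time, with the bottom now the vector-valued $\pi^{(h)}$ rather than a one-dimensional module --- is exactly the route the paper intends, and your points (iii) and (iv) (bookkeeping of the $\delta_{U}^{1/2}$-twists, polynomial-in-valuation factors, compact support from smoothness and $\theta$-equivariance) are consistent with it.

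There is, however, a genuine flaw in the mechanism you propose for producing the exponents, which is the actual content of the statement. In step (i) you assert that the $c_{i_k,k}$ are the characters of the $G_1$-factor of the Jacquet module of $\pi^{(k)}$ along $P_{(1,n-k-1)}$, and in (ii) you invoke Casselman-type asymptotics of that Jacquet module. Those Jacquet-module exponents are in general \emph{not} the central characters of the irreducible subquotients of a derivative (for an irreducible principal series $\chi_1\times\chi_2$ of $G_2$ the former are twists of $\chi_1$ and $\chi_2$, the latter is $\chi_1\chi_2$), so your route, carried out literally, yields an expansion with the wrong set of exponents; it would prove a different (classical, Jacquet--Langlands--Shalika type) statement, and one that is useless downstream, since Proposition \ref{convint} needs precisely central characters of shifted derivatives in order to apply Bernstein's unitarisability criterion (Proposition \ref{criterionbernstein}). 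In the argument of \cite{M11-2} no such Jacquet module intervenes: the character attached to the variable $z_k$ comes from the action of the centre $Z_k$ of $G_k$ on the finite-length derivative of \emph{complementary} order sitting in the graded piece $(\Phi^+)^{n-k-1}\Psi^+(\pi^{(n-k)})$ of the filtration (note the indexing: it is $\pi^{(n-k)}$, a representation of $G_k$, that governs $z_k$; the ``$\pi^{(k)}$'' of the statement is an abuse, as $\pi^{(k)}=0$ for $k>h$). Finite length makes the space $Z_k$-finite, whence the terms $c(z_k)v_F(z_k)^{m_k}$, while the $\Phi^+$-steps and the $\theta$-equivariance give the compactly supported $\phi_k$; the intermediate variables are handled by induction using derivatives of derivatives. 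With this correction, the rest of your sketch (one new variable per step, nothing using irreducibility or one-dimensionality of the bottom derivative) does transpose from \cite{M11-2} as the paper asserts.
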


From this, we deduce the convergence of the following integrals, which we will need later.

\begin{prop}\label{convint}
Let $\pi$ be an irreducible unitary representation of $G_n$, for $n\geq 1$. Let $\pi^{(h)}$ be the highest derivative of $\pi$, and let $W$ 
belong to $K(\pi,\theta)$. We suppose that there is a nonzero $G_{n-h}^\sigma$-invariant linear form $L$ on the space of 
$\pi^{[h]}$, and for every element $W$ of $K(\pi,\theta)$, we define the map $f_{L,W}=L\circ W$. Then for all $W$ 
in $K(\pi,\theta)$, the integral 
$$\Lambda(W)=\int_{N_{n,h}^\sigma\backslash P_n^\sigma} f_{L,W}(p)dp$$ is absolutely convergent, and $\Lambda$ defines a 
nonzero $P_n^{\sigma}$-invariant 
linear form on $V_{\pi}$.
\end{prop}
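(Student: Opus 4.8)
\textbf{Proof plan for Proposition \ref{convint}.} The plan is to reduce the absolute convergence of $\Lambda(W)$ to the asymptotic expansion in Theorem \ref{asympt}, and then to handle nonvanishing by an abstract functional-analytic argument. First I would describe the domain of integration concretely: using the decomposition $P_n = N_{n,h}G_{n-h}'$ for an appropriate slice, or rather the coset space $N_{n,h}^\sigma\backslash P_n^\sigma$, I would show that it is parametrised (up to a compact piece) by the torus $Z_{n-h+1}^\sigma\dots Z_{n-1}^\sigma$ together with a compact set, so that the integral over $N_{n,h}^\sigma\backslash P_n^\sigma$ reduces, after integrating out the compact directions, to an integral over the $F$-points of that torus against the modulus $\delta$-factors. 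The relation \eqref{relkirillov} shows that $f_{L,W}$ transforms on the left under $N_{n,h}^\sigma$ by a character, so $f_{L,W}$ descends to a well-defined function on the quotient; the point $h\geq 2$ versus $h\leq 1$ (where $\pi$ is generic resp. the torus is trivial, and the claim is essentially immediate) should be separated out at the start.

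Next, the core estimate: substituting the expansion from Theorem \ref{asympt} for the restriction $W(z_{n-h+1}\dots z_{n-1})$, each term is, up to the compactly supported functions $\phi_k(t(z_k))$ and polynomial factors $v_F(z_k)^{m_k}$, a product over $k$ of $[c_{i_k,k}\delta_{U_{k+1}}^{1/2}\cdots\delta_{U_n}^{1/2}](z_k)$. The measure $dp$ on $N_{n,h}^\sigma\backslash P_n^\sigma$ contributes, against $dz_k$ on $Z_k^\sigma$, a modulus factor; combining these with the transformation factor $|a|_K^{(h-1)/2}$ built into $K(\pi,\theta)$ and the normalising character $\delta_{U_k}$'s, the exponent of $|t(z_k)|_F$ in each integrand term is governed by $\mathrm{Re}(c_{i_k,k})$ plus an explicit positive shift. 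Here is where Proposition \ref{criterionbernstein} is essential: since $\pi$ is irreducible \emph{unitary}, the central characters of the irreducible subquotients of $\pi^{(k)}$ for $0<k<h$ all have strictly positive real part, and the compact support of $\phi_k$ near $t(z_k)=0$ kills the other end, so each one-dimensional integral $\int_{F^*} |t|_F^{s_k}\, v_F(t)^{m_k}\,\phi_k(t)\,d^\times t$ converges absolutely (the positive real part ensures decay as $|t|_F\to 0$, compact support ensures the behaviour at $|t|_F\to\infty$ is harmless). Multiplying the finitely many terms and finitely many tori directions gives absolute convergence of $\Lambda(W)$.

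For nonvanishing, I would argue as follows. The functional $W\mapsto \Lambda(W)$ is by construction $P_n^\sigma$-invariant once convergence is known, since it is an integral of the left-$N_{n,h}^\sigma$-equivariant, right-translation-covariant function $f_{L,W}$ over the quotient (a standard unfolding: right translation by $p_0\in P_n^\sigma$ amounts to a change of variables in the integral, with the Jacobian cancelled by the $\delta$-normalisation). To see $\Lambda\neq 0$, I would exploit that $f_{L,W}$ restricted to $N_{n,h}^\sigma\backslash P_n^\sigma$ can be made to be, say, a fixed nonzero value times a bump function supported in an arbitrarily small neighbourhood of the identity coset: concretely, by smoothness of $W$ and the fact that $L\circ$ (evaluation at $1$) is a nonzero linear form on $\pi^{[h]}$ (which is where the hypothesis that $L$ is a nonzero $G_{n-h}^\sigma$-invariant form enters, together with the fact that $K(\pi,\theta)$ surjects onto $V_{\pi^{[h]}}$ at the identity, i.e. $W\mapsto W(1)$ is surjective), one chooses $W$ with $W(1)$ not annihilated by $L$ and $W$ supported in a small bi-$K$-invariant neighbourhood, making the integrand a nonzero multiple of the characteristic function of a small set, hence $\Lambda(W)\neq 0$.

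The main obstacle is the convergence estimate, specifically bookkeeping the exponents: one must check that after assembling the transformation factor $|a|_K^{(h-1)/2}$ from \eqref{relkirillov}, the modulus characters $\delta_{U_{k+1}}^{1/2}\cdots\delta_{U_n}^{1/2}$ appearing in Theorem \ref{asympt}, and the modulus of the measure on $N_{n,h}^\sigma\backslash P_n^\sigma$ (which is where the ``$\sigma$-fixed points halve the exponent'' phenomenon, $|x|_K = |x|_F^2$, must be tracked carefully), the real part of the resulting exponent of each $|t(z_k)|_F$ is strictly positive — i.e. that the positive shift coming from the half-sum of roots genuinely dominates. This is exactly the point where Proposition \ref{criterionbernstein} is needed rather than mere unitarity of $\pi$ itself, and getting the normalisations consistent between the $K$-valued and $F$-valued absolute values is the delicate routine part. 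Everything else is soft: reduction to the torus, the unfolding for invariance, and the bump-function argument for nonvanishing.
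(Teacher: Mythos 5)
Your overall route is the paper's: treat $h=1$ separately, descend $f_{L,W}$ to $N_{n,h}^\sigma\backslash P_n^\sigma$ using relation (\ref{relkirillov}) together with the identity $|a|_F^{h-1}=\frac{\d_{N_{n,h}^\sigma}}{\d_{P_n^\sigma}}(h(a,n))$, reduce by the Iwasawa decomposition to an integral over the torus $Z_{n-h+1}^\sigma\cdots Z_{n-1}^\sigma$ against the appropriate modulus factor, substitute the expansion of Theorem \ref{asympt}, and conclude absolute convergence because Proposition \ref{criterionbernstein} makes the exponents $Re(c_{i_k,k}\d_{U_{k+1}}^{1/2})$ strictly positive while the functions $\phi_k$ are compactly supported (your parenthetical has the two ends the right way around, even though the phrase preceding it garbles which end compact support handles). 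The invariance-by-unfolding remark is also fine. This part of your plan is the paper's proof.

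The one place where your argument as written has a genuine gap is the non-vanishing. You assert that, because $W\mapsto W(1)$ is surjective onto $V_{\pi^{[h]}}$ and $W$ is smooth, you may choose $W\in K(\pi,\theta)$ with $L(W(1))\neq 0$ \emph{and} with support in a small neighbourhood of the identity modulo $N_{n,h}$. Smoothness plus surjectivity of evaluation at $1$ gives no control whatsoever on supports: a general element of the degenerate Kirillov model lives in the full induced space $(\hat{\Phi}^{+})^{h-1}\Psi^{+}(\pi^{(h)})$ and cannot be truncated while remaining in the model (already for $GL(2)$ the Kirillov functions of a non-cuspidal irreducible representation are not all compactly supported on $F^*$). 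What legitimises the bump-function choice --- and this is exactly how the paper concludes --- is that $\pi|_{P_n}$ contains the compactly induced submodule $(\Phi^{+})^{h-1}\Psi^{+}(\pi^{(h)})$ (the bottom piece of the Bernstein--Zelevinsky filtration of Lemma \ref{filtr}), which sits inside $K(\pi,\theta)$, and that the restriction to $P_n^\sigma$ of its elements surjects onto $\mathcal{C}_c^\infty\bigl(N_{n,h}^\sigma\backslash P_n^\sigma,\frac{\d_{N_{n,h}^\sigma}}{\d_{P_n^\sigma}}\pi^{[h]}\otimes\1\bigr)$; one then picks a compactly supported section concentrated near the identity coset whose value at $1$ is not annihilated by $L$. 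Without invoking this submodule (or an equivalent statement), your non-vanishing step does not go through; with it, your argument coincides with the paper's.
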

\begin{proof} If $h$ equals $1$, then $\Lambda(W)$ is equal to $L(W(I_n))$ up to normalisation, and the result is obvious.  
For $h\geq 2$, first, thanks to Relation (\ref{relkirillov}), the restriction of the map $f_{L,W}$ to $P_n^\sigma$ satisfies the relation 
$$f_{L,W}(h(a,n)p)= |a|_F^{h-1}f_{L,W}(p)$$ for $p$ in $P_n^\sigma$, and $h(a,n)$ in $N_{n,h}^\sigma$. We notice that 
$|a|_F^{h-1}$ is indeed equal to $\frac{\d_{N_{n,h}^\sigma}}{\d_{P_n^\sigma}}(h(a,n))=\frac{|a|_F^{h}}{|a|_F}$. 
Actually, the integral $\Lambda(W)$ is equal to $$\int_{N_{n-1,h}^\sigma\backslash G_{n-1}^\sigma} f_{L,W}(p)dp.$$
Hence, thanks to the Iwasawa decomposition, the integral $\Lambda$ will converge absolutely for any $W$ in $K(\pi,\theta)$, if and only if so does 
the integral $$\int_{Z_{n-h+1}\dots Z_{n-1}}f_{L,W}(z_{n-h+1}\dots z_n)\d_{N_{n-1,h}^\sigma}^{-1}(z_{n-h+1}\dots z_{n-1})d^*z_{n-h+1}\dots d^*z_{n-1}$$ for any $W$ in $K(\pi,\theta)$. 
As $\d_{N_{n-1,h}^\sigma}(z_{n-h+1}\dots z_{n-1})$ is equal to the product 
$$\prod_{k=n-h+1}^{n-1} \d_{U_{k+1}^{\sigma}}\dots \d_{U_{n-1}^{\sigma}}(z_k)=\prod_{k=n-h+1}^{n-1} \d_{U_{k+1}}^{1/2}\dots \d_{U_{n-1}}^{1/2}(z_k)$$ 
for the $z_i$'s in $Z_i^\sigma$, we obtain that the integral $$\int_{Z_{n-h+1}\dots Z_{n-1}}|f_{L,W}(z_{n-h+1}\dots z_n)|\d_{N_{n-1,h}^\sigma}^{-1}(z_{n-h+1}\dots z_{n-1})d^*z_{n-h+1}\dots d^*z_{n-1}$$ is majorized by a sum of integrals of the form 
$$\displaystyle \prod_{k=n-h+1}^{n-1} \int_{Z_k} c_{i_k,k}\d_{U_{n}}^{1/2}(z_k) v_F(z_k)^{m_k}f_k(t(z_k))d^*z_k$$ 
$$= \prod_{k=n-h+1}^{n-1} \int_{Z_k} c_{i_k,k}\d_{U_{k+1}}^{1/2}(z_k) v_F(z_k)^{m_k}f_k(t(z_k))d^*z_k,$$ for functions 
$f_k=L\circ \phi_k$ in $\mathcal{C}_c^\infty(F)$, thanks to Theorem \ref{asympt}. These last integrals are convergent, as, according to Proposition 
\ref{criterionbernstein}, the real part $Re(c_{i_k,k}\d_{U_{k+1}}^{1/2})$ is positive. This concludes the proof of the convergence. To show that $\Lambda$ is nonzero, we just need to remember that 
$\pi$ contains as a $P_n$-submodule the space $(\Phi^+)^{h-1}(\Psi^+(\pi^{(h)}))$, and the restriction to $P_n^\sigma$ of elements of 
$(\Phi^+)^{h-1}(\Psi^+(\pi^{(h)}))$ is surjective on the space 
$$\mathcal{C}_c^\infty(N_{n,h}^\sigma \backslash P_n^\sigma,\frac{\d_{N_{n,h}^\sigma}}{\d_{P_n^\sigma}}\pi^{[k]}\otimes \1).$$
\end{proof}

\subsection{The case of Speh representations}\label{sectionspeh}

The aim of this section is to prove that a representation $u(\D,k)$ is $\sigma$-distinguished if and only if $\D$ is, independently of $k$. Oddly enough, 
the trickiest part is to prove that when $\D$ is $\sigma$-distinguished, so is $u(\D,k)$. We first recall as a lemma Proposition 1 of \cite{K04}, which is the key ingredient of the proof of the functional equation of the local Asai $L$-function.

\begin{LM}\label{lmkable}
Let $\tau$ be a representation of $P_n$ for $n\geq 1$, then the space $Hom_{P_{n+1}^\sigma}(\Phi^+(\tau),\1)$ is isomorphic 
to $Hom_{P_n^\sigma}(\tau,\1)$.
\end{LM}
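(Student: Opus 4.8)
The plan is to compute $Hom_{P_{n+1}^\sigma}(\Phi^+(\tau),\1)$ by Mackey theory, decomposing the restriction to $P_{n+1}^\sigma$ of the compactly induced representation $\Phi^+(\tau)=ind_{P_nU_{n+1}}^{P_{n+1}}(\d_{U_{n+1}}^{1/2}\tau\otimes\theta)$ according to the $P_{n+1}^\sigma$-orbits on the coset space $P_nU_{n+1}\backslash P_{n+1}$. Since $U_{n+1}$ is normal in $P_{n+1}$ with $P_{n+1}/U_{n+1}\simeq G_n$ and $P_nU_{n+1}/U_{n+1}\simeq P_n$, one identifies $P_nU_{n+1}\backslash P_{n+1}$ with $P_n\backslash G_n$, i.e. with the set of nonzero row vectors $\bar v\in K^n\setminus\{0\}$ (via the last row of a matrix), and a direct computation shows that the right $P_{n+1}^\sigma$-action factors through $G_n^\sigma=GL_n(F)$ and is the obvious one. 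Hence the relevant orbits are the $GL_n(F)$-orbits on $K^n\setminus\{0\}$: there is one ``rational'' orbit $F^n\setminus\{0\}$, which is closed and represented by $I_{n+1}$, and its complement, which is open. Only these two facts (one closed orbit, open complement) will be needed.

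The geometric lemma for the closed subgroup $P_{n+1}^\sigma$ then produces a short exact sequence of smooth $P_{n+1}^\sigma$-modules
$$0\longrightarrow \Phi^+(\tau)_{\mathrm{op}}\longrightarrow \Phi^+(\tau)|_{P_{n+1}^\sigma}\longrightarrow \Phi^+(\tau)_{\mathrm{cl}}\longrightarrow 0,$$
where $\Phi^+(\tau)_{\mathrm{op}}$ is the space of sections supported off the rational orbit and $\Phi^+(\tau)_{\mathrm{cl}}$ is the compact induction to $P_{n+1}^\sigma$ of the restriction of $\d_{U_{n+1}}^{1/2}\tau\otimes\theta$ to $P_n^\sigma U_{n+1}^\sigma$, twisted by the modulus comparing the two invariant measures. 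Applying the left-exact functor $Hom_{P_{n+1}^\sigma}(-,\1)$ gives an exact sequence, so it suffices to prove that $Hom_{P_{n+1}^\sigma}(\Phi^+(\tau)_{\mathrm{op}},\1)=0$ and that $Hom_{P_{n+1}^\sigma}(\Phi^+(\tau)_{\mathrm{cl}},\1)\simeq Hom_{P_n^\sigma}(\tau,\1)$ naturally.

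The vanishing on the open part is the conceptual heart. Since the $P_{n+1}^\sigma$-action on $P_n\backslash G_n$ factors through $G_n^\sigma$, the subgroup $U_{n+1}^\sigma$ acts trivially on the orbit space, hence fibrewise on $\Phi^+(\tau)_{\mathrm{op}}$; over the point $\bar v$ it acts (moduli being trivial on unipotent subgroups) through the character $w\mapsto\theta(\sum_i\bar v_iw_i)$ of $U_{n+1}^\sigma=\{w\in F^n\}$. The point is that this character is nontrivial whenever $\bar v\notin F^n$: the set $S=\{y\in K:\theta(yF)=1\}$ is an $F$-subspace of $K$ containing $F$, so it is $F$ or $K$, and it cannot be $K$ because $\theta$ is nontrivial and $KF=K$; thus $S=F$, and $\theta(\bar v_iF)=1$ for all $i$ forces $\bar v\in F^n$. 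Consequently $U_{n+1}^\sigma$ acts on $\Phi^+(\tau)_{\mathrm{op}}$ fibrewise through nontrivial characters, so the $U_{n+1}^\sigma$-coinvariants of this space of $\mathcal{C}_c^\infty$-sections vanish, and therefore $Hom_{P_{n+1}^\sigma}(\Phi^+(\tau)_{\mathrm{op}},\1)=0$. Equivalently, any element of $Hom_{P_{n+1}^\sigma}(\Phi^+(\tau),\1)$, being $U_{n+1}^\sigma$-invariant, is supported on the rational orbit and factors through $\Phi^+(\tau)_{\mathrm{cl}}$.

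It remains to identify $Hom_{P_{n+1}^\sigma}(\Phi^+(\tau)_{\mathrm{cl}},\1)$. By Frobenius reciprocity for compact induction from $P_n^\sigma U_{n+1}^\sigma$ this is a space $Hom_{P_n^\sigma U_{n+1}^\sigma}(\d_{U_{n+1}}^{1/2}\tau\otimes\theta\otimes(\text{modulus}),\d_{P_n^\sigma U_{n+1}^\sigma\backslash P_{n+1}^\sigma})$; on $U_{n+1}^\sigma$ the character $\theta$ is trivial (this is the case $\bar v=e_n^*$ of the previous computation) and so is $\d_{U_{n+1}}^{1/2}$, so $U_{n+1}^\sigma$ acts trivially on source and target and the $Hom$ collapses to one over $P_n^\sigma$, the residual modulus characters restricted to $P_n^\sigma$ cancelling by the very normalisation of the functor $\Phi^+$. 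This last cancellation, together with setting up the Mackey filtration for the non-open symmetric subgroup $P_{n+1}^\sigma$ correctly, is the only delicate bookkeeping in the argument; everything conceptual is contained in the computation $S=F$. One could instead run the whole proof in the explicit function-space model of $\Phi^+(\tau)$, analysing $P_{n+1}^\sigma$-invariant functionals through their restriction to a section of $P_nU_{n+1}\backslash P_{n+1}$, which replaces the abstract filtration by a hands-on support argument but relies on the same character computation.
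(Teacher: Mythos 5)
Your argument is correct, but be aware that the paper itself contains no proof of this lemma: it is simply quoted as Proposition 1 of \cite{K04}, so the relevant comparison is with Kable's proof, and what you have written is essentially a reconstruction of it. Kable likewise realises $\Phi^+(\tau)$ as compactly supported $V_\tau$-valued sections over $P_nU_{n+1}\backslash P_{n+1}\simeq P_n\backslash G_n\simeq K^n\setminus\{0\}$, kills everything supported off the rational points using exactly your observation that $\{y\in K:\theta(yF)=1\}=F$, and identifies the contribution of the closed orbit with $Hom_{P_n^\sigma}(\tau,\1)$. Two points in your write-up deserve emphasis, and both check out. First, your framing ``one closed orbit and its open complement'' is the right one: the complement of $F^n\setminus\{0\}$ is not a single $G_n^\sigma$-orbit (the sets $\lambda\cdot(F^n\setminus\{0\})$ for distinct classes $\lambda\in K^*/F^*$ already give infinitely many), so a literal orbit-by-orbit Mackey analysis would be awkward, whereas your two-term filtration combined with the fibrewise nontrivial character of $U_{n+1}^\sigma$ on the open part uses only what is true and suffices. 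Second, the modulus bookkeeping you assert rather than compute does work: for $p\in P_n^\sigma$ one has $\delta_{U_{n+1}}^{1/2}(p)=|\det p|_K^{1/2}=|\det p|_F$, which is precisely the value on $P_n^\sigma$ of the quotient character $\delta_{(P_nU_{n+1})^\sigma}\,\delta_{P_{n+1}^\sigma}^{-1}$ governing the invariant functional on $(P_nU_{n+1})^\sigma\backslash P_{n+1}^\sigma$, so the residual twist on $P_n^\sigma$ is trivial; this is the same identity $\delta_{U^\sigma}=\delta_U^{1/2}$ on $\sigma$-fixed points that the paper exploits in the proof of Proposition \ref{convint}. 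So the proposal is sound; it differs from the cited source only in packaging (abstract closed-orbit/open-complement exact sequence and Frobenius reciprocity for compact induction versus Kable's hands-on support argument), which you yourself note are interchangeable.
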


This implies the following generalisation of Theorem 1.1 of \cite{AKT04}:

\begin{prop}\label{higherdistunitary}
Let $\pi$ be an irreducible unitary representation of $G_n$ for $n\geq 1$. The representation $\pi$ is $P_n^\sigma$-distinguished if 
and only if its highest shifted 
derivative $\pi^{[-]}$ is $\sigma$-distinguished.
\end{prop}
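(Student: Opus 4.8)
The plan is to prove both implications by relating invariant linear forms on $\pi$ to invariant linear forms on its highest shifted derivative, using the degenerate Kirillov model and the filtration of $P_n$-modules from Lemma \ref{filtr}. Let $\pi^{(h)}=\pi^{-}$ be the highest derivative, so that $\pi^{[-]}=\pi^{[h]}=\nu_K^{1/2}\pi^{(h)}$, and recall that $\pi$, restricted to $P_n$, embeds as the Kirillov submodule $K(\pi,\theta)$ inside $(\hat{\Phi}^+)^{h-1}\Psi^+(\pi^{(h)})$, and also \emph{contains} the submodule $(\Phi^+)^{h-1}\Psi^+(\pi^{(h)})$ (this is the bottom piece $\tau_h$ of the filtration when the higher derivatives beyond $h$ vanish; more precisely it is the image of $(\Phi^+)^{h-1}$ applied to the top layer).

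For the direction ``$\pi$ distinguished $\Rightarrow$ $\pi^{[-]}$ distinguished'', I would argue as follows. A nonzero element of $Hom_{P_n^\sigma}(\pi,\1)$ restricts to the submodule $(\Phi^+)^{h-1}\Psi^+(\pi^{(h)})$. If this restriction is nonzero, then by Lemma \ref{lmkable} applied $h-1$ times we get $Hom_{P_n^\sigma}((\Phi^+)^{h-1}\Psi^+(\pi^{(h)}),\1)\cong Hom_{G_{n-h}^\sigma}(\Psi^+(\pi^{(h)})\text{ seen appropriately},\1)$; unwinding the definition of $\Psi^+$ (which up to the modulus twist $\delta_{U}^{1/2}$ is just $\pi^{(h)}\otimes 1$) and tracking the $\delta$-normalisations, this is exactly $Hom_{G_{n-h}^\sigma}(\pi^{[h]},\1)$, giving the distinction of $\pi^{[-]}$. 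If the restriction to the submodule is zero, then the linear form factors through the quotient, which is a proper subquotient built from strictly lower derivatives $\pi^{(k)}$, $k<h$; here I would invoke Proposition \ref{criterionbernstein}: the central characters of the subquotients of $\pi^{[k]}$ for $0<k<h$ have strictly positive real part, and an $\eta^{\ast}$-twisted $G_{n-k}^\sigma$-invariant form on such a representation forces a central-character constraint (the central character restricted to $F^{\ast}$ must be unitary, or at least its real part must vanish) which is incompatible with positivity — so no nonzero form can factor through that quotient, a contradiction. Hence the restriction is nonzero and we are done. This interplay is essentially the generalisation of the argument in \cite{AKT04} alluded to in the statement.

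For the converse, ``$\pi^{[-]}$ distinguished $\Rightarrow$ $\pi$ distinguished'', I would use Proposition \ref{convint}: taking $L$ a nonzero $G_{n-h}^\sigma$-invariant linear form on $\pi^{[h]}$ (which exists by hypothesis), the integral $\Lambda(W)=\int_{N_{n,h}^\sigma\backslash P_n^\sigma} (L\circ W)(p)\,dp$ converges absolutely for all $W\in K(\pi,\theta)$ and defines a nonzero $P_n^\sigma$-invariant linear form on $V_\pi$. That is already the full statement in the case $h\ge 1$; for $h=0$ the claim is degenerate (then $\pi$ is cuspidal-like on $P_n$ and the statement is immediate, or rather $h\geq1$ always since $P_1$ is trivial).

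I expect the main obstacle to be the first direction, and specifically the bookkeeping of the modulus characters $\delta_{U_k}$ through the chain of functors $(\Phi^+)^{h-1}\Psi^+$ when transporting a $P_n^\sigma$-invariant form down to a $G_{n-h}^\sigma$-invariant form on the shifted (not unshifted) derivative — getting the $\nu_K^{1/2}$ shift to come out correctly is where the normalisations in Relation (\ref{relkirillov}) and the definition of $\tau^{[k]}$ have to be matched precisely. The second potential subtlety is justifying rigorously that a twisted-invariant form cannot live on a representation all of whose central exponents have positive real part; this is a standard positivity-of-exponents argument (an invariant form against a positive character forces integrability constraints that fail), but it must be stated carefully using Proposition \ref{criterionbernstein}.
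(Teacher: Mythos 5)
Your proposal is correct and follows essentially the same route as the paper: Proposition \ref{convint} for the direction ``$\pi^{[-]}$ distinguished $\Rightarrow$ $\pi$ is $P_n^\sigma$-distinguished'', and for the other direction the filtration of Lemma \ref{filtr} combined with Lemma \ref{lmkable} and the definition of $\Psi^+$ to identify $Hom_{P_n^\sigma}((\Phi^+)^{k-1}\Psi^+\pi^{(k)},\1)$ with $Hom_{G_{n-k}^\sigma}(\pi^{[k]},\1)$, with Proposition \ref{criterionbernstein} ruling out the layers $k<h$ by positivity of the central exponents. The only cosmetic slip is the mention of an $\eta$-twist in the exclusion step: the Hom spaces arising here are untwisted, and the argument is simply that a nonzero $G_{n-k}^\sigma$-invariant form forces some irreducible subquotient of $\pi^{[k]}$ to have central character of real part zero, contradicting Bernstein's criterion.
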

\begin{proof}
One implication follows from Proposition \ref{convint}. For the other one, we first notice that by definition of $\Psi^+$, if $\pi'$ is a 
representation of $G_k$ for $k\geq 0$, then 
the space $Hom_{P_{k+1}^\sigma}(\Psi^+(\pi'),\1)$ is isomorphic to $Hom_{G_k^\sigma}(\nu^{1/2}\pi',\1)$. Hence, thanks to Lemma \ref{lmkable}, 
the space $Hom_{P_{k+l}^\sigma}((\Phi^+)^{l-1}\Psi^+(\tau),\1)$ is isomorphic to $Hom_{G_k^\sigma}(\nu^{1/2}\tau,\1)$. Now, if $\pi$ is an 
irreducible unitary 
representation of $G_n$, let $h$ be the integer such that $\pi^{-}=\pi^{(h)}$. The restriction of $\pi$ to $P_n$ has a filtration with 
factors $(\Phi^+)^{k-1}\Psi^+(\pi^{(k)})$ for $k$ between $1$ and $h$ according to Lemma \ref{filtr}. If $L$ is a nonzero $P_n^\sigma$-invariant 
linear form on $\pi$, 
it must induce a nonzero element of $Hom_{P_n^\sigma}((\Phi^+)^{k-1}\Psi^+ \pi^{(k)},\1)\simeq Hom_{G_{n-k}^\sigma}(\pi^{[k]},\1)$ 
for some $k$ in $\{1,\dots,h\}$. But 
if the space $Hom_{G_{n-k}^\sigma}(\pi^{[k]},\1)$ is nonzero, it implies that the central character of one of the irreducible
 subquotients of $\pi^{[k]}$ has real part equal to zero, because $F^*$ must act trivially on at least one irreducible subquotient of $\pi^{[k]}$. Hence, according to Proposition \ref{criterionbernstein}, this means that the space $Hom_{P_n^\sigma}((\Phi^+)^{k-1}\Psi^+ \pi^{(k)},\1)$ is 
reduced to zero for $k$ between $1$ and $h-1$, and that the space 
$$Hom_{P_n^\sigma}((\Phi^+)^{h-1}\Psi^+\pi^{(h)},\1)\simeq Hom_{G_{n-h}^\sigma}(\pi^{[h]},\1)$$ is nonzero. The result is thus proved.
\end{proof}

The proof of the preceding proposition implicitly contains the following statement.

\begin{prop}\label{crucial}
Let $\pi$ be an irreducible unitary representation of $G_n$ which is $P_n^\sigma$-distinguished. Then its highest shifted derivative 
$\pi^{[-]}$ is $\sigma$-distinguished, and the 
space $Hom_{P_n^\sigma}(\pi,\1)$ is of dimension $1$, with basis a certain linear form $L$. Moreover, the restriction of $L$ to 
$\tau_{0}= (\Phi^+)^{h-1}\Psi^+(\pi^{-})$ is nonzero, and if $\tau$ is any $P_n$-submodule of $\pi$ which is $P_n^{\sigma}$-distinguished, 
then $\tau$ contains $\tau_{0}$, and the space $Hom_{P_n^\sigma}(\tau,\1)$ is spanned by the restriction $L_{|\tau}$.
\end{prop}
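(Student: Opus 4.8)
The plan is to extract everything from the proof of Proposition \ref{higherdistunitary}, reorganising it so that the intermediate facts it establishes are made explicit. First I would fix the integer $h$ with $\pi^{-}=\pi^{(h)}$, and recall from Lemma \ref{filtr} that the restriction of $\pi$ to $P_n$ carries the Bernstein--Zelevinsky filtration $0\subset\tau_h\subset\dots\subset\tau_1=\pi$ whose successive quotients $\tau_k/\tau_{k+1}$ are $(\Phi^+)^{k-1}\Psi^+\pi^{(k)}$; in particular the bottom piece $\tau_h$ is itself $(\Phi^+)^{h-1}\Psi^+\pi^{(h)}=(\Phi^+)^{h-1}\Psi^+\pi^{-}$, which is the object called $\tau_0$ in the statement. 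The composite isomorphism built from Lemma \ref{lmkable} and the elementary identity $Hom_{P_{k+1}^\sigma}(\Psi^+\pi',\1)\simeq Hom_{G_k^\sigma}(\nu^{1/2}\pi',\1)$ gives, for every $k$,
\begin{equation}\label{eqiso}
Hom_{P_n^\sigma}\big((\Phi^+)^{k-1}\Psi^+\pi^{(k)},\1\big)\simeq Hom_{G_{n-k}^\sigma}\big(\pi^{[k]},\1\big).
\end{equation}
By Proposition \ref{criterionbernstein} the right-hand side of \eqref{eqiso} vanishes for $1\leq k\leq h-1$ (the exponents of $\pi^{[k]}$ have strictly positive real part, so $F^*$ cannot act trivially on any subquotient), and it is the space $Hom_{G_{n-h}^\sigma}(\pi^{[-]},\1)$ when $k=h$.

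Next I would run the long exact sequence in $Hom(-,\1)$ along the filtration. Since the only nonzero graded piece contributing is the bottom one $\tau_0=\tau_h$, the restriction map $Hom_{P_n^\sigma}(\pi,\1)\to Hom_{P_n^\sigma}(\tau_0,\1)$ is injective; more precisely any $P_n^\sigma$-invariant form on $\pi$ that is nonzero must restrict nontrivially to $\tau_0$, because its image in each higher quotient $Hom_{P_n^\sigma}(\tau_k/\tau_{k+1},\1)=0$ forces it to factor through $\tau_k$ for every $k$, hence through $\tau_h=\tau_0$. This already shows $\pi^{[-]}$ is $\sigma$-distinguished (via \eqref{eqiso} with $k=h$) and that restriction to $\tau_0$ is injective on $Hom_{P_n^\sigma}(\pi,\1)$. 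For the dimension-one assertion I would combine this injectivity with the fact that $Hom_{P_n^\sigma}(\tau_0,\1)\simeq Hom_{G_{n-h}^\sigma}(\pi^{[-]},\1)$, and that the latter is one-dimensional: $\pi^{[-]}=\nu_K^{1/2}\pi^{(h)}$ with $\pi^{(h)}$ irreducible, so Proposition \ref{autodualmult1} applies. Thus $\dim Hom_{P_n^\sigma}(\pi,\1)\leq 1$, and it is $\geq 1$ by hypothesis, giving a basis $L$ with $L_{|\tau_0}\neq 0$.

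Finally, for an arbitrary $P_n^\sigma$-distinguished $P_n$-submodule $\tau\subset\pi$: intersecting the filtration of $\pi$ with $\tau$ gives a filtration of $\tau$ whose graded pieces inject into the $\tau_k/\tau_{k+1}$; applying the same vanishing from \eqref{eqiso} and Proposition \ref{criterionbernstein}, a nonzero $P_n^\sigma$-invariant form on $\tau$ must be supported on $\tau\cap\tau_h$ and be nonzero there, which in particular forces $\tau\cap\tau_h\neq 0$. Since $\tau_h=\tau_0$ is irreducible as a $P_n$-module — here I would invoke the irreducibility of $\Psi^+$ applied to the irreducible representation $\pi^{(h)}$ together with exactness and faithfulness of the functor $(\Phi^+)^{h-1}$ on the relevant category, as in \cite{BZ77} — a nonzero submodule of $\tau_0$ is all of $\tau_0$, so $\tau_0\subseteq\tau$. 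Then $Hom_{P_n^\sigma}(\tau,\1)$ injects (by restriction, using the vanishing on the higher graded pieces of $\tau/\tau_0$) into $Hom_{P_n^\sigma}(\tau_0,\1)$, which is one-dimensional and spanned by $L_{|\tau_0}$; hence $Hom_{P_n^\sigma}(\tau,\1)=\C\cdot L_{|\tau}$. The main obstacle I anticipate is the bookkeeping around the long exact sequence: one must check that the connecting maps do not create spurious classes, i.e. that the vanishing of $Hom_{P_n^\sigma}(\tau_k/\tau_{k+1},\1)$ for $k<h$ really does propagate to give injectivity of restriction to the bottom piece and the support statement for submodules; this is where the precise form of Proposition \ref{criterionbernstein}, guaranteeing the exponents of every subquotient of $\pi^{[k]}$ have positive real part for $k<h$, is used essentially, and where one also needs the irreducibility of $\tau_0$ as a $P_n$-module rather than merely of its highest derivative.
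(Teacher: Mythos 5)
The first part of your argument --- the identification of $\tau_0$ with the bottom piece of the Bernstein--Zelevinsky filtration, the isomorphism of Hom-spaces via Lemma \ref{lmkable}, the vanishing for $1\leq k\leq h-1$ via Proposition \ref{criterionbernstein}, the resulting injectivity of restriction to $\tau_0$, and the dimension-one statement via Proposition \ref{autodualmult1} applied to the irreducible representation $\pi^{[h]}$ --- is correct and is precisely the dévissage the paper has in mind (the paper gives no separate proof, stating only that the proposition is implicit in the proof of Proposition \ref{higherdistunitary}).

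The gap is in your treatment of an arbitrary $P_n^\sigma$-distinguished submodule $\tau$. You intersect the filtration of $\pi$ with $\tau$ and then ``apply the same vanishing'' to the graded pieces of this intersected filtration. But those graded pieces are only \emph{submodules} of $\tau_k/\tau_{k+1}\simeq(\Phi^+)^{k-1}\Psi^+\pi^{(k)}$, and what you have proved is $Hom_{P_n^\sigma}\bigl((\Phi^+)^{k-1}\Psi^+\pi^{(k)},\1\bigr)=0$: an invariant functional on a submodule need not extend to the ambient module, so this vanishing does not formally pass to submodules, and Lemma \ref{lmkable} only applies to objects literally of the form $\Phi^+(\cdot)$. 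As written, the claim that a nonzero invariant form on $\tau$ is ``supported on $\tau\cap\tau_h$'' is a non sequitur. Two repairs are available: (a) apply Lemma \ref{filtr} to $\tau$ itself (and later to $\tau/\tau_0$); by exactness of the functors $\Phi^{\pm},\Psi^{\pm}$ the graded pieces are then genuinely $(\Phi^+)^{k-1}\Psi^+\tau^{(k)}$ with $\tau^{(k)}\hookrightarrow\pi^{(k)}$, so Lemma \ref{lmkable} applies and the irreducible subquotients of $\tau^{[k]}$ are among those of $\pi^{[k]}$, whence Proposition \ref{criterionbernstein} still gives the vanishing for $k<h$; or (b) first show that every $P_n$-submodule $N$ of $(\Phi^+)^{k-1}\Psi^+\pi^{(k)}$ is itself of the form $(\Phi^+)^{k-1}\Psi^+(M)$ with $M\subseteq\pi^{(k)}$, using the exact sequence $0\to\Phi^+\Phi^-V\to V\to\Psi^+\Psi^-V\to 0$ of \cite{BZ77} together with $\Psi^-\Phi^+=0$ and $\Phi^-\Psi^+=0$. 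A further, smaller, imprecision: exactness and faithfulness of $(\Phi^+)^{h-1}$ do not imply that $\tau_0$ is irreducible; the correct input is the statement in Section 3.5 of \cite{BZ77} that $\Phi^+$ and $\Psi^+$ carry irreducibles to irreducibles --- or you can bypass irreducibility of $\tau_0$ altogether by noting that $\tau^{(h)}$ is a nonzero submodule of the irreducible $\pi^{(h)}$, hence equal to it, so that $(\Phi^+)^{h-1}\Psi^+\tau^{(h)}=(\Phi^+)^{h-1}\Psi^+\pi^{(h)}=\tau_0\subseteq\tau$.
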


From this, we deduce a statement which will be used in a crucial way twice after.

\begin{prop}\label{crux}
Let $n_1$ and $n_2$ be two positive integers, and $\pi_1$ and $\pi_2$ be two irreducible unitary representations of $G_{n_1}$ and $G_{n_2}$ respectively. Suppose that 
$\pi_1$ is $G_{n_1}^\sigma$-distinguished, and that $\pi_2$ is $P_{n_2}^\sigma$-distinguished. In this situation, if $\pi=\pi_1\times \pi_2$ is 
$G_n^\sigma$-distinguished, then $\pi_2$ is $G_{n_2}^\sigma$-distinguished.
\end{prop}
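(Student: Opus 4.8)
The plan is to combine the structure of $\pi=\pi_1\times\pi_2$ as a $P_n$-module with the uniqueness and support statements of Proposition \ref{crucial}. The heuristic is that distinction of $\pi_1\times\pi_2$ on the full group $G_n^\sigma$ already forces a $P_{n_2}^\sigma$-invariant form on $\pi_2$ to ``come from the bottom layer'' $\tau_0=(\Phi^+)^{h-1}\Psi^+(\pi_2^-)$ of the derivative filtration, and the bottom layer is built from the \emph{honest} (unshifted) top derivative $\pi_2^{(h)}$, which lives on a group $G_{n_2-h}$. So the mechanism should be: transfer an invariant form on $\pi$ to one living on a mirabolic piece, use that $\pi_1$ is already $G_{n_1}^\sigma$-distinguished to ``absorb'' the $\pi_1$-factor, and read off that $\pi_2$ must in fact be $G_{n_2}^\sigma$-distinguished rather than merely $P_{n_2}^\sigma$-distinguished.

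First I would recall (as in Lemma \ref{filtr}) the $P_n$-filtration of $\pi_1\times\pi_2$; by Lemma \ref{highderproduct} the highest derivative of $\pi$ is $\pi_1^-\times\pi_2^-$, and more relevantly the derivative filtration of the restriction of $\pi$ to $P_n$ has as its bottom piece $(\Phi^+)^{h-1}\Psi^+(\pi^-)$ with $\pi^-=\pi_1^-\times\pi_2^-$. Applying Proposition \ref{crucial} to $\pi$ (which is $G_n^\sigma$-distinguished, hence $P_n^\sigma$-distinguished), the space $\mathrm{Hom}_{P_n^\sigma}(\pi,\1)$ is one-dimensional, spanned by a form $L$ whose restriction to $\tau_0=(\Phi^+)^{h-1}\Psi^+(\pi^-)$ is nonzero, and which via the isomorphism $\mathrm{Hom}_{P_n^\sigma}((\Phi^+)^{h-1}\Psi^+\pi^{(h)},\1)\simeq \mathrm{Hom}_{G_{n-h}^\sigma}(\pi^{[h]},\1)$ corresponds to a nonzero $G_{n-h}^\sigma$-invariant form on the highest shifted derivative $\pi^{[-]}=\pi_1^{[-]}\times\pi_2^{[-]}$. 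Next I would run the same analysis for the $P_{n_2}$-module $\pi_2$: since $\pi_2$ is $P_{n_2}^\sigma$-distinguished, Proposition \ref{crucial} gives a one-dimensional $\mathrm{Hom}_{P_{n_2}^\sigma}(\pi_2,\1)$, nonzero on $(\Phi^+)^{h_2-1}\Psi^+(\pi_2^-)$ and corresponding to a $G_{n_2-h_2}^\sigma$-invariant form on $\pi_2^{[-]}$, i.e. $\pi_2^{[-]}$ is $\sigma$-distinguished.

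The core step is then to show that $\pi_2^{[-]}$ being $\sigma$-distinguished, together with $\pi_1$ being $G_{n_1}^\sigma$-distinguished and $\pi=\pi_1\times\pi_2$ being $G_n^\sigma$-distinguished, forces $\pi_2$ itself to be $G_{n_2}^\sigma$-distinguished. Here I would argue by reducing the claim to its highest-shifted-derivative avatar via Proposition \ref{higherdistunitary}: $\pi_2$ is $G_{n_2}^\sigma$-distinguished iff $\pi_2$ is $P_{n_2}^\sigma$-distinguished and the invariant form extends from the mirabolic to the full group, and by the symmetric-space/relative-theory input one expects a criterion in terms of the derivative tower. Concretely, I would invoke the ``linear independence of characters on the torus'' idea already used in the proof of Proposition \ref{convint}: the asymptotic expansion of Theorem \ref{asympt} shows that the restriction of the $G_{n-h}^\sigma$-invariant form on $\pi$ to the $Z_i^\sigma$-torus is governed by the central characters of the subquotients of the $\pi^{(k)}$; the contribution of the $\pi_2$-side must by itself produce an honest ($G_{n_2-h_2}^\sigma$, not just $P$) invariant form once the $\pi_1$-side — already fully distinguished on $G_{n_1}^\sigma$ — is factored out. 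Thus the only way $\mathrm{Hom}_{G_n^\sigma}(\pi_1\times\pi_2,\1)$ can be nonzero while $\pi_1$ is $G_{n_1}^\sigma$-distinguished is for $\pi_2$ to be $G_{n_2}^\sigma$-distinguished.

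The main obstacle I anticipate is precisely this last transfer: going from a $P^\sigma$-invariant form that ``factors through the bottom layer'' to a genuine $G^\sigma$-invariant form on $\pi_2$, while simultaneously disentangling the $\pi_1$-factor. The subtlety is that $\pi_1\times\pi_2$ restricted to $P_n$ does not split as a product of $P$-modules in a way that isolates $\pi_2$; one has to use the one-dimensionality from Proposition \ref{crucial} on both $\pi$ and $\pi_2$, plus the positivity of real parts of exponents from Proposition \ref{criterionbernstein}, to conclude that the unique form on $\pi$ restricts on the $\pi_2$-coordinate to the unique form on $\pi_2$, and that the latter must then be $G_{n_2}^\sigma$-invariant — equivalently, that $\pi_2^{[-]}$ being $\sigma$-distinguished suffices, via Proposition \ref{higherdistunitary} applied to $\pi_2$, to upgrade $P_{n_2}^\sigma$-distinction to $G_{n_2}^\sigma$-distinction. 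That application of Proposition \ref{higherdistunitary} is in fact the clean way to close the argument: $\pi_2$ is $P_{n_2}^\sigma$-distinguished, hence (by that proposition) its highest shifted derivative $\pi_2^{[-]}$ is $\sigma$-distinguished, and conversely one needs to check $\pi_2$ is $G_{n_2}^\sigma$- (not just $P_{n_2}^\sigma$-) distinguished — which is where the hypothesis on $\pi_1$ and on $\pi$ is genuinely used, via comparing the Kirillov-model integrals for $\pi$ and for $\pi_2$.
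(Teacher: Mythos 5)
There is a genuine gap: your argument never actually produces a $G_{n_2}^\sigma$-invariant form on $\pi_2$; at the decisive moment you assert that ``the only way $\mathrm{Hom}_{G_n^\sigma}(\pi_1\times\pi_2,\1)$ can be nonzero while $\pi_1$ is $G_{n_1}^\sigma$-distinguished is for $\pi_2$ to be $G_{n_2}^\sigma$-distinguished,'' which is the statement to be proved, not a step towards it. The appeal to Proposition \ref{higherdistunitary} cannot close this gap, because that proposition only characterizes $P^\sigma$-distinction in terms of the highest shifted derivative; it says nothing about passing from $P_{n_2}^\sigma$-invariance to $G_{n_2}^\sigma$-invariance, and that passage is precisely the content of Proposition \ref{crux}. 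Likewise, the ``linear independence of characters on the torus'' and the comparison of Kirillov-model integrals are not given any concrete form: you never write down a functional relating $L_1$, $L_2$ and the invariant form on $\pi$, so there is nothing from which invariance of $L_2$ under elements outside $P_{n_2}^\sigma$ could be read off.

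The paper's proof supplies exactly the missing mechanism, and it is geometric rather than derivative-theoretic. One realizes $\pi=\pi_1\times\pi_2$ as induced from the \emph{opposite} parabolic $P^-=P_{(n_1,n_2)}^-$; since $P^-P_n$ is open in $G_n$, the restriction $\pi|_{P_n}$ contains the $P_n$-submodule $\tau=\mathcal{C}_c^\infty(P^-\backslash P^-P_n,\delta_{P^-}^{1/2}\pi_1\otimes\pi_2)$, and on $\tau$ one defines an explicit form $L(f)=\int_{P^-\cap P_n^\sigma\backslash P_n^\sigma}(L_1\otimes L_2)(f(p))\,dp$ (convergence and equivariance follow from a modulus-character computation; nonvanishing is checked on test functions $f_{U,v_1,v_2}$ supported on $P^-U$ for a congruence subgroup $U$ with Iwahori decomposition). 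By Proposition \ref{crucial}, $\mathrm{Hom}_{P_n^\sigma}(\pi,\1)$ is one-dimensional and its generator restricts to $\tau$ as (a multiple of) $L$; since $\pi$ is $G_n^\sigma$-distinguished, that generator is in fact $G_n^\sigma$-invariant. Testing this invariance against $h=\mathrm{diag}(I_{n_1},b)$ with $b\in G_{n_2}(\o_K)^\sigma$ on the functions $f_{U,v_1,v_2}$ yields $L_2(\pi_2(b)v_2)=L_2(v_2)$, so $L_2$ is invariant under $G_{n_2}(\o_K)^\sigma$, in particular under $w_{n_2}$; as $P_{n_2}^\sigma$ and $w_{n_2}$ generate $G_{n_2}^\sigma$, the form $L_2$ is $G_{n_2}^\sigma$-invariant. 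None of this open-orbit construction, the identification via multiplicity one, or the compact/Weyl-element upgrade appears in your proposal, and without some substitute for it the argument does not go through.
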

\begin{proof}
We write $\pi_1\times \pi_2$ as induced from the lower parabolic subgroup $P^-=P_{(n_1,n_2)}^-$ obtained by transposing $P_{(n_1,n_2)}$. It is thus the 
space $\mathcal{C}_c^\infty(P^-\backslash G_n,\d_{P^-}^{1/2}\pi_1\otimes\pi_2)$. The double class $P^-P_n$ being open in $G_n$, this space contains 
$\tau=\mathcal{C}_c^\infty(P^-\backslash P^-P_n,\d_{P^-}^{1/2}\pi_1\otimes\pi_2)$, which is a $P_n$-submodule of $\pi$. Let $L_1$ be a basis of 
$Hom_{G_{n_1}^\sigma}(\pi_1,\1)$, $L_2$ be a basis of $Hom_{P_{n_2}^\sigma}(\pi_2,\1)$, and denote by $\lambda$ the linear form 
$L_1\otimes L_2$ on $\pi_1\otimes \pi_2$. We now introduce the following linear form on 
$\tau$: $$L: f\mapsto \int_{P^-\cap P_n^\sigma \backslash P_n^\sigma} \lambda(f(p)) dp.$$
It is well defined because the restriction of $f$ to $P_n^\sigma$ has compact support modulo $P^-\cap P_n^\sigma$, because it satisfies 
$f(hp)=|a|_F^{-n_2}|b|_F^{n_1}f(p)$ for 
$$h=\begin{pmatrix}a & 0 & 0 \\ x & b &y \\ 0 & 0 & 1 \end{pmatrix}\in P^-\cap P_n^\sigma$$ written in blocks according to the partition 
$(n_1,n_2-1,1)$ of $n$, and because of the relation 
$$\frac{\d_{P^-\cap P_n^\sigma}}{\d_{P_n^\sigma}}(h)= \frac{|a|_F^{1-n_2}|b|_F^{1+n_1}}{|a|_F|b|_F}=|a|_F^{-n_2}|b|_F^{n_1}.$$
Let's now show that $L$ is nonzero. For $v_1$ and $v_2$ in $V_{\pi_1}$ de $V_{\pi_2}$, let $U$ be a congruence subgroup of $G_n$, such that 
$U\cap G_{n_1}$ fixes $v_1$ and $U\cap G_{n_2}$ fixes $v_2$. As $U$ has an Iwahori decomposition with respect to $P^-$, the map 
defined by $f_{U,v_1,v_2}(p^- u)=\d_{P^-}^{1/2}\pi_1\otimes \pi_2(p^-)(v_1\otimes v_2)$ for $u$ in $U$, $p^-$ in $P^-$, and by zero outside $P^-U$, 
belongs to $V_\pi$. Moreover $L(f_{U,v_1,v_2})$ is a positive multiple of $L_1(v_1)L_2(v_2)$, in particular $L$ is nonzero. 
This implies that $L$ belongs to 
$Hom_{P_n^\sigma}(\tau,\1)-\{0\}$. It remains to prove that $\pi_2$ is $G_{n_2}^\sigma$-distinguished, we are going to prove that 
$L_2$ is actually $G_{n_2}^\sigma$-invariant. By Proposition \ref{crucial}, as $\pi$ is irreducible, unitary, and $\sigma$-distinguished, we know that 
$Hom_{P_n^\sigma}(\pi,\1)$ is one-dimensional, spanned by a linear form $L'$. Moreover, by the same proposition, up to multiplying $L'$ by a scalar, the restriction of $L'$ to 
$\tau$ is equal to $L$. Hence we denote $L'$ by $L$. The fact that $Hom_{P_n^\sigma}(\pi,\1)$ is one-dimensional also implies that $L$ is in fact 
$G_n^\sigma$-invariant. Now take $h$ of the form $diag(I_{n_1},b)$, with $b$ in $G_{n_2}(\o_K)$, we have $\rho(h)f_{U,v_1,v_2}=f_{U,v_1,\rho(b)v_2}$. 
If moreover $b$ belongs to $G_{n_2}(\o_K)^\sigma$, the relation $L(\rho(h)f_{U,v_1,v_2})=L(f_{U,v_1,v_2})$ implies the equality 
$L_1(v_1)L_2(\rho(b)v_2)=L_1(v_1)L_2(v_2)$. This implies that $L_2$ is $G_{n_2}(\o_K)^\sigma$-invariant, in particular it is $w_{n_2}$-invariant, for 
$w_{n_2}$ the antidiagonal matrix with ones on the second diagonal. As $L_2$ is $P_{n_2}^\sigma$-invariant by hypothesis, it is 
$G_{n_2}^\sigma$-invariant because 
$w_{n_2}$ and $P_{n_2}^\sigma$ span the group $G_{n_2}^\sigma$, and this concludes the proof.
\end{proof}

For Speh representations, we first obtain the following criterion of $P_n^\sigma$-distinction.

\begin{prop}\label{GtoPspeh}
Let $r$ be a positive integer, $k$ be an integer $\geq 2$, and $n=kr$. Let $\D$ be a discrete series of $G_r$, 
then the representation $u(\D,k)$ is $P_n^\sigma$-distinguished if and only if $u(\D,k-1)$ is $\sigma$-distinguished.
\end{prop}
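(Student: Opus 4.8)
The key tool is Proposition \ref{highderspeh}, which tells us that the highest shifted derivative of $u(\D,k)$ is exactly $u(\D,k-1)$. Combining this with Proposition \ref{higherdistunitary} applied to the irreducible unitary representation $\pi=u(\D,k)$ of $G_n$ gives immediately that $u(\D,k)$ is $P_n^\sigma$-distinguished if and only if its highest shifted derivative $u(\D,k)^{[-]}=u(\D,k-1)$ is $\sigma$-distinguished. So the proof is essentially a one-line deduction: invoke Proposition \ref{higherdistunitary} and then substitute the value of the highest shifted derivative from Proposition \ref{highderspeh}.

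\textbf{Details to check.} The only thing that needs a small verification is that the hypotheses of Proposition \ref{higherdistunitary} genuinely apply, namely that $u(\D,k)$ is an irreducible unitary representation of $G_n$ with $n\geq 1$ (which holds by definition of a Speh representation, $\D$ being a discrete series), and that $k\geq 2$ ensures $u(\D,k-1)$ is the highest shifted derivative in the sense of a representation of a group $G_{n-h}$ with $h=r\geq 1$, so that the statement ``$\pi^{[-]}$ is $\sigma$-distinguished'' refers to $\sigma$-distinction of $u(\D,k-1)$ as a representation of $G_{n-r}=G_{(k-1)r}$, consistently with the notion used in Section \ref{sectiondist}. When $k=2$ and $r$ is such that $u(\D,1)=\D$ itself is a discrete series of $G_r$, this is still fine.

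\textbf{Main obstacle.} There is essentially no obstacle here; the statement is a formal consequence of the two cited propositions. The genuinely hard work was already done: it lies inside Proposition \ref{higherdistunitary} (which in turn rests on the convergence result Proposition \ref{convint}, on Kable's Lemma \ref{lmkable}, and on Bernstein's unitarisability criterion Proposition \ref{criterionbernstein}) and inside the derivative formula Proposition \ref{highderspeh}. The reason this intermediate proposition is isolated is presumably that the harder task, to be carried out afterwards, is to upgrade $P_n^\sigma$-distinction to $G_n^\sigma$-distinction for $u(\D,k)$ when $\D$ is $\sigma$-distinguished; Proposition \ref{GtoPspeh} will feed into that argument, likely together with Proposition \ref{crux}.
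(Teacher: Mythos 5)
Your proposal is correct and matches the paper's own proof exactly: cite Proposition \ref{highderspeh} to identify $u(\D,k)^{[-]}=u(\D,k-1)$ and then apply Proposition \ref{higherdistunitary}. Your hypothesis checks and your remark on the role of this proposition in the later argument via Proposition \ref{crux} are accurate.
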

\begin{proof}
We recall from Proposition \ref{highderspeh}, that $u(\D,k)^{[-]}$ is equal to $u(\D,k-1)$. We then apply Proposition \ref{higherdistunitary}.
\end{proof}

Proposition \ref{higherdistunitary} also has the following corollary.

\begin{cor}\label{uniformproddist}
Let $n_1,\dots, n_t$ and $k$ be positive integers, and $\D_i$ be a unitary segment of $G_{n_i}$ for each $i$. If the product $u(\D_1,k)\times \dots \times u(\D_t,k)$ is $\sigma$-distinguished, then the product $\D_1\times \dots \times \D_t$ is $\sigma$-distinguished as well.
\end{cor}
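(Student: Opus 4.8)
The plan is to argue by induction on $k$, peeling off one layer at a time with Proposition \ref{higherdistunitary}. The base case $k=1$ is vacuous, since $u(\D_i,1)=\D_i$ (the Langlands quotient attached to a single segment is that segment), so the hypothesis and the conclusion coincide.

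For the inductive step, fix $k\geq 2$, assume the statement for $k-1$, and set $\pi=u(\D_1,k)\times\dots\times u(\D_t,k)$, a representation of $G_n$ with $n=k(n_1+\dots+n_t)$. By Tadic's classification (Theorem \ref{tadicclassif}) $\pi$ is irreducible and unitary, so Proposition \ref{higherdistunitary} applies to it. Since $\pi$ is $\sigma$-distinguished, i.e. $G_n^\sigma$-distinguished, the nonzero invariant form is in particular $P_n^\sigma$-invariant, so $\pi$ is $P_n^\sigma$-distinguished; hence by Proposition \ref{higherdistunitary} its highest shifted derivative $\pi^{[-]}$ is $\sigma$-distinguished.

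It then remains to identify $\pi^{[-]}$. By Lemma \ref{highderproduct} the highest derivative of $\pi$ is $u(\D_1,k)^-\times\dots\times u(\D_t,k)^-$, and twisting by the character $\nu_K^{1/2}$ of $G_{n-h}$ commutes with the product (the restriction of $\nu_K^{1/2}$ to a standard Levi is the external tensor product of the characters $\nu_K^{1/2}$ on the blocks), so $\pi^{[-]}=u(\D_1,k)^{[-]}\times\dots\times u(\D_t,k)^{[-]}$. By Proposition \ref{highderspeh} we have $u(\D_i,k)^{[-]}=u(\D_i,k-1)$ for each $i$ (when $k=2$ this reads $u(\D_i,1)=\D_i$). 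Therefore $u(\D_1,k-1)\times\dots\times u(\D_t,k-1)$ is $\sigma$-distinguished, and the induction hypothesis gives that $\D_1\times\dots\times\D_t$ is $\sigma$-distinguished, completing the induction.

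There is no genuine obstacle here: the proof is essentially a bookkeeping argument chaining Proposition \ref{higherdistunitary}, Lemma \ref{highderproduct}, and Proposition \ref{highderspeh}. The only points needing a line of justification are the compatibility of the $\nu_K^{1/2}$-twist with normalised parabolic induction used to compute $\pi^{[-]}$ as a product of the $u(\D_i,k-1)$, and the remark that each intermediate product of Speh representations is again irreducible unitary (so that Proposition \ref{higherdistunitary} can be reapplied at every stage), both of which are immediate.
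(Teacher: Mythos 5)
Your proof is correct and follows essentially the same route as the paper: induct on $k$, use Proposition \ref{higherdistunitary} to pass to the highest shifted derivative, and identify that derivative as $u(\D_1,k-1)\times\dots\times u(\D_t,k-1)$ via Lemma \ref{highderproduct} and Proposition \ref{highderspeh}. The extra remarks you include (irreducibility and unitarity via Theorem \ref{tadicclassif}, compatibility of the $\nu_K^{1/2}$-twist with the product) are exactly the points the paper also relies on, so no changes are needed.
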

\begin{proof}
First, according to Theorem \ref{tadicclassif}, the product $u(\D_1,k)\times \dots \times u(\D_t,k)$ is unitary. According to Lemma \ref{highderproduct} and Proposition \ref{highderspeh}, the highest shifted derivative of this product is $u(\D_1,k-1)\times \dots \times u(\D_t,k-1)$. It is $\sigma$-distinguished 
according to Proposition \ref{higherdistunitary}, hence, by induction, the product $\D_1\times \dots \times \D_t$ is $\sigma$-distinguished as well.
\end{proof}

In particular, if $u(\D,k)$ is $\sigma$-distinguished, then $\D$ is $\sigma$-distinguished. We are 
now able to prove the main result of this section.

\begin{cor}\label{spehdist}
Let $k$ and $m$ be two positive integers, and $\D$ be a discrete series of $G_m$. The representation $u(\D,k)$ is $\sigma$-distinguished if and only if 
$\D$ is $\sigma$-distinguished.
\end{cor}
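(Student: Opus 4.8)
The plan is to prove the two directions separately, handling the harder ``if'' direction by induction on $k$.

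\medskip

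\textit{The easy direction.} Suppose $u(\D,k)$ is $\sigma$-distinguished. Then by Corollary \ref{uniformproddist} (with $t=1$), or equivalently by combining Proposition \ref{higherdistunitary} with the formula $u(\D,k)^{[-]}=u(\D,k-1)$ from Proposition \ref{highderspeh} and a descending induction on $k$, we immediately get that $\D=u(\D,1)$ is $\sigma$-distinguished. This is already recorded in the remark following Corollary \ref{uniformproddist}.

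\medskip

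\textit{The hard direction.} Now suppose $\D$ is $\sigma$-distinguished; we must show $u(\D,k)$ is $\sigma$-distinguished. I would argue by induction on $k$, the case $k=1$ being the hypothesis. Assume $u(\D,k-1)$ is $\sigma$-distinguished for some $k\geq 2$. Then $u(\D,k-1)$ is in particular $G_{m(k-1)}^\sigma$-distinguished, so by Proposition \ref{GtoPspeh} the representation $u(\D,k)$ is $P_{mk}^\sigma$-distinguished. It remains to promote this to $G_{mk}^\sigma$-distinction, and this is where the real work lies. The idea, following the ``simple observation'' described in the introduction for the case $\D=\rho$ cuspidal and $k=2$, is to use the end-of-complementary-series exact sequence of Theorem \ref{end}: the length-$2$ representation $\pi(\D,\tfrac12)$ (in the notation $\pi(u(\D,1),1/2)$, i.e. $k=1$ there) — more precisely, for general $k$, we use $\pi(u(\D,k-1),1/2)$, whose two irreducible subquotients by Theorem \ref{end} are $u(\D_-,k-1)\times u(\D_+,k-1)$ and $u(\D,k-2)\times u(\D,k)$. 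One of these is a submodule and the other a quotient; the exact structure (which is which) needs to be pinned down, but the induced representation $\nu_K^{1/2}u(\D,k-1)\times\nu_K^{-1/2}u(\D,k-1)$ is $\sigma$-distinguished because $u(\D,k-1)$ is (a product of a distinguished representation with the $\sigma$-twisted dual of its other factor — here the two factors are $\nu_K^{\pm1/2}$ twists of the same $\sigma$-self-dual $u(\D,k-1)$, so Theorem \ref{distgen}-type reasoning, or rather the basic fact that $\tau\times(\tau^\vee)^\sigma$ is always distinguished, applies since $u(\D,k-1)^\vee=u(\D,k-1)^\sigma$). Then one shows that the ``wrong'' subquotient $u(\D_-,k-1)\times u(\D_+,k-1)$ is \emph{not} $\sigma$-distinguished — using Corollary \ref{1sur2}, which says $\D$ and $\D_+$ cannot both be distinguished, hence (via Proposition \ref{discrdist}) the relevant twisted-distinction obstruction kicks in — so by the Hom long exact sequence applied to the short exact sequence, the distinction of the full induced representation must be ``carried'' by the other subquotient $u(\D,k-2)\times u(\D,k)$, forcing $u(\D,k-2)\times u(\D,k)$ to be $\sigma$-distinguished; and finally one applies Proposition \ref{crux} with $\pi_1=u(\D,k-2)$ (which is $G^\sigma$-distinguished by the induction hypothesis, for $k-2\geq 0$, the case $k=2$ using $u(\D,0)=\1_{G_0}$) and $\pi_2=u(\D,k)$ (which we already know is $P^\sigma$-distinguished) to conclude that $u(\D,k)$ is $G_{mk}^\sigma$-distinguished.

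\medskip

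\textit{Main obstacle.} The delicate point — and the step I would expect to consume most of the proof — is establishing that $u(\D_-,k-1)\times u(\D_+,k-1)$ is \emph{not} $\sigma$-distinguished, together with correctly tracking which subquotient sits as a submodule versus a quotient of $\pi(u(\D,k-1),1/2)$, since the direction of the exact sequence determines whether distinction of the total space passes to the sub or to the quotient. For the non-distinction of $u(\D_-,k-1)\times u(\D_+,k-1)$: if it were $\sigma$-distinguished, then (since it is a Speh-type product with all factors having $k$-part equal to $k-1$) Corollary \ref{uniformproddist} would force $\D_-\times\D_+$ to be $\sigma$-distinguished; one then needs a contradiction with $\D$ being $\sigma$-distinguished. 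By Proposition \ref{discrdist}, $\D=St(\rho,l)$ distinguished means $\rho$ is $(\sigma,\eta^{l-1})$-distinguished, while $\D_\pm=St(\rho,l\pm1)$ distinguished would require $\rho$ to be $(\sigma,\eta^{l})$- resp. $(\sigma,\eta^{l-2})=(\sigma,\eta^{l})$-distinguished; Corollary \ref{1sur2} / Corollary 1.6 of \cite{AKT04} says $\rho$ cannot be simultaneously $(\sigma,\eta^{l-1})$- and $(\sigma,\eta^{l})$-distinguished, giving the contradiction. (One must also rule out the degenerate cases $l=1$, where $\D_-=\1_{G_0}$, separately — there $\D_-\times\D_+=\D_+$ and $\D_+$ distinguished directly contradicts Corollary \ref{1sur2}.) Assembling these pieces carefully, and checking the low-$k$ base cases, is the bulk of the argument; the rest is a clean induction plus one invocation each of Propositions \ref{GtoPspeh} and \ref{crux}.
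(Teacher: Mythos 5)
Your proposal is correct and is essentially the paper's own argument, up to an index shift: the paper runs the induction by producing $u(\D,k+1)$ from $u(\D,k)$ and $u(\D,k-1)$ via $\pi(u(\D,k),1/2)$, whereas you produce $u(\D,k)$ from $u(\D,k-1)$ and $u(\D,k-2)$ via $\pi(u(\D,k-1),1/2)$; the ingredients are the same (Theorem \ref{end}, the distinction of $\tau\times(\tau^\vee)^\sigma$ -- which the paper takes from the main theorem of \cite{BD08} -- Corollary \ref{uniformproddist}, Proposition \ref{GtoPspeh} and Proposition \ref{crux}). Two remarks. First, the one step you leave implicit -- passing from $\sigma$-distinction of $\D_-\times\D_+$ to $\sigma$-distinction of the individual segments $\D_-$ and $\D_+$ -- is not automatic: it is supplied by Theorem \ref{distgen}, since $\D_-\times\D_+$ is an irreducible generic unitary representation in which each segment occurs with odd multiplicity; this is exactly how the paper reaches the contradiction with Corollary \ref{1sur2}, and with that citation your cuspidal-support computation via Proposition \ref{discrdist} (and your separate treatment of $l=1$) goes through. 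Second, the point you single out as the main obstacle -- deciding which of the two subquotients of the length-two representation is the submodule and which the quotient -- is immaterial: a nonzero invariant linear form on a length-two representation either restricts nontrivially to the irreducible submodule or factors through the irreducible quotient, so distinction of the whole forces distinction of at least one of the two subquotients, and the non-distinction of the $u(\D_-,\cdot)\times u(\D_+,\cdot)$ factor alone yields distinction of the other; this is also how the paper argues, implicitly.
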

\begin{proof}
If $u(\D,k)$ is $\sigma$-distinguished, we already noticed that $\D$ is $\sigma$-distinguished as a consequence of Corollary \ref{uniformproddist}. 
For the converse, we do an induction on $k$.
\par The case $k=1$ is clear, so let's suppose that $u(\D,l)$ is $\sigma$-distinguished for $l\leq k$, 
with $k\geq 1$.
We recall from Theorem \ref{end}, that $\nu^{1/2}u(\D,k)\times \nu^{-1/2}u(\D,k)$ is of length two, and has 
$u(\D_-,k)\times u(\D_+,k)$, and $u(\D,k-1)\times u(\D,k+1)$ as irreducible subquotients. Now, as $u(\D,k)^\vee=u(\D,k)^\sigma$, according 
to the main theorem of \cite{BD08}, 
 the representation $\nu^{1/2}u(\D,k)\times \nu^{-1/2}u(\D,k)$ is $\sigma$-distinguished. But $u(\D_-,k)\times u(\D_+,k)$ can't be distinguished, 
otherwise $\D_-\times \D_+$ would be distinguished thanks to Corollary \ref{uniformproddist}, and this would in turn imply that
both $\D_-$ and $\D_+$ according to Theorem 
 \ref{distgen}, which contradicts Corollary \ref{1sur2}. 
Hence, the representation $u(\D,k-1)\times u(\D,k+1)$ must be $\sigma$-distinguished. We recall that the representation $u(\D,k-1)$ is 
$\sigma$-distinguished 
by induction hypothesis. As $u(\D,k)$ is $\sigma$-distinguished by hypothesis as well, the representation $u(\D,k+1)$ is 
$P_{(k+1)m}^\sigma$-distinguished by Proposition \ref{GtoPspeh}. But then, the representation $u(\D,k+1)$ is $\sigma$-distinguished 
according to Proposition \ref{crux}, and this provides the induction step. 
\end{proof}

As a corollary, we obtain the following result.

\begin{cor}
Let $k$ and $m$ be positive integers. If $\D$ is a segment of $G_m$, and $u(\D,k)^\vee$ is isomorphic to 
$u(\D,k)^\sigma$, then $u(\D,k)$ is either $\sigma$-distinguished, or $(\sigma,\eta)$-distinguished, and not both at the same time.
\end{cor}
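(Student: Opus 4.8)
The plan is to reduce the statement to the discrete series case handled by Corollary \ref{1sur2}, using the fact (Corollary \ref{spehdist}) that $u(\D,k)$-distinction is equivalent to $\D$-distinction. First I would recall that if $u(\D,k)^\vee \simeq u(\D,k)^\sigma$, then since the map $St(\rho,l)\mapsto u(St(\rho,l),k)$ respects duals and the $\sigma$-action, and since Tadic's/Langlands' classification (Proposition \ref{lquotient}, together with the rigidity in Theorem \ref{tadicclassif}) lets one read off $\D$ from $u(\D,k)$, the relation $u(\D,k)^\vee\simeq u(\D,k)^\sigma$ forces $\D^\vee\simeq \D^\sigma$. So $\D$ is a $\sigma$-self-dual segment, say $\D=St(\rho,l)$ with $\rho^\vee\simeq\rho^\sigma$ twisted appropriately.

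Next I would invoke the known dichotomy for $\sigma$-self-dual discrete series: by Proposition \ref{discrdist}, $\D=St(\rho,l)$ is $\sigma$-distinguished if and only if $\rho$ is $(\sigma,\eta^{l-1})$-distinguished, while $\D$ is $(\sigma,\eta)$-distinguished if and only if $\D_+=St(\rho,l+1)$ is $\sigma$-distinguished, i.e.\ if and only if $\rho$ is $(\sigma,\eta^l)$-distinguished. Since $\rho$ is $\sigma$-self-dual, a theorem of Kable/Anandavardhanan--Kable--Tandon (the input to Corollary \ref{1sur2}) says $\rho$ is $(\sigma,\eta^j)$-distinguished for exactly one residue of $j$ modulo $2$; hence exactly one of "$\D$ is $\sigma$-distinguished" and "$\D$ is $(\sigma,\eta)$-distinguished" holds. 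For a $\sigma$-self-dual segment this is precisely the content of Corollary \ref{1sur2} applied to $\D$ and to $\D_-$ (noting $\D=(\D_-)_+$), so I would simply cite Corollary \ref{1sur2}.

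Finally I would transfer this dichotomy from $\D$ to $u(\D,k)$. For $\sigma$-distinction this is exactly Corollary \ref{spehdist}: $u(\D,k)$ is $\sigma$-distinguished iff $\D$ is. For $(\sigma,\eta)$-distinction one argues identically: $\eta$ extends to $\nu^0$-twisted character of $G_n^\sigma$ via the determinant, the derivative functors and Proposition \ref{higherdistunitary} go through verbatim with the trivial character replaced by $\eta\circ\det$ (the normalising characters are unchanged, and Lemma \ref{lmkable}, Proposition \ref{convint} all have $(\sigma,\eta)$-analogues since $\eta$ is trivial on the relevant unipotent and mirabolic pieces), and Proposition \ref{highderspeh} shows $u(\D,k)^{[-]}=u(\D,k-1)$; so by the same induction as in Corollary \ref{spehdist}, $u(\D,k)$ is $(\sigma,\eta)$-distinguished iff $\D$ is. Combining, $u(\D,k)$ is $\sigma$-distinguished or $(\sigma,\eta)$-distinguished according to which alternative holds for $\D$, and never both.

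The main obstacle is the $(\sigma,\eta)$-analogue of the Speh-representation argument: Corollary \ref{spehdist} is stated only for the trivial character, so one must check that the entire machinery of Section \ref{sectionspeh} (the convergence in Proposition \ref{convint}, Kable's Lemma \ref{lmkable}, the key Proposition \ref{crux}, and the induction in Corollary \ref{spehdist}) survives when $\1$ is replaced by $\eta\circ\det$ on $G^\sigma$. This is routine but not literally covered by the excerpt; alternatively, one can bypass it by twisting: replace $\rho$ by a twist making the relevant segment $\sigma$-distinguished and track how the twist interacts with $u(\D,k)$, but the cleanest exposition is to note that every result in Section \ref{sectionspeh} holds with $\1$ replaced by any character of $G^\sigma$ trivial on unipotents, in particular $\eta\circ\det$, and then apply Corollary \ref{1sur2}.
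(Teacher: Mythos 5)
Your proposal is correct and follows essentially the paper's route: the paper's own proof just observes that $u(\D,k)^\vee\simeq u(\D,k)^\sigma$ forces $\D^\vee\simeq\D^\sigma$ and then invokes Theorem 7 of \cite{K04} together with Corollary 1.6 of \cite{AKT04} for the $\sigma$-self-dual segment $\D$, leaving the transfer to $u(\D,k)$ (via Corollary \ref{spehdist} and a twist) implicit, whereas you spell this transfer out. Two caveats on your details. First, Corollary \ref{1sur2} by itself does not contain the ``either\dots or'' half of the dichotomy for $\D$: it only converts $\sigma$-distinction of $\D$ into $(\sigma,\eta)$-distinction of $\D_+$ and records the exclusivity; the existence statement is exactly Kable's Theorem 7 (equivalently, through Proposition \ref{discrdist}, the dichotomy for the $\sigma$-self-dual cuspidal $\rho$), which you do cite, so your reduction through $\rho$ works, but the claim that the dichotomy is ``precisely the content of Corollary \ref{1sur2}'' is inaccurate. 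Second, your preferred way of transferring $(\sigma,\eta)$-distinction from $\D$ to $u(\D,k)$ --- rerunning all of Section \ref{sectionspeh} with $\1$ replaced by $\eta\circ\det$ --- is heavier than you suggest: the induction proving Corollary \ref{spehdist} also relies on the main theorem of \cite{BD08} and on Theorem \ref{distgen}, and their $(\sigma,\eta)$-analogues are not settled merely by noting that $\eta\circ\det$ is trivial on unipotent and mirabolic pieces. The clean argument is the twisting you mention only as a bypass: choose a unitary character $\chi$ of $K^*$ with $\chi_{|F^*}=\eta$; then $u(\D,k)$ is $(\sigma,\eta)$-distinguished if and only if $\chi^{-1}u(\D,k)=u(\chi^{-1}\D,k)$ is $\sigma$-distinguished, and Corollary \ref{spehdist} applies verbatim to the discrete series $\chi^{-1}\D$ (similarly for the exclusivity). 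With that substitution your argument is complete and matches what the paper's citation of \cite{K04} and \cite{AKT04} is implicitly doing.
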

\begin{proof}
The representation $u(\D,k)^\vee$ is isomorphic to 
$u(\D,k)^\sigma$ if and only if $\D^\vee$ is isomorphic to $\D^\sigma$. The result is then a consequence of Theorem 7 of \cite{K04} and 
Corollary 1.6. of \cite{AKT04}.
\end{proof}

\subsection{The general case}\label{sectiongeneral}

First, we notice that the class of $\sigma$-induced unitary irreducible representations of $G_n$ is contained in the class of 
$\sigma$-distinguished representations.

\begin{prop}\label{sigmaisdist}
For $n \geq 1$, let $\pi$ be an irreducible unitary representation of $G_n$ which is $\sigma$-induced, then it is $\sigma$-distinguished. 
\end{prop}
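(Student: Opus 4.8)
The plan is to reduce the statement to the building blocks described in Remark \ref{onlyrem}, using that distinction is inherited through commutative products of the relevant shape. By that remark, a $\sigma$-induced $\pi$ is a commutative product of three types of pieces: (i) pairs $\nu_K^{\alpha}u(\D,k)\times ((\nu_K^{\alpha}u(\D,k))^\vee)^\sigma$ for $0\leq \alpha<1/2$; (ii) pairs $\pi(u(\D,k),\alpha)\times(\pi(u(\D,k),\alpha)^\vee)^\sigma$ for $\alpha\in(0,1/2)$; and (iii) single factors $u(\D',k')$ that are $\sigma$-self-dual and with $\D'$ $\sigma$-distinguished. Since parabolic induction from a Levi and the formation of invariant linear forms are compatible (the key mechanism being that a tensor product $L_1\otimes L_2$ of invariant forms on the inducing data, integrated over the appropriate quotient of $G_n^\sigma$, yields an invariant form on the induced representation, by the open-orbit argument already used in the proof of Proposition \ref{crux}), it suffices to prove each type of piece is $\sigma$-distinguished, provided one also checks that the product of $\sigma$-distinguished representations of this form is again $\sigma$-distinguished.

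First I would handle type (iii): a factor $u(\D',k')$ with $\D'$ $\sigma$-distinguished is $\sigma$-distinguished by Corollary \ref{spehdist}, which is exactly the main result of Section \ref{sectionspeh}. For types (i) and (ii), I would appeal to the theorem of \cite{BD08} already invoked in the proof of Corollary \ref{spehdist}: if $\tau$ is an irreducible representation of some $G_r$, then $\tau\times(\tau^\vee)^\sigma$ is $\sigma$-distinguished. Applying this with $\tau=\nu_K^{\alpha}u(\D,k)$ gives type (i), and with $\tau=\pi(u(\D,k),\alpha)$ gives type (ii). (One should note that $\pi(u(\D,k),\alpha)$ is indeed irreducible, which is part of Tadic's classification, Theorem \ref{tadicclassif}.) Finally, to assemble the full product I would argue that if $\pi=\pi'\times\pi''$ where $\pi'$ is $G^\sigma$-distinguished and $\pi''$ is $G^\sigma$-distinguished, then so is $\pi$: realize $\pi$ as induced from the lower parabolic with Levi block $\pi'\otimes\pi''$, the double coset of the opposite parabolic is open, and integrating $L'\otimes L''$ over $(P^-\cap G_n^\sigma)\backslash G_n^\sigma$ produces a nonzero invariant form — the modulus-character bookkeeping works out exactly as in the proof of Proposition \ref{crux}, only simpler since we work with $G_n^\sigma$ rather than $P_n^\sigma$ and the relevant double coset is literally open (cf. \cite{F91}).

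The main obstacle I anticipate is the bookkeeping needed to see that the product structure of a $\sigma$-induced representation really does decompose into the three listed types \emph{with all factors simultaneously of the product-compatible form}, and that the $\sigma$-self-duality hypothesis $\pi^\vee\cong\pi^\sigma$ is genuinely equivalent to such a decomposition; this is essentially the content of Remark \ref{onlyrem}, but one must be careful that the multiplicities match up (a $\sigma$-self-dual $u(\D,k)$ occurring with even multiplicity gets absorbed into a type (i) pair with $\alpha=0$, while odd multiplicity leaves one leftover factor requiring $\D$ to be $\sigma$-distinguished). The convergence/nonvanishing of the integral producing the invariant form on the product is routine here because we are in the tempered-times-Langlands situation of genuinely distinct building blocks, so I do not expect the analytic input of Section \ref{sectionasympt} to be needed for this direction — that subtlety was already absorbed into Corollary \ref{spehdist}.
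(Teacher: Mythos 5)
Your proposal is, in substance, the paper's own proof: decompose the $\sigma$-induced representation via Remark \ref{onlyrem}, handle the paired factors $\tau\times(\tau^\vee)^\sigma$ with the theorem of \cite{BD08}, the odd-multiplicity Speh factors with Corollary \ref{spehdist}, and conclude by the hereditary property of distinction under parabolic induction, which the paper simply quotes (Proposition 26 of \cite{F92}). The one point you should correct is your justification of that last step. The double coset $P^-G_n^\sigma$ is \emph{not} open in $G_n$: at the level of Lie algebras, $\mathfrak{p}^-\otimes_F K+\mathfrak{gl}_n(F)$ has $F$-codimension equal to $\dim N^-$ in $\mathfrak{gl}_n(K)$, and the $G_n^\sigma$-orbit of the base point in $P^-\backslash G_n$ is the set of $\sigma$-stable flags, i.e.\ the \emph{closed} orbit. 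The openness you are borrowing from the proof of Proposition \ref{crux} is specific to the mirabolic subgroup $P_n$ of $G_n(K)$, which contains the full unipotent radical of the standard parabolic, so that $P^-P_n$ is a big cell; nothing of the sort holds for $G_n^\sigma=GL_n(F)$. The integral $\int_{(P^-\cap G_n^\sigma)\backslash G_n^\sigma}\lambda(f(g))\,dg$ is nevertheless the right construction, but its convergence comes from the compactness of $P^-(F)\backslash GL_n(F)$ together with the modulus identity $\delta_{P^-(K)}^{1/2}|_{P^-(F)}=\delta_{P^-(F)}$ (a consequence of $|x|_K=|x|_F^2$), and nonvanishing is checked on an Iwahori-factorizable test function as in Proposition \ref{crux}. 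Since you also cite Flicker for this hereditary property, your proof stands, but the sentence asserting that the relevant coset is ``literally open'' is false as written and should be replaced by the closed-orbit/compactness argument or by the citation alone.
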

\begin{proof}
Let $\D$ be a discrete series of $G_m$ with $m\geq 1$, let $k$ be a positive integer, and let $\alpha$ be a real number. Then the representations 
$\nu_K^\alpha u(\D,k) \times ((\nu_K^\alpha u(\D,k))^\vee)^\sigma$ and $\pi(u(\D,k),\alpha)\times (\pi(u(\D,k),\alpha)^\vee)^\sigma$ are $\sigma$-distinguished according to the main theorem of \cite{BD08}. But as a product 
of $\sigma$-distinguished representations is $\sigma$-distinguished according to Proposition 26 of \cite{F92}, it now follows 
from Remark \ref{onlyrem} that if $\pi$ is $\sigma$-induced, then it is indeed $\sigma$-distinguished.
\end{proof}

It remains to prove the converse, to obtain the main result of this paper. First, we make the following obvious but useful observation.

\begin{LM}\label{dernier}
Let $\pi=u(\D_1,k_1)\times \dots \times u(\D_r,k_r)\times \pi(u(\D_{r+1},k_{r+1}),\alpha_{r+1}) \times \dots \times \pi(u(\D_t,k_t),\alpha_t)$ be an irreducible unitary representation of $G_n$, with $\D_i$ discrete series, and real numbers $\alpha_i$ in $(0,1/2)$. 
If the integers $k_i$ satisfy $k_i\geq 2$, then $\pi$ is $\sigma$-induced if and only if its highest shifted derivative $\pi^{[-]}$ is $\sigma$-induced.
\end{LM}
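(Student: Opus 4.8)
The plan is to compute the highest shifted derivative $\pi^{[-]}$ explicitly in Tadic form and then match the two defining conditions of being $\sigma$-induced on each side. First I would establish that, under the hypothesis $k_i\geq 2$ for all $i$,
$$\pi^{[-]}=u(\D_1,k_1-1)\times\dots\times u(\D_r,k_r-1)\times\pi(u(\D_{r+1},k_{r+1}-1),\alpha_{r+1})\times\dots\times\pi(u(\D_t,k_t-1),\alpha_t).$$
This rests on three facts: Lemma~\ref{highderproduct}, which gives the highest derivative of a product as the product of the highest derivatives; Proposition~\ref{highderspeh}, which gives $u(\D,k)^{[-]}=u(\D,k-1)$ for $k\geq 1$; and the elementary observation that the derivative functors commute with twists by powers of $\nu_K$ (since $\nu_K$ is trivial on each $U_j$ and restricts to $\nu_K$ on $G_{j-1}\subset G_j$), which together with the fact that $\nu_K^{1/2}$ distributes over normalised parabolic induction yields $(\nu_K^{\alpha}u(\D,k))^{[-]}=\nu_K^{\alpha}u(\D,k-1)$ and hence $\pi(u(\D,k),\alpha)^{[-]}=\pi(u(\D,k-1),\alpha)$. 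Each factor on the right-hand side is a legitimate building block of Tadic's classification (a Speh representation $u(\D_i,k_i-1)$ on a discrete series $\D_i$ with $k_i-1\geq 1$, or a representation $\pi(u(\D_j,k_j-1),\alpha_j)$ with $\alpha_j\in(0,1/2)$), and any product of such blocks is irreducible and unitary by Theorem~\ref{tadicclassif}; so the displayed product \emph{is} the (unique) Tadic decomposition of $\pi^{[-]}$.

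Next I would introduce the ``decrement'' operation $D$ on Tadic building blocks with $k\geq 2$, sending $u(\D,k)\mapsto u(\D,k-1)$ and $\pi(u(\D,k),\alpha)\mapsto\pi(u(\D,k-1),\alpha)$, so that $\pi^{[-]}=\prod_a D(\pi_a)$ when $\pi=\prod_a\pi_a$ is the Tadic decomposition. The two properties I need are that $D$ is injective on such blocks (the parameters of a block, and of its image, are determined by the block), and that $D$ commutes with $(\cdot)^\vee$ and with $(\cdot)^\sigma$, which follows from $u(\D,k)^\vee=u(\D^\vee,k)$, $u(\D,k)^\sigma=u(\D^\sigma,k)$ and the analogous identities for $\pi(u(\D,k),\alpha)$ — these in turn hold because $\vee$ and $\sigma$ preserve the class of discrete series and the real parts of central characters. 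By the uniqueness in Theorem~\ref{tadicclassif}, $\pi^\vee=\pi^\sigma$ is equivalent to the multiset equality $\{\pi_a^\vee\}_a=\{\pi_a^\sigma\}_a$; applying the injective, $\vee$- and $\sigma$-equivariant map $D$ factorwise, this holds if and only if $\{D(\pi_a)^\vee\}_a=\{D(\pi_a)^\sigma\}_a$, i.e.\ if and only if $(\pi^{[-]})^\vee=(\pi^{[-]})^\sigma$.

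Finally, injectivity of $D$ shows that the multiplicity of a Speh block $u(\D_i,k_i)$ in $\pi$ equals the multiplicity of $u(\D_i,k_i-1)$ in $\pi^{[-]}$, and the segment underlying $u(\D_i,k_i-1)$ is the same $\D_i$ as the one underlying $u(\D_i,k_i)$; hence the condition ``every Speh block occurring with odd multiplicity is built on a $\sigma$-distinguished segment'' holds for $\pi$ if and only if it holds for $\pi^{[-]}$. Combining this with the equivalence of the $\sigma$-self-duality conditions from the previous paragraph gives exactly the statement. The argument is essentially bookkeeping once $\pi^{[-]}$ is identified, so the only step that requires genuine care is the first one — checking that the highest shifted derivative is compatible with $\nu_K$-twists and with parabolic products, so that $\pi^{[-]}$ really is the Tadic product of the $u(\D_i,k_i-1)$ and the $\pi(u(\D_j,k_j-1),\alpha_j)$; after that everything is a direct application of the uniqueness part of Tadic's classification.
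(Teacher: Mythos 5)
Your proof is correct and follows essentially the same route as the paper: identify $\pi^{[-]}$ as the product of the decremented blocks via Lemma \ref{highderproduct} and Proposition \ref{highderspeh}, then compare the two defining conditions of being $\sigma$-induced term by term, using the uniqueness part of Theorem \ref{tadicclassif}. The only (harmless) difference is in the final bookkeeping: since the underlying segments $\D_i$ are unchanged under the decrement, your argument does not even need Corollary \ref{spehdist}, which the paper invokes at that point to pass between distinction of $u(\D,k)$ and of $u(\D,k-1)$.
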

\begin{proof}
With the notations of the statement, according to to Lemma \ref{highderproduct} and Proposition 
\ref{highderspeh}, the representation $\pi^{[-]}$ is equal to the product $$u(\D_1,k_1-1)\times \dots \times u(\D_r,k_r-1)\times \pi(u(\D_{r+1},k_{r+1}-1),\alpha_{r+1}) \times \dots \times \pi(u(\D_t,k_t-1),\alpha_t).$$ 
Now it is clear that $\pi$ is $\sigma$-self-dual if and only if $\pi^{[-]}$ is, and that a representation $u(\D,k)$ (with $\D$ unitary) occurs with 
odd multiplicity in $\pi$, if and only if $u(\D,k-1)$ occurs with odd multiplicity in $\pi^{[-]}$. 
The result now follows from the fact that a Speh representation $u(\D,k)$ with $k\geq 2$ is $\sigma$-distinguished if and only if $u(\D,k-1)$ is $\sigma$-distinguished thanks to Corollary \ref{spehdist}.
\end{proof}

\begin{thm}\label{unitdist}
If $\pi$ is an irreducible unitary representation of $G_n$, for $n\geq 1$, then $\pi$ is $\sigma$-distinguished if and only it is $\sigma$-induced.
\end{thm}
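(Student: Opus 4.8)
The plan is to prove both implications by induction on $n$, or more precisely on the quantity $\sum_i k_i$ appearing in Tadic's decomposition of $\pi$. One direction is already settled: if $\pi$ is $\sigma$-induced, then Proposition \ref{sigmaisdist} gives that $\pi$ is $\sigma$-distinguished. So the real content is the converse: if $\pi$ is $\sigma$-distinguished, then it is $\sigma$-induced.

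First I would dispose of the base of the induction. If all the $k_i$ equal $1$, then $\pi$ is a generic unitary representation, and the equivalence of $\sigma$-distinction and being $\sigma$-induced is exactly Theorem \ref{distgen}. So assume now that at least one $k_i$ is $\geq 2$, and write $\pi=\pi_a\times \pi_b$ where $\pi_a$ collects the factors $u(\D_i,k_i)$ and $\pi(u(\D_i,k_i),\alpha_i)$ with $k_i=1$ (a generic unitary representation), and $\pi_b$ collects those with $k_i\geq 2$. The key point is to reduce to the situation of Lemma \ref{dernier} by peeling off the generic part with Proposition \ref{crux}: since $\pi$ is $\sigma$-distinguished, hence $\sigma$-self-dual by Proposition \ref{autodualmult1}, and since $\sigma$-self-duality of $\pi$ forces $\sigma$-self-duality of $\pi_a$ and of $\pi_b$ separately (by the uniqueness in Proposition \ref{lquotient} / Theorem \ref{tadicclassif}), one can arrange the order of the product so that $\pi_a=\pi_1$ is $G^\sigma$-distinguished (Theorem \ref{distgen} applied to $\pi_a$, once one knows $\pi_a$ is $\sigma$-induced — but this itself requires care) and $\pi_b$ is at least $P^\sigma$-distinguished via Proposition \ref{higherdistunitary}, whence Proposition \ref{crux} upgrades $\pi_b$ to $G^\sigma$-distinguished. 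I expect one has to be somewhat delicate here: a priori one does not yet know $\pi_a$ is $\sigma$-induced, so the cleanest route is probably to take the highest shifted derivative of $\pi$ directly. By Lemma \ref{highderproduct} and Proposition \ref{highderspeh}, $\pi^{[-]}$ is the product of the $u(\D_i,k_i)$ with $k_i=1$ (contributing trivially, i.e. $\1_{G_0}$), the $u(\D_i,k_i-1)$, and $\pi(u(\D_i,k_i-1),\alpha_i)$ — so $\pi^{[-]}$ is an irreducible unitary representation of a smaller group with strictly smaller $\sum k_i$. By Proposition \ref{higherdistunitary}, $\pi^{[-]}$ is $\sigma$-distinguished, so by the induction hypothesis $\pi^{[-]}$ is $\sigma$-induced.

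It then remains to go back up: knowing $\pi^{[-]}$ is $\sigma$-induced, deduce that $\pi$ is $\sigma$-induced. This is precisely the content of Lemma \ref{dernier} when \emph{all} the $k_i$ are $\geq 2$, so the only gap is the interaction with the $k_i=1$ factors, which are annihilated by passing to the highest shifted derivative and hence invisible in $\pi^{[-]}$. To handle them I would again split $\pi=\pi_a\times\pi_b$ as above, use Proposition \ref{crux} (with $\pi_1=\pi_a$, $\pi_2=\pi_b$, after noting $\pi_a$ is generic unitary and $\sigma$-self-dual hence $\sigma$-distinguished by Theorem \ref{distgen} — wait, that needs $\pi_a$ $\sigma$-induced; a $\sigma$-self-dual \emph{generic} unitary representation is automatically $\sigma$-induced since it has no Speh factors with $k>1$, so Theorem \ref{distgen} does apply to $\pi_a$ once we know it is $\sigma$-self-dual) to conclude $\pi_b$ is $G^\sigma$-distinguished, then apply Lemma \ref{dernier} to $\pi_b$ together with the induction hypothesis on $\pi_b^{[-]}$ to get that $\pi_b$ is $\sigma$-induced, and finally observe that $\pi=\pi_a\times\pi_b$ with both factors $\sigma$-induced forces $\pi$ to be $\sigma$-induced (the $\sigma$-self-duality condition and the odd-multiplicity-of-distinguished-Speh condition are both visibly preserved under such products, as spelled out in Remark \ref{onlyrem}).

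The main obstacle I anticipate is the bookkeeping in the reduction step: ensuring that the decomposition $\pi=\pi_a\times\pi_b$ can be chosen compatibly with $\sigma$-self-duality so that $\pi_a$ and $\pi_b$ are \emph{individually} $\sigma$-self-dual, and verifying the hypotheses of Proposition \ref{crux} (namely that $\pi_b$ is $P^\sigma$-distinguished, which comes from Proposition \ref{higherdistunitary} plus the fact that its highest shifted derivative is $\sigma$-distinguished by the induction hypothesis, and that $\pi_a$ is $G^\sigma$-distinguished). Everything else is an assembly of Theorem \ref{distgen}, Corollary \ref{spehdist}, Lemma \ref{dernier}, Proposition \ref{crux}, and Proposition \ref{sigmaisdist}, which are all available.
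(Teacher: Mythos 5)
Your overall skeleton --- induction via the highest shifted derivative, the base case settled by Theorem \ref{distgen}, Lemma \ref{dernier} to climb back up on the non-generic part, and Proposition \ref{crux} to deal with the factors killed by the derivative --- is the same as the paper's, but there is a genuine gap in the way you deploy Proposition \ref{crux}, and it sits exactly at the crucial point. You want to apply \ref{crux} with $\pi_1=\pi_a$ (the generic part) assumed $G^\sigma$-distinguished, and you justify this by the claim that a $\sigma$-self-dual generic unitary representation is automatically $\sigma$-induced ``since it has no Speh factors with $k>1$''. That claim is false: in the definition of $\sigma$-induced, the odd-multiplicity condition bears on \emph{all} factors $u(\D_i,k_i)$, including those with $k_i=1$, i.e.\ on the discrete series factors themselves. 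A $\sigma$-self-dual discrete series which is $(\sigma,\eta)$-distinguished but not $\sigma$-distinguished (for instance $\D_+$ for $\D$ a $\sigma$-distinguished segment, by Corollary \ref{1sur2}) is a $\sigma$-self-dual generic unitary representation that is neither $\sigma$-induced nor $\sigma$-distinguished; if your claim were correct, Theorem \ref{distgen} would then assert that every $\sigma$-self-dual generic unitary representation is distinguished, which is wrong. So the hypothesis of \ref{crux} that $\pi_a$ is $G^\sigma$-distinguished is never established, and in fact the $\sigma$-distinction of the generic part is precisely what remains to be proved at that stage: your argument is circular there.

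The repair is to run Proposition \ref{crux} the other way around, which is what the paper does. Since the generic factors die under the highest shifted derivative, $\pi^{[-]}=\pi_b^{[-]}$; Proposition \ref{higherdistunitary} and your induction hypothesis give that $\pi_b^{[-]}$ is $\sigma$-induced, Lemma \ref{dernier} then gives that $\pi_b$ is $\sigma$-induced, hence $G^\sigma$-distinguished by Proposition \ref{sigmaisdist} --- no appeal to \ref{crux} is needed for $\pi_b$ at all. The generic part $\pi_a$ is $P^\sigma$-distinguished because its highest shifted derivative is $\1_{G_0}$ (Proposition \ref{higherdistunitary} again). Now apply Proposition \ref{crux} with $\pi_1=\pi_b$ ($G^\sigma$-distinguished) and $\pi_2=\pi_a$ ($P^\sigma$-distinguished) to conclude that $\pi_a$ is $G^\sigma$-distinguished, hence $\sigma$-induced by Theorem \ref{distgen}; then $\pi=\pi_a\times\pi_b$ is $\sigma$-induced, as you observe at the end. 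Your choice of inducting on $\sum_i k_i$ rather than on the size of the non-generic part is harmless; the reversal of the roles in \ref{crux}, patched over by the false claim about generic representations, is the real defect.
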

\begin{proof}
One direction is Proposition \ref{sigmaisdist}. Hence, it remains to show that when $\pi$ is $\sigma$-distinguished, it is $\sigma$-induced. 
To do this, we first write $\pi$ under the form $\pi_1\times \pi_2$ where $\pi_1$ is an irreducible unitary representation of $G_{n_1}$ for some $n_1\geq 0$, which is a product of the form described in the statement of 
Lemma \ref{dernier} (i.e. the $k_i$'s are $\geq 2$), and $\pi_2$ is generic unitary of $G_{n_2}$ for $n_2\geq 0$ (i.e. if you write it as a standard product in the Tadic's classification, all the $k_i$'s are equal to $1$). Notice that $\pi_1$ and $\pi_2$, hence $n_1$ and $n_2$ are uniquely determined by $\pi$. We now prove the statement by induction on $n_1$.\\
\indent The case $n_1=0$ is true thanks to Theorem \ref{distgen}. We thus suppose that $n_1$ is positive, in which case it is necessarily $\geq 2$ by definition of the 
representation $\pi_1$ (the integers $k_i$ occuring in its definition being $\geq 2$), and we suppose that the statement to prove is true for any irreducible unitary 
representation $\pi'=\pi_1'\times \pi_2'$, with $n_1'<n_1$. By hypothesis, the representation $\pi$ is $\sigma$-distinguished, hence the representation  
$\pi^{[-]}=\pi_1^{[-]}$ is $\sigma$-distinguished as well thanks to Proposition \ref{higherdistunitary}. Then, by induction hypothesis, the representation $\pi_1^{[-]}$ must be $\sigma$-induced (because if one writes $\pi'=\pi_1^{[-]}$ under the form $\pi_1'\times \pi_2'$, then we have $n_1'<n_1$). This implies that the representation $\pi_1$ is $\sigma$-induced as well according to Lemma \ref{dernier}, in particular it is $\sigma$-distinguished by Proposition \ref{sigmaisdist}. 
Then, we notice that the representation $\pi_2$ is $P_{n_2}^\sigma$-distinguished according to Proposition  \ref{higherdistunitary}, as 
$\pi_2^{[-]}$ is the trivial character of $G_0$. We can now apply Proposition \ref{crux}, and conclude that $\pi_2$ 
is $\sigma$-distinguished, thus $\sigma$-induced thanks to Theorem \ref{distgen}. This finally implies that 
$\pi$ is $\sigma$-induced as well.
\end{proof}

\end{document}